\documentclass[]{informs3noheader}

\OneAndAHalfSpacedXI
\usepackage{hyperref}
\usepackage[utf8]{inputenc}
\usepackage{graphicx}
\usepackage[numbers]{natbib}
\usepackage{booktabs,caption,fixltx2e,threeparttable}
\usepackage{amssymb}
\usepackage{subcaption}
\usepackage{float}
\usepackage{indentfirst}
\usepackage{textcomp}
\usepackage{mathtools}
\usepackage{tabularx}
\usepackage{amsmath}
\usepackage{amsfonts}
\usepackage{todonotes}
\usepackage{optidef}
\usepackage{bbm}
\usepackage{algpseudocode}
\usepackage[shortlabels]{enumitem}
\usepackage{algorithm}
\usepackage{algorithmicx}
\usepackage{algpseudocode}
\usepackage{bm}
\usepackage{comment}
\usepackage{multirow}

\usepackage{natbib}
\bibpunct[, ]{(}{)}{,}{a}{}{,}%
\def\bibfont{\small}%

\DeclareMathOperator{\A}{\calA}
\DeclareMathOperator{\B}{\calB}

\DeclareMathOperator{\rmp}{\texttt{RMP}}
\DeclareMathOperator{\crewsp}{\textsc{CrewSubproblem}}
\DeclareMathOperator{\firesp}{\textsc{FireSubproblem}}

\TheoremsNumberedThrough
\EquationsNumberedThrough

\allowdisplaybreaks

\MANUSCRIPTNO{}

\def\R{\mathbb{R}}
\def\Z{\mathbb{Z}}

\def\E{\mathbb{E}}

\def\calA{\mathcal{A}}
\def\calB{\mathcal{B}}

\def\calD{\mathcal{D}}
\def\calE{\mathcal{E}}
\def\calF{\mathcal{F}}
\def\calG{\mathcal{G}}
\def\calH{\mathcal{H}}

\def\calJ{\mathcal{J}}

\def\calL{\mathcal{L}}
\def\calM{\mathcal{M}}
\def\calN{\mathcal{N}}
\def\calO{\mathcal{O}}
\def\calP{\mathcal{P}}
\def\calQ{\mathcal{Q}}

\def\calS{\mathcal{S}}
\def\calT{\mathcal{T}}
\def\calU{\mathcal{U}}
\def\calV{\mathcal{V}}

\def\bX{\boldsymbol{X}}

\def\bi{\boldsymbol{i}}

\def\by{\boldsymbol{y}}
\def\bz{\boldsymbol{z}}

\def\bdelta{\boldsymbol{\delta}}

\def\bsigma{\boldsymbol{\sigma}}

\def\bpi{\boldsymbol{\pi}}

\def\brho{\boldsymbol{\rho}}

\definecolor{mygreen}    {RGB}{0,90,0}
\definecolor{myblue}     {RGB}{0,51,140}
\definecolor{myorange}   {RGB}{238,118,0}
\definecolor{myred}      {RGB}{126,0,0}
\definecolor{mygray}     {RGB}{100,100,105}
\definecolor{mygrayblue} {RGB}{0,128,128}
\definecolor{mygraygreen}{RGB}{128,128,0}
\definecolor{DarkPurple}     {RGB}{142, 36, 170}
\definecolor{LightPurple}    {RGB}{57, 130, 7}

\begin{document}

\ARTICLEAUTHORS{
    \AUTHOR{Léonard Boussioux}
    \AFF{Michael G. Foster School of Business and Paul G. Allen School of Computer Science \& Engineering,\\ University of Washington, Seattle, WA, \EMAIL{leobix@uw.edu}}
    \AUTHOR{Alexandre Jacquillat, Ryne Reger, Jacob Wachspress}
    \AFF{Operations Research Center and Sloan School of Management,\\ Massachusetts Institute of Technology, Cambridge, MA, \EMAIL{alexjacq@mit.edu}}
}

\RUNAUTHOR{Boussioux, Jacquillat, Reger, and Wachspress}

\RRHSecondLine{}
\LRHSecondLine{}

\RUNTITLE{Optimizing Wildfire Suppression}

\TITLE{Predictive and Prescriptive AI toward\\Optimizing Wildfire Suppression}

\ABSTRACT{Intense wildfire seasons require critical prioritization decisions to allocate scarce suppression resources over a dispersed geographical area. This paper develops a predictive and prescriptive approach to jointly optimize crew assignments and wildfire suppression. The problem features a discrete resource-allocation structure with endogenous wildfire demand and non-linear wildfire dynamics. We formulate an integer optimization model with crew assignments on a time-space-rest network, wildfire dynamics on a time-state network, and linking constraints between them. We develop a two-sided branch-and-price-and-cut algorithm based on: (i) a two-sided column generation scheme that generates fire suppression plans and crew routes iteratively; (ii) a new family of cuts exploiting the knapsack structure of the linking constraints; and (iii) novel branching rules to accommodate non-linear wildfire dynamics. We also propose a data-driven double machine learning approach to estimate wildfire spread as a function of covariate information and suppression efforts, mitigating observed confounding between historical crew assignments and wildfire growth. Extensive computational experiments show that the optimization algorithm scales to otherwise intractable real-world instances; and that the methodology can enhance suppression effectiveness in practice, resulting in significant reductions in area burned over a wildfire season and guiding resource sharing across wildfire jurisdictions.}

\KEYWORDS{Wildfires, integer optimization, branch-and-price-and-cut, machine learning, causal inference.}

\maketitle

\vspace{-12pt}
\section{Introduction}

Wildfires are a global threat to livelihoods and ecosystems. The 2025 Palisades and Eaton fires in Los Angeles County burned 37,469 acres, destroyed or damaged 18,315 structures, led to billions of dollars in economic losses, displaced thousands of people, and caused 31 civilian fatalities \citep{calfireEaton2025,calfirePalisades2025}. Intense wildfire seasons are routine occurrences in several countries, and climate change is projected to increase extreme wildfire activity by 14\% by 2030, 30\% by 2050, and 50\% by 2100 \citep{WMO}. At the same time, limited fire management resources require difficult prioritization decisions. As early as 2021, the Chief of the US Forest Service wrote \textit{``We are in a `triage mode' where our primary focus must be on fires that threaten communities and infrastructure''}.\footnote{https://forestpolicypub.com/2021/08/05/the-chiefs-wildland-fire-direction-letter/}

As part of this challenge, this paper develops a predictive and prescriptive approach to support wildfire suppression in periods of intense, synchronous wildfire activity using artificial intelligence (AI), machine learning, and optimization. Specifically, our problem takes as input a set of wildfires with their propagation dynamics and a set of firefighting crews with their rest requirements. It then jointly optimizes the intensity and timing of suppression efforts for each wildfire and a corresponding workplan for each crew. This problem features two main technical challenges:
\begin{itemize}
    \item[--] \textit{Modeling wildfire spread.} Wildfire propagation can be modeled from historical and environmental factors via physics-based and machine learning methods (Section~\ref{sec:literature}). However, our prescriptive approach requires counterfactuals to estimate the impact of suppression efforts on wildfire propagation. This estimation problem is complicated by the endogeneity of historical crew assignments and wildfire growth---more crews were assigned to high-risk fires. Such confounding may lead to the underestimation of the impact of crews on wildfire suppression.
    \item[--] \textit{A discrete and non-linear optimization structure.} The optimization problem combines triage and routing decisions to assign scarce suppression crews across synchronous and dispersed wildfires. Compared with traditional triage operations, geographic dispersion creates a trade-off between prioritizing high-risk fires versus easily accessible ones. Compared with traditional routing operations, the problem features endogenous, non-linear demand due to dependencies between crew assignments and fire spread. Thus, the problem can be cast as a non-linear triage problem with heterogeneous switching costs, or as a routing problem where ``customer demand'' depends (non-linearly) on allocation decisions. This novel structure motivates our optimization methodology to solve it at scale and support wildfire suppression in practice.
\end{itemize}

In response, this paper develops an integrated data-driven approach to wildfire suppression, using double machine learning and large-scale optimization. We have assembled a detailed dataset that combines information on wildfire spread, historical suppression efforts, and environmental factors from meteorological data, terrain characteristics, and AI foundation models. In the predictive task, we use these data to model wildfire spread as a function of crew assignments and covariate information. In the prescriptive task, we implement our optimization methodology in a data-driven setup that reflects historical allocation decisions within the US fire management system.

The first contribution is to formulate an integer optimization model of wildfire suppression and crew assignments. We consider a general-purpose wildfire spread model of the form $S_{g,t+1}=f_{gt}(S_{gt},x_{gt})$, which governs the evolution of the state of fire $g$ from $S_{gt}$ to $S_{g,t+1}$ (e.g., burned area) as a function of suppression efforts $x_{gt}$ (e.g., crew assignments) and of contextual information encapsulated in the function $f_{gt}$ (e.g., environmental conditions). Viewed as a dynamic program, the problem faces the curse of dimensionality due to exponential growth in the state space across multiple wildfires \citep{powell2007approximate}; viewed as a mixed-integer program, it is complicated by non-linear linking constraints to model endogenous wildfire dynamics. To recover a tractable formulation, we represent wildfire propagation in a time-state network and crew operations in a time-space-rest network that captures travel times and rest requirements---an important consideration in practice due to hazardous working conditions. We propose a two-sided set partitioning formulation with path-based variables that determine a suppression plan for each fire and a routing assignment for each crew, with linking constraints to ensure consistency between fire demand and crew supply. The model can encode a broad range of non-linear fire models and suppression objectives. However, it involves an exponential number of variables, thereby requiring column generation methods.

The second contribution is to develop an exact two-sided branch-and-price-and-cut algorithm that converges to an optimal solution in a finite number of iterations, and which can handle large-scale instances arising in practice. The methodology is broken down into three components:
\begin{enumerate}
    \item \textit{A two-sided column generation algorithm} to generate fire suppression plans and crew routes iteratively. At each iteration, a restricted master problem solves a linear relaxation with a subset of the fire plans and crew routes. Pricing problems seek a new suppression plan for each fire and a new route for each crew with negative reduced costs, or prove that none exists. The algorithm alleviates the curse of dimensionality by separating the combinatorial resource-allocation complexities in the master problem and the non-linear wildfire dynamics in each pricing problem. We also propose a dual stabilization strategy to circumvent degeneracy and accelerate convergence by exploiting a common structure of wildfire spread models.
    \item \textit{A new family of augmented GUB cuts.} We develop a cutting-plane algorithm to strengthen the master problem's linear relaxation. Exploiting the knapsack structure of the fire-crew linking constraints, we derive augmented generalized-upper-bound (A-GUB) cuts that tighten the GUB cover cuts from \cite{wolsey1990valid} in our setting. We devise a cut-generating linear program to solve the separation problem efficiently within the cutting planes procedure.
    \item \textit{A novel dual-aware maximum-variance branching scheme.} We embed the two-sided column generation into a branch-and-cut tree. The traditional most-fractional branching rule does not guarantee exactness due to the non-linear wildfire dynamics. Thus, we devise a new maximum-variance branching rule and an acceleration leveraging the dual prices of the fire-crew linking constraints. We prove that these branching rules ensure the convergence of the branch-and-price-and-cut algorithm to an optimal integer solution in a finite number of iterations.
\end{enumerate}

The third contribution is a data-driven approach to estimate wildfire spread as a function of fire information and crew assignments. Covariates include the fire state (area and one-period growth, in our model) along with features such as fire characteristics, meteorological conditions, local terrain, and AlphaEarth satellite embeddings; the treatment variable specifies the number of crews assigned to the fire on each day; and the outcome variable reflects the growth of the fire by the next day. We propose a double machine learning approach to mitigate the confounding between fire characteristics and historical crew assignments \citep{chernozhukov2018double}, resulting in a smooth, decreasing relationship between crew allocations and predicted fire growth. This predictive model yields a data-driven dynamical model for each fire, with thousands of fire states in each period.

The fourth contribution is to demonstrate the performance of the methodology to guide wildfire suppression in practice. Results show that the two-sided branch-and-price-and-cut algorithm can generate high-quality solutions in large-scale instances of the problem. The methodology significantly outperforms several benchmarks, accelerating convergence by an order of magnitude in medium instances and generating provably high-quality solutions in large and otherwise intractable instances. From a practical standpoint, results suggest that the optimized solution can double the effectiveness of suppression activities---measured in the decrease in area burned from a do-nothing baseline---as compared to practical benchmarks that allocate crews based on such proxies as fire areas and access distance. We also demonstrate the potential benefits of resource pooling across jurisdictions in intense wildfire seasons, enabled by our scalable optimization algorithm. Ultimately, the proposed methodology can support the allocation of critical resources and suppression activities across synchronous wildfires---a growing problem at the core of climate change adaptation.

\section{Literature Review}
\label{sec:literature}

\paragraph{Predicting wildfire dynamics.}
Extensive research has characterized wildfire behavior, spanning ignition risk, detection, monitoring, spread, and recovery \citep[see, e.g.,][]{balbi20073d,jain2020review,finney2021wildland,crowley2023towards,illarionova2025exploration}, as well as the impact of climate change on wildfire risk \citep{bowman2020vegetation,brown2023climate}. Wildfire dynamics are typically predicted with physics-based models \citep{simeoni2011physical,cruz2013uncertainty,hoffman2016evaluating}, machine learning \citep{radke2019firecast,khanmohammadi2022prediction,shmuel2023machine}, and hybrid approaches \citep{bottero2020physics,singh2024trending}. However, these models do not build counterfactuals to estimate the impact of crew allocations on wildfire spread---a critical input into our prescriptive approach. Instead, simulation is typically used to guide wildfire suppression \citep{finney1997use,riley2018model,cigal2022sensitivity,bertsch2026calibrating}, land use \citep{ribeiro2023promoting}, evacuation \citep{siam2022interdisciplinary}, etc. \cite{kreider2024fire} show that suppression efforts can have unintended consequences on global burning patterns. \cite{marshall2022suppression} develop a random forest model to identify the drivers of fire containment.

This paper contributes a predictive model of wildfire spread that accounts for suppression intensity. We assemble a new dataset combining governmental records, weather, terrain, and satellite imagery embeddings. The problem falls into causal machine learning to estimate the (non-linear) relationships between high-dimensional covariates, treatment variables and outcome variables \citep{wager2018estimation,athey2019estimating,kaddour2022causal}. Specifically, we leverage the double machine learning approach from \cite{chernozhukov2018double,chernozhukov2022automatic} to isolate the outcome and treatment variables from the covariates in the treatment effect estimation.

\paragraph{Optimizing wildfire suppression.}

One branch of the literature optimizes preparedness through resource positioning \citep{sakellariou2020spatial,zeferino2020optimizing}, delineations for fire containment \citep{wei2018spatial,wei2021comparing}, resource sharing \citep{wei2016simulation}, fuel treatment \citep{bhuiyan2019stochastic}, and area monitoring \citep{puechwfdronebench}. Related work optimizes power systems under wildfire risk \citep{chen2026large,brun2026modeling}. Another branch optimizes suppression operations for a single fire or within a small locale \citep{donovan2003integer,hu2009integrated,wei2011toward,rodriguez2018integer,belval2019modeling,rodriguez2023application}.

Our paper tackles the intermediate tactical problem of allocating crews across fires over a multi-day horizon. \cite{pappis2010scheduling2} propose a job-scheduling model with deteriorating processing times. \cite{yang2019emergency} frame a vehicle routing problem trading off travel times and early fire suppression. \cite{wu2019resource} and \cite{wang2020multi} formulate mixed-integer linear programs. \cite{chan2021fighting} incorporate reconnaissance operations. \cite{farhadi2021traveling} study a traveling firefighter problem, a variant of the Traveling Salesperson Problem that minimizes the $\ell_2$-norm of visit times. \cite{nguyen2024optimization} optimize a fuel break network to maximize containment. \cite{suarez2024integrated} consider a two-stage stochastic program for preparedness and suppression.

To our knowledge, our paper is the first to jointly optimize crew assignments and multi-fire suppression plans under endogenous fire demand, to formulate a general-purpose model with any (non-linear) dynamical model of fire spread, and to develop an exact methodology that scales to large practical problems. The problem combines elements of triage \citep{chan2013prioritizing,sun2018patient,saghafian2018workload} and vehicle routing with time windows \citep{bard2002branch,baldacci2011new}. The flexibility to suppress each fire with one or multiple crews relates to routing with split deliveries \citep{desaulniers2010branch} or workload requirements \citep{jacquillat2024value}. The main distinction in our problem lies in interdependencies between suppression decisions and fire demand.

Methodologically, this structure falls into discrete optimization over non-linear dynamical systems, such as vaccine allocation \citep{jacquillat2024branch} and pollution management \citep{den2025optimizing}. Our two-sided set partitioning formulation relates to similar structures in airline management \citep{cordeau2001benders}, vehicle routing with consistency requirements \citep{wang2022routing}, and maritime transportation \citep{wu2022vessel}, which have been solved by Benders decomposition and column generation. This paper contributes a new branch-and-price-and-cut algorithm---with new valid inequalities and branching rules---that exploits the two-sided set partitioning structure with linking constraints between fire demand and crew supply.

\section{Data}
\label{sec:data}

\subsubsection*{Wildfire data and environmental conditions.}
We make use of the following databases:
\begin{itemize}
    \item[--] The \textit{SIT-209} database serves as the primary reporting platform for US wildfire management agencies.\footnote{Available at \url{https://www.wildfire.gov/application/sit209}.} The Situation Report (SIT) component collects daily fire activity and resource allocation at the local dispatch level. The Incident Status Summary (ICS-209) component captures comprehensive incident summaries for significant wildfires, thus providing visibility into historical wildfire activity (e.g., geospatial coordinates, burned area, fire cause, fire behavior, fuel types, containment status), and suppression activity (e.g., personnel deployment, equipment).
    \item[--] The \textit{ERA5} dataset stores re-analysis data from weather prediction models and observational data through data assimilation \citep{EAR5}. It records estimates of climate variables such as wind, temperature, humidity, precipitation, solar radiation, moisture, vegetation, etc.
    \item[--] The \textit{LandFire} dataset provides detailed information on terrain, vegetation, and fuels \citep{LandFire}. It integrates satellite imagery, field data, and ecological modeling to produce standardized, high-resolution layers used for land management and ecological research.
    \item[--] The \textit{AlphaEarth satellite embeddings} are geospatial machine learning representations derived from satellite imagery by Google DeepMind \citep{brown2025alphaearth}. These encode spatial, temporal, and spectral patterns of the Earth’s surface into compact vector representations.
\end{itemize}

\subsubsection*{Data preprocessing and temporal alignment.}
We use a daily unit of analysis, which is consistent with both the level of granularity in the data and the scope of the optimization problem---namely, tactical crew allocations rather than real-time management of firefighting resources. We conducted extensive pre-processing to ensure data consistency and data quality, including:
\begin{itemize}
    \item[--] temporal interpolation to alleviate variations in reporting frequencies, with boundary conditions to handle the first and last observations for each fire.
    \item[--] identification and elimination of duplicate same-day reports.
    \item[--] handling of ambiguous zero-growth entries. To focus on reliable fire behavior, we removed zero-growth entries, truncated each fire sequence after its first non-positive growth step, and removed sequences of 7 days or more or with fewer than 2 usable observations.
    \item[--] removal of anomalous data entries with physically implausible fire behavior, as well as entries exceeding the 85th percentile for daily growth (4,000 acres/day). This filter focuses on the common operational regime of actively growing fires, while limiting upper-tail events and reporting artifacts. Our empirical model should be interpreted within this active-fire context that drives most day-to-day dispatch decisions, as opposed to a model of tail fire behavior.
\end{itemize}

The resulting panel contains 8,336 fire-day observations over 2015–2024 (AlphaEarth features only cover 2017-2024, and we omit 2019 entirely in the analysis due to inconsistencies). We track the current area (\texttt{area}), backward-looking momentum (\texttt{area$\_$diff}, change since the previous report) and forward-looking growth (\texttt{next$\_$area}, change until the next report). In the predictive model, the latter serves as the outcome variable and the former two as features; in the prescriptive model, the former two constitute a two-dimensional state variable. The mean daily growth rate is 683 acres/day, with a median of 258 acres/day and a standard deviation of 913 acres/day. Figure~\ref{fig:wildfire_dist} shows that the distributions of daily growth, burned area and personnel deployment are heavily right-skewed, indicating significant variability in wildfire behavior and suppression efforts.

\begin{figure}[h!]
    \centering
    \subfloat[Daily burned area]{\label{subfig:daily_growth}
        \includegraphics[width=0.3\textwidth]{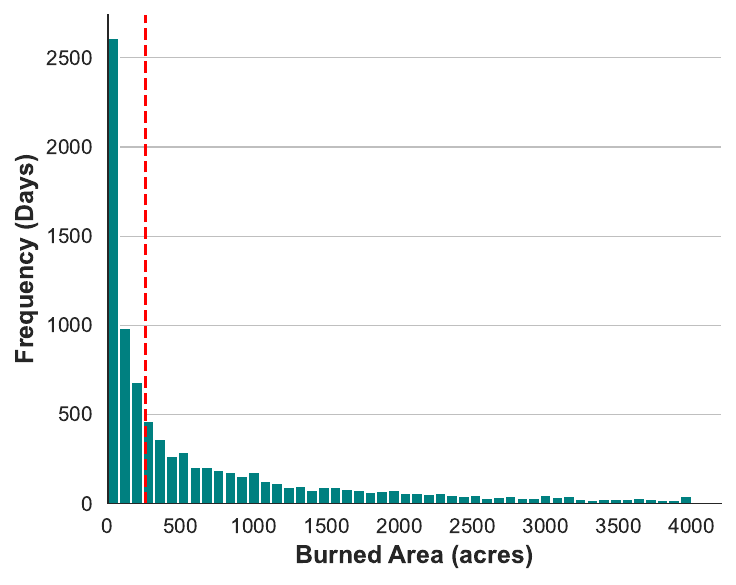}
    }
    \hfill 
    \subfloat[Total area burned]{\label{subfig:area_burned}
        \includegraphics[width=0.3\textwidth]{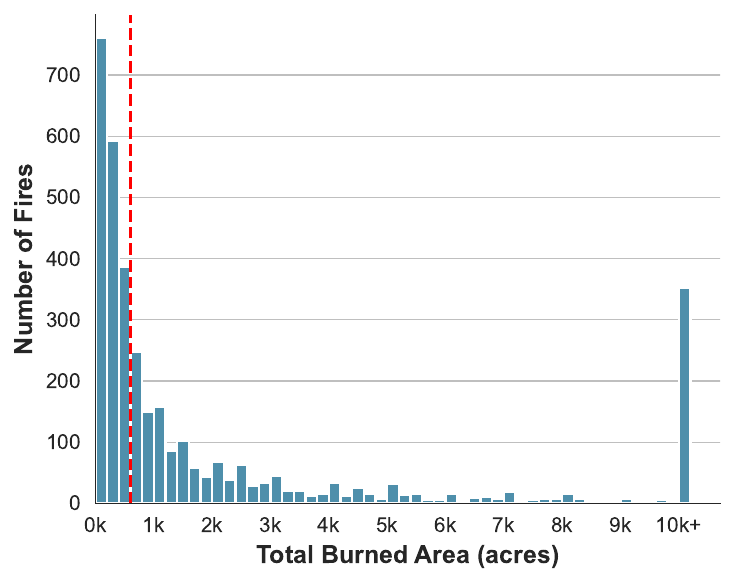}
    }
    \hfill 
    \subfloat[Peak personnel]{\label{subfig:peak_personnel}
        \includegraphics[width=0.3\textwidth]{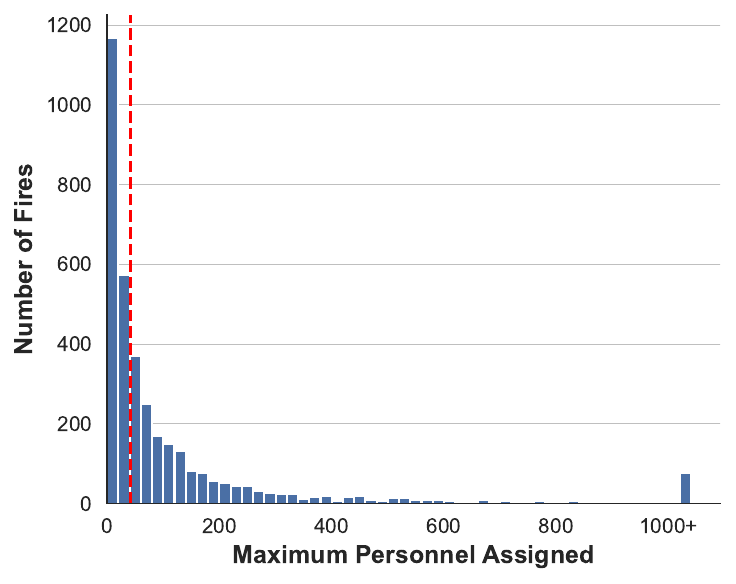}
    }
    \caption{Distribution of key wildfire characteristics from 2015-2024 (red vertical lines indicate averages).}
    \label{fig:wildfire_dist}
    \vspace{-18pt}
\end{figure}

\subsubsection*{Crew data and experimental setup.}

Throughout this paper, we focus on Interagency Hotshot Crews (IHCs), which are specialized teams of approximately 18--25 firefighters with the highest level of training and qualification \citep{ihc}. The SIT-209 data report historical allocations of IHCs to wildfires, as well as allocations of other resources (e.g., other crews, vehicles, equipment). We define an ``IHC equivalent'' variable that approximates multi-dimensional resource allocations to ensure consistency between the decision variable in our optimization problem and the treatment variable in our prediction problem. Specifically, let $P$ denote total incident personnel for a given fire on a given day, obtained from the data; we define the IHC-equivalent variable as $P_{it}/\widehat\kappa$, where $\widehat\kappa$ is the estimated equivalent of an IHC crew (\(\widehat\kappa=63\), as described in~\ref{app:personnel_scaling}). Our optimization model will determine the number $x$ of IHC-equivalent crews (referred to as ``crews’’ thereafter), which corresponds to a personnel level of $\widehat\kappa x$ in the data.

We utilize data on the location coordinates of IHC bases and IHC rosters from the USDA Forest Service. Wildfires in the contiguous United States are managed regionally by nine Geographic Area Coordination Centers (GACCs), with up to 27 IHCs per GACC. When GACC resources are strained, the National Interagency Coordination Center (NICC) can mobilize resources across GACCs \citep{thompson_how_2022}. We therefore define instances with up to 30 IHCs to guide intra-GACC resource allocation and instances with over 100 IHCs to support inter-GACC coordination.

Using wildfire and IHC data, we build an experimental setup that replicates historical suppression challenges. Figure~\ref{fig:fire_ihc_map} illustrates the geographic distribution of the 3,578 wildfires in our data and the 112 locations of IHC bases. As expected, IHC bases are spread across the contiguous United States, with a higher concentration in the Western States that are prone to intense wildfire activity.

\begin{figure}[h!]
    \centering
    \includegraphics[width=.8\textwidth]{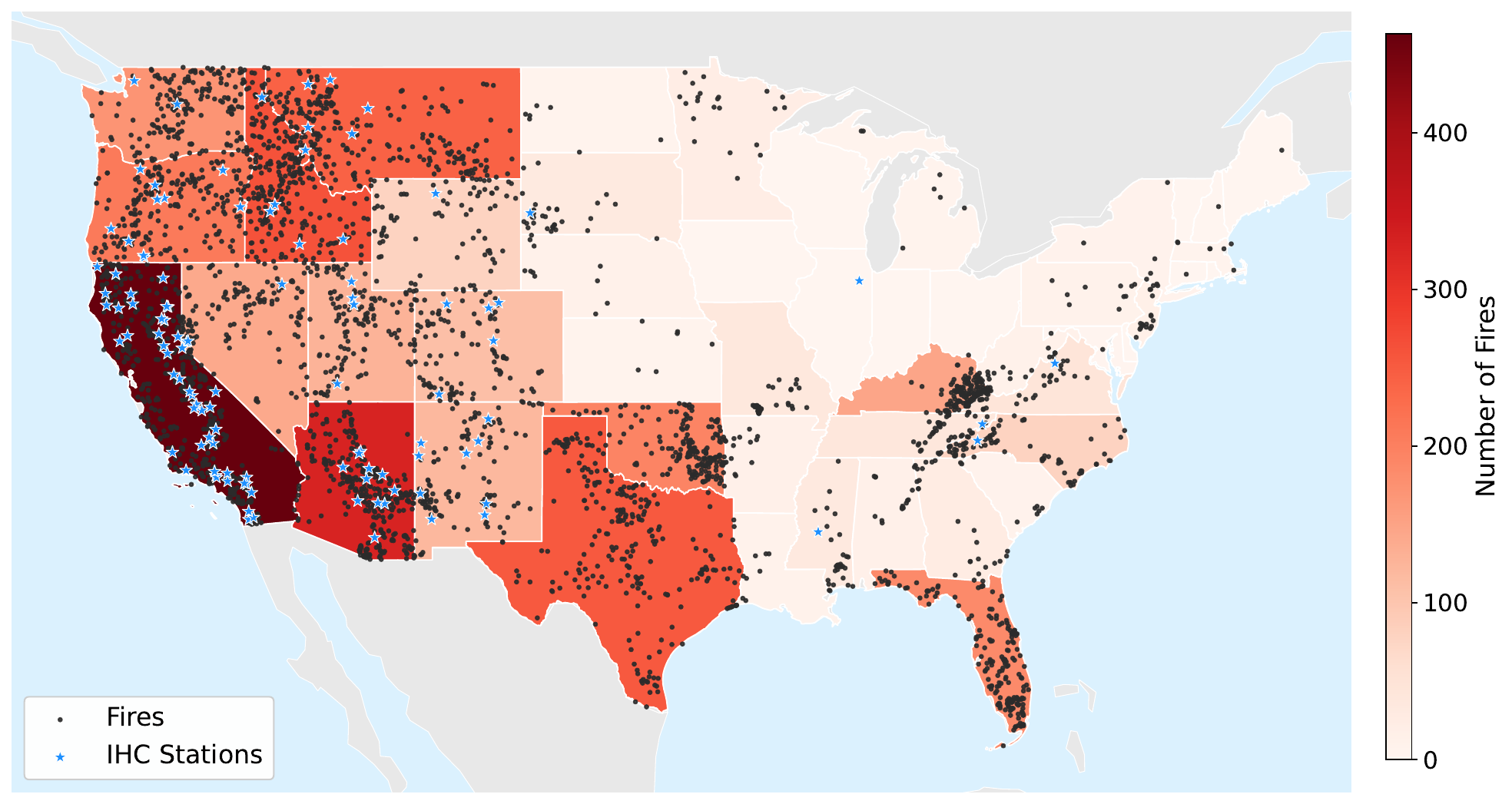}
    \caption{Geographic distribution of wildfires (2015-2024) and IHC stations across the contiguous United States.}
    \label{fig:fire_ihc_map}
\end{figure}

\section{Optimization Model Formulation}
\label{sec:model}

\subsection{Problem Statement and Structure}
\label{subsec:statement}

The problem optimizes wildfire suppression over a discretized time horizon starting in period 1, evolving through $T$ periods indexed by $t\in\calT=\{1,\cdots,T\}$, and ending in a sink period $T+1$. It takes as inputs a set $\calG$ of $G$ fires and a set $\calJ$ of $J$ IHC crews. Each crew $j\in\calJ$ is characterized by its initial location $\ell^1_j$ and the set $\calL_j$ of locations it can visit, including its home base $b_j$ and the set of fires $\calG_j$ within its jurisdiction. We denote by $tt(\ell,\ell')$ and $tc(\ell,\ell')$ the travel time and travel cost between $\ell\in\calL_j$ and $\ell'\in\calL_j$. We assume that each crew must rest every $\varphi$ periods for $\gamma$ periods.

The next input is a deterministic fire spread model. Each wildfire $g\in\calG$ constitutes a dynamical system, governed by a discrete-time state variable $S_{gt}\in\calS_g$, an initial state $S^{1}_g$, a decision $x_{gt}$, a transition function $S_{g,t+1}=f_{gt}(S_{gt},x_{gt})$, a discrete-time cost function $d_{gt}(S_{gt},x_{gt})$, and a terminal cost function $h_g(S_{g,T+1})$. The problem optimizes the assignment of each crew $j\in\calJ$ to each fire $g\in\calG$ in each period $t\in\calT$. These decisions will determine the variables $x_{gt}$ in the fire spread model.

By design, we impose no restriction on the transition and cost functions to capture generic (non-linear) wildfire models. Similarly, we consider a generic bi-objective formulation that can model fire damage---accrued through the cost functions $d_{gt}(\cdot,\cdot)$ and $h_g(\cdot)$---and travel costs, crew fatigue and operational complexity---accrued through the cost function $tc(\cdot,\cdot)$. In our implementation, we define a wildfire model with a two-dimensional state variable representing area burned (\texttt{area}) and momentum (\texttt{area$\_$diff}); the decision variable represents the number of crews assigned to fire $g$ at time $t\in\calT$; and the transition functions $f_{gt}(\cdot,\cdot)$ are learned from data in Section~\ref{sec:causalML}. We define a cost function $h_g(\cdot)$ that captures the total area burned by the end of the time horizon. Our implementation does not penalize intermediate fire states, (i.e., $d_{gt}(\cdot, \cdot) = 0$ for all $g \in \calG$ and $t \in \calT$) and does not include explicit crew travel costs (i.e., $tc(\cdot, \cdot) = 0$). Nevertheless, travel times still play a critical role to balance high-risk versus easily-accessible wildfires, trading off the triage vs. routing goals in the problem. The model can easily be adjusted to incorporate alternative wildfire propagation models as well as different suppression and prioritization objectives (e.g., impact on crew fatigue, local communities, critical infrastructure, ecologically sensitive habitat).

Altogether, the problem features a routing-scheduling structure with endogenous and non-linear wildfire demand as a function of crew assignments. The problem induces a mixed-integer non-linear formulation with ``big-M'' linking constraints coupling crew assignments to suppression operations. As a dynamic program, the problem involves an exponential state space in the number of fires and time periods, in $\calO(J^{GT})$, and an exponential decision space in the number of crews, fires and time periods, in $\calO(2^{JGT})$. Instead, we propose a time-expanded network representation and a linear set partitioning formulation, along with a branch-and-price-and-cut decomposition.

We highlight two limitations in our setup. First, the problem treats upstream strategic decisions (e.g., crew rostering, positioning) as given, and ignores downstream operational decisions (e.g., how to suppress each fire). In between, the model optimizes tactical response across widespread wildfires over a multi-day horizon. Second, due to the complexities of both the predictive and prescripive problems, we consider deterministic fire progression and ignition. In practice, the methodology is intended to operate in a rolling horizon, by re-solving the model as new information becomes available on ignitions, fire spread, and environmental conditions. As such, the model can support repeated short-term reallocations rather than a single irrevocable plan over the full horizon.

\subsection{Time-expanded network representation}
\label{subsec:network}

To capture the interdependencies between crew supply and fire demand, we represent crew assignments on time-space-rest networks and wildfire suppression operations on time-state networks.

\begin{figure}[h!]
    \centering

    \begin{tikzpicture}[scale=0.5,transform shape,
           roundnode/.style={circle,fill,inner sep=1.5pt}]
            \definecolor{myred}{rgb} {0.618, 0.133, 0.208}
            \definecolor{myblue}{rgb} {0.133, 0.208, 0.618}
            \definecolor{mygreen}{rgb} {0.133, 0.618, 0.208}
            \definecolor{mybrown}{rgb}{0.55, 0.27, 0.07}

            \node[roundnode]        (100)       [] {};
            \node[roundnode]        (110)       [below=.6cm of 100] {};
            \node[roundnode]        (120)       [below=.6cm of 110] {};
            \node[roundnode]        (130)       [below=.6cm of 120] {};
            \node[roundnode]        (140)       [below=.6cm of 130] {};
            \node[roundnode]        (150)       [below=.6cm of 140] {};
            \node[roundnode]        (160)       [below=.6cm of 150] {};
            \node[roundnode]        (170)       [below=.6cm of 160] {};
            \node[roundnode]        (180)       [below=.6cm of 170] {};
            \node[roundnode]        (190)       [below=.6cm of 180] {};
            \node[roundnode]        (1100)       [below=.6cm of 190] {};

            \node[roundnode]        (200)       [right=1cm of 100] {};
            \node[roundnode]        (210)       [below=.6cm of 200] {};
            \node[roundnode]        (220)       [below=.6cm of 210] {};
            \node[roundnode]        (230)       [below=.6cm of 220] {};
            \node[roundnode]        (240)       [below=.6cm of 230] {};
            \node[roundnode]        (250)       [below=.6cm of 240] {};
            \node[roundnode]        (260)       [below=.6cm of 250] {};
            \node[roundnode]        (270)       [below=.6cm of 260] {};
            \node[roundnode]        (280)       [below=.6cm of 270] {};
            \node[roundnode]        (290)       [below=.6cm of 280] {};
            \node[roundnode]        (2100)       [below=.6cm of 290] {};

            \node[roundnode]        (300)       [right=1cm of 200] {};
            \node[roundnode]        (310)       [below=.6cm of 300] {};
            \node[roundnode]        (320)       [below=.6cm of 310] {};
            \node[roundnode]        (330)       [below=.6cm of 320] {};
            \node[roundnode]        (340)       [below=.6cm of 330] {};
            \node[roundnode]        (350)       [below=.6cm of 340] {};
            \node[roundnode]        (360)       [below=.6cm of 350] {};
            \node[roundnode]        (370)       [below=.6cm of 360] {};
            \node[roundnode]        (380)       [below=.6cm of 370] {};
            \node[roundnode]        (390)       [below=.6cm of 380] {};
            \node[roundnode]        (3100)       [below=.6cm of 390] {};

            \node[roundnode]        (400)       [right=1cm of 300] {};
            \node[roundnode]        (410)       [below=.6cm of 400] {};
            \node[roundnode]        (420)       [below=.6cm of 410] {};
            \node[roundnode]        (430)       [below=.6cm of 420] {};
            \node[roundnode]        (440)       [below=.6cm of 430] {};
            \node[roundnode]        (450)       [below=.6cm of 440] {};
            \node[roundnode]        (460)       [below=.6cm of 450] {};
            \node[roundnode]        (470)       [below=.6cm of 460] {};
            \node[roundnode]        (480)       [below=.6cm of 470] {};
            \node[roundnode]        (490)       [below=.6cm of 480] {};
            \node[roundnode]        (4100)       [below=.6cm of 490] {};

            \node[roundnode]        (101)       [right=2cm of 400] {};
            \node[roundnode]        (111)       [below=.6cm of 101] {};
            \node[roundnode]        (121)       [below=.6cm of 111] {};
            \node[roundnode]        (131)       [below=.6cm of 121] {};
            \node[roundnode]        (141)       [below=.6cm of 131] {};
            \node[roundnode]        (151)       [below=.6cm of 141] {};
            \node[roundnode]        (161)       [below=.6cm of 151] {};
            \node[roundnode]        (171)       [below=.6cm of 161] {};
            \node[roundnode]        (181)       [below=.6cm of 171] {};
            \node[roundnode]        (191)       [below=.6cm of 181] {};
            \node[roundnode]        (1101)       [below=.6cm of 191] {};

            \node[roundnode]        (201)       [right=1cm of 101] {};
            \node[roundnode]        (211)       [below=.6cm of 201] {};
            \node[roundnode]        (221)       [below=.6cm of 211] {};
            \node[roundnode]        (231)       [below=.6cm of 221] {};
            \node[roundnode]        (241)       [below=.6cm of 231] {};
            \node[roundnode]        (251)       [below=.6cm of 241] {};
            \node[roundnode]        (261)       [below=.6cm of 251] {};
            \node[roundnode]        (271)       [below=.6cm of 261] {};
            \node[roundnode]        (281)       [below=.6cm of 271] {};
            \node[roundnode]        (291)       [below=.6cm of 281] {};
            \node[roundnode]        (2101)       [below=.6cm of 291] {};

            \node[roundnode]        (301)       [right=1cm of 201] {};
            \node[roundnode]        (311)       [below=.6cm of 301] {};
            \node[roundnode]        (321)       [below=.6cm of 311] {};
            \node[roundnode]        (331)       [below=.6cm of 321] {};
            \node[roundnode]        (341)       [below=.6cm of 331] {};
            \node[roundnode]        (351)       [below=.6cm of 341] {};
            \node[roundnode]        (361)       [below=.6cm of 351] {};
            \node[roundnode]        (371)       [below=.6cm of 361] {};
            \node[roundnode]        (381)       [below=.6cm of 371] {};
            \node[roundnode]        (391)       [below=.6cm of 381] {};
            \node[roundnode]        (3101)       [below=.6cm of 391] {};

            \node[roundnode]        (401)       [right=1cm of 301] {};
            \node[roundnode]        (411)       [below=.6cm of 401] {};
            \node[roundnode]        (421)       [below=.6cm of 411] {};
            \node[roundnode]        (431)       [below=.6cm of 421] {};
            \node[roundnode]        (441)       [below=.6cm of 431] {};
            \node[roundnode]        (451)       [below=.6cm of 441] {};
            \node[roundnode]        (461)       [below=.6cm of 451] {};
            \node[roundnode]        (471)       [below=.6cm of 461] {};
            \node[roundnode]        (481)       [below=.6cm of 471] {};
            \node[roundnode]        (491)       [below=.6cm of 481] {};
            \node[roundnode]        (4101)       [below=.6cm of 491] {};

            \node[yshift=1cm] at (100) {Fire 1};
            \node[yshift=1cm] at (200) {Fire 2};
            \node[yshift=1cm] at (300) {Fire 3};
            \node[yshift=1cm] at (400) {Base};

            \node[yshift=1cm] at (201) {Fire 1};
            \node[yshift=1cm] at (301) {Fire 2};
            \node[yshift=1cm] at (401) {Fire 3};
            \node[yshift=1cm] at (101) {Base};
            \node[yshift=-1cm, xshift=0.7cm] at (2101) {Rested};
            \node[yshift=-1cm, xshift=0.65cm] at (2100) {Not rested};

            \node[xshift=-1cm] at (100) {$t=1$};
            \node[xshift=-1cm] at (110) {$t=2$};
            \node[xshift=-1cm] at (120) {$t=3$};
            \node[xshift=-1cm] at (130) {$t=4$};
            \node[xshift=-1cm] at (140) {$t=5$};
            \node[xshift=-1cm] at (150) {$t=6$};
            \node[xshift=-1cm] at (160) {$t=7$};
            \node[xshift=-1cm] at (170) {$t=8$};
            \node[xshift=-1cm] at (180) {$t=9$};
            \node[xshift=-1cm] at (190) {$t=10$};
            \node[xshift=-1cm] at (1100) {$t=11$};

            \draw[myred, thick, ->] (100) -- (110);
            \draw[myred, thick, ->] (110) -- (230);
            \draw[myred, thick, ->] (230) -- (240);
            \draw[myred, thick, ->] (240) -- (250);
            \draw[myred, thick, ->] (250) -- (460);
            \draw[myred, thick, ->] (460) -- (171);
            \draw[myred, thick, ->] (171) -- (481);
            \draw[myred, thick, ->] (481) -- (491);
            \draw[myred, thick, ->] (491) -- (4101);


            \node[roundnode] (s1t1)  [right=5cm of 401] {};
            \node[roundnode] (s2t1)  [right=1.5cm of s1t1] {};
            \node[roundnode] (s3t1)  [right=1.5cm of s2t1] {};
            \node[roundnode] (s4t1)  [right=1.5cm of s3t1] {};
            \node[roundnode] (s5t1)  [right=1.5cm of s4t1] {};
            \node[roundnode] (s6t1)  [right=1.5cm of s5t1] {};
            \node[roundnode] (s7t1)  [right=1.5cm of s6t1] {};
            \node[roundnode] (s8t1)  [right=1.5cm of s7t1] {};
            \node[roundnode] (s1t2)  [below=.6cm of s1t1] {};
            \node[roundnode] (s2t2)  [below=.6cm of s2t1] {};
            \node[roundnode] (s3t2)  [below=.6cm of s3t1] {};
            \node[roundnode] (s4t2)  [below=.6cm of s4t1] {};
            \node[roundnode] (s5t2)  [below=.6cm of s5t1] {};
            \node[roundnode] (s6t2)  [below=.6cm of s6t1] {};
            \node[roundnode] (s7t2)  [below=.6cm of s7t1] {};
            \node[roundnode] (s8t2)  [below=.6cm of s8t1] {};
            \node[roundnode] (s1t3)  [below=.6cm of s1t2] {};
            \node[roundnode] (s2t3)  [below=.6cm of s2t2] {};
            \node[roundnode] (s3t3)  [below=.6cm of s3t2] {};
            \node[roundnode] (s4t3)  [below=.6cm of s4t2] {};
            \node[roundnode] (s5t3)  [below=.6cm of s5t2] {};
            \node[roundnode] (s6t3)  [below=.6cm of s6t2] {};
            \node[roundnode] (s7t3)  [below=.6cm of s7t2] {};
            \node[roundnode] (s8t3)  [below=.6cm of s8t2] {};
            \node[roundnode] (s1t4)  [below=.6cm of s1t3] {};
            \node[roundnode] (s2t4)  [below=.6cm of s2t3] {};
            \node[roundnode] (s3t4)  [below=.6cm of s3t3] {};
            \node[roundnode] (s4t4)  [below=.6cm of s4t3] {};
            \node[roundnode] (s5t4)  [below=.6cm of s5t3] {};
            \node[roundnode] (s6t4)  [below=.6cm of s6t3] {};
            \node[roundnode] (s7t4)  [below=.6cm of s7t3] {};
            \node[roundnode] (s8t4)  [below=.6cm of s8t3] {};
            \node[roundnode] (s1t5)  [below=.6cm of s1t4] {};
            \node[roundnode] (s2t5)  [below=.6cm of s2t4] {};
            \node[roundnode] (s3t5)  [below=.6cm of s3t4] {};
            \node[roundnode] (s4t5)  [below=.6cm of s4t4] {};
            \node[roundnode] (s5t5)  [below=.6cm of s5t4] {};
            \node[roundnode] (s6t5)  [below=.6cm of s6t4] {};
            \node[roundnode] (s7t5)  [below=.6cm of s7t4] {};
            \node[roundnode] (s8t5)  [below=.6cm of s8t4] {};
            \node[roundnode] (s1t6)  [below=.6cm of s1t5] {};
            \node[roundnode] (s2t6)  [below=.6cm of s2t5] {};
            \node[roundnode] (s3t6)  [below=.6cm of s3t5] {};
            \node[roundnode] (s4t6)  [below=.6cm of s4t5] {};
            \node[roundnode] (s5t6)  [below=.6cm of s5t5] {};
            \node[roundnode] (s6t6)  [below=.6cm of s6t5] {};
            \node[roundnode] (s7t6)  [below=.6cm of s7t5] {};
            \node[roundnode] (s8t6)  [below=.6cm of s8t5] {};
            \node[roundnode] (s1t7)  [below=.6cm of s1t6] {};
            \node[roundnode] (s2t7)  [below=.6cm of s2t6] {};
            \node[roundnode] (s3t7)  [below=.6cm of s3t6] {};
            \node[roundnode] (s4t7)  [below=.6cm of s4t6] {};
            \node[roundnode] (s5t7)  [below=.6cm of s5t6] {};
            \node[roundnode] (s6t7)  [below=.6cm of s6t6] {};
            \node[roundnode] (s7t7)  [below=.6cm of s7t6] {};
            \node[roundnode] (s8t7)  [below=.6cm of s8t6] {};
            \node[roundnode] (s1t8)  [below=.6cm of s1t7] {};
            \node[roundnode] (s2t8)  [below=.6cm of s2t7] {};
            \node[roundnode] (s3t8)  [below=.6cm of s3t7] {};
            \node[roundnode] (s4t8)  [below=.6cm of s4t7] {};
            \node[roundnode] (s5t8)  [below=.6cm of s5t7] {};
            \node[roundnode] (s6t8)  [below=.6cm of s6t7] {};
            \node[roundnode] (s7t8)  [below=.6cm of s7t7] {};
            \node[roundnode] (s8t8)  [below=.6cm of s8t7] {};
            \node[roundnode] (s1t9)  [below=.6cm of s1t8] {};
            \node[roundnode] (s2t9)  [below=.6cm of s2t8] {};
            \node[roundnode] (s3t9)  [below=.6cm of s3t8] {};
            \node[roundnode] (s4t9)  [below=.6cm of s4t8] {};
            \node[roundnode] (s5t9)  [below=.6cm of s5t8] {};
            \node[roundnode] (s6t9)  [below=.6cm of s6t8] {};
            \node[roundnode] (s7t9)  [below=.6cm of s7t8] {};
            \node[roundnode] (s8t9)  [below=.6cm of s8t8] {};
            \node[roundnode] (s1t10) [below=.6cm of s1t9] {};
            \node[roundnode] (s2t10) [below=.6cm of s2t9] {};
            \node[roundnode] (s3t10) [below=.6cm of s3t9] {};
            \node[roundnode] (s4t10) [below=.6cm of s4t9] {};
            \node[roundnode] (s5t10) [below=.6cm of s5t9] {};
            \node[roundnode] (s6t10) [below=.6cm of s6t9] {};
            \node[roundnode] (s7t10) [below=.6cm of s7t9] {};
            \node[roundnode] (s8t10) [below=.6cm of s8t9] {};
            \node[roundnode] (s1t11) [below=.6cm of s1t10] {};
            \node[roundnode] (s2t11) [below=.6cm of s2t10] {};
            \node[roundnode] (s3t11) [below=.6cm of s3t10] {};
            \node[roundnode] (s4t11) [below=.6cm of s4t10] {};
            \node[roundnode] (s5t11) [below=.6cm of s5t10] {};
            \node[roundnode] (s6t11) [below=.6cm of s6t10] {};
            \node[roundnode] (s7t11) [below=.6cm of s7t10] {};
            \node[roundnode] (s8t11) [below=.6cm of s8t10] {};

            \node[rotate=30, yshift=0.75cm, xshift=0.5cm] at (s1t1) {(S, slow)};
            \node[rotate=30, yshift=0.75cm, xshift=0.5cm] at (s2t1) {(S, fast)};
            \node[rotate=30, yshift=0.75cm, xshift=0.5cm] at (s3t1) {(M, slow)};
            \node[rotate=30, yshift=0.75cm, xshift=0.5cm] at (s4t1) {(M, fast)};
            \node[rotate=30, yshift=0.75cm, xshift=0.5cm] at (s5t1) {(L, slow)};
            \node[rotate=30, yshift=0.75cm, xshift=0.5cm] at (s6t1) {(L, fast)};
            \node[rotate=30, yshift=0.75cm, xshift=0.5cm] at (s7t1) {(E, slow)};
            \node[rotate=30, yshift=0.75cm, xshift=0.5cm] at (s8t1) {(E, fast)};

            \draw[myred, thick, ->] (s1t1) -- (s3t2) node[midway, above right] {0};
            \draw[myred, thick, ->] (s3t2) -- (s4t3) node[midway, above right] {0};
            \draw[myred, thick, ->] (s4t3) -- (s6t4) node[midway, above right] {0};
            \draw[myred, thick, ->] (s6t4) -- (s8t5) node[midway, above right] {0};
            \draw[myred, thick, ->] (s8t5)  -- (s8t6);
            \draw[myred, thick, ->] (s8t6)  -- (s8t7);
            \draw[myred, thick, ->] (s8t7)  -- (s8t8);
            \draw[myred, thick, ->] (s8t8)  -- (s8t9);
            \draw[myred, thick, ->] (s8t9)  -- (s8t10);
            \draw[myred, thick, ->] (s8t10) -- (s8t11);

            \draw[myblue, thick, ->, bend right=15] (s1t1) to node[midway, right] {4} (s1t2);
            \draw[myblue, thick, ->] (s1t2) -- (s2t3) node[midway, above right] {4};
            \draw[myblue, thick, ->] (s2t3) -- (s3t4) node[midway, above right] {4};
            \draw[myblue, thick, ->] (s3t4) -- (s4t5) node[midway, above right] {4};
            \draw[myblue, thick, ->] (s4t5) -- (s4t6) node[midway, right] {6};
            \draw[myblue, thick, ->] (s4t6) -- (s4t7) node[midway, right] {4};
            \draw[myblue, thick, ->] (s4t7) -- (s5t8) node[midway, above right] {4};
            \draw[myblue, thick, ->] (s5t8) -- (s5t9) node[midway, right] {4};
            \draw[mybrown, thick, ->] (s5t9)  -- (s5t10) node[midway, right] {4};
            \draw[mybrown, thick, ->] (s5t10) -- (s5t11);

            \draw[mygreen, thick, ->, bend left=15] (s1t1) to node[midway, left] {8} (s1t2);
            \draw[mygreen, thick, ->] (s1t2) -- (s1t3) node[midway, left] {8};
            \draw[mygreen, thick, ->] (s1t3) -- (s1t4) node[midway, left] {8};
            \draw[mygreen, thick, ->] (s1t4) -- (s1t5) node[midway, left] {8};
            \draw[mygreen, thick, ->] (s1t5) -- (s2t6) node[midway, above left] {2};
            \draw[mygreen, thick, ->] (s2t6) -- (s3t7) node[midway, above left] {8};
            \draw[mybrown, thick, ->] (s3t7)  -- (s3t8) node[midway, left] {8};
            \draw[mybrown, thick, ->] (s3t8)  -- (s3t9);
            \draw[mybrown, thick, ->] (s3t9)  -- (s3t10);
            \draw[mybrown, thick, ->] (s3t10) -- (s3t11);

    \end{tikzpicture}
    \caption{Crew route on a time-space-rest network (left). Fire suppression trajectories on a time-state network (right), where columns represent (area, momentum) state pairs, arc labels indicate the number of crews assigned, and brown arcs indicate a suppressed fire, with $T=10$.}
    \label{fig:ts_networks}
    \vspace{-12pt}
\end{figure}

\subsubsection*{Time-space-rest network for crew assignments.}\label{section:crew_aasignments}

We create time-space-rest networks to track crew locations and rest status. Time-space networks are common in routing and scheduling \citep{liebchen2008first,agarwal2008ship,lee2020dynamic,jacquillat2022optimizing}; we augment them here to keep track of crews' rest requirements. For example, Figure~\ref{fig:ts_networks} shows a sample path where the crew suppresses fire $1$, travels to suppress fire $2$, rests at base, and travels to suppress fire $3$.

We assume that the planning horizon is shorter than the resting horizon (Assumption~\ref{assumption:rest}). Remark~\ref{rem:rest} clarifies that our model can relax this assumption, albeit at additional costs in terms of model size and tractability (see~\ref{app:rest}). Still, this assumption is reasonable given the 14-day resting horizon and medium-term horizon in our optimization setting.

\begin{assumption}\label{assumption:rest}
    The planning horizon is shorter than the minimum rest frequency: $T < \varphi + \gamma$.
\end{assumption}

\begin{remark}\label{rem:rest}
   Travel costs, suppression costs, and rest requirements can be captured in a time-space-rest network representation with $1 + |\calL_j|\times T\times(\varphi+1)$ nodes. Under Assumption~\ref{assumption:rest}, there exists an equivalent representation in a network with $1 + |\calL_j|\times T\times 2$ nodes.
\end{remark}

Under Assumption~\ref{assumption:rest}, we define the time-space-rest network as follows. If crew $j\in\calJ$ last completed a rest at time $r_j^0$, then the latest possible rest time is $R_j=\min(\varphi + r_j^0, T)$. A set of nodes $\calN_j = (\cal L_j \times \{2,\cdots,T+1\}\times\{0,1\}) \,\cup\, (\ell^1_j,1,0)$, indexed by $n$, tracks the location $\lambda(n)\in\cal L_j$, the time stamp $\tau(n)\in\calT\cup\{T+1\}$, and a binary indicator $\nu(n)$ indicating whether the crew has rested. Node $(\ell^1_j,1,0)$ defines the starting node. A set of directed arcs $\calA_j\subseteq \calN_j\times\calN_j$ stores the transitions and associated costs (again, these crew costs are zero in our implementation).
\begin{itemize}
    \item[--] \textit{Traveling arcs:} For all $\ell_1\neq \ell_2 \in \calL_j$ and $r\in \{0, 1\}$, an arc from $n_1=(\ell_1, t_1, r)$ to $n_2=(\ell_2, t_2, r)$ is in $\calA_j$ if a crew at location $\ell_1$ in period $t_1$ can reach $\ell_2$ by $t_2$, i.e., if $\lceil tt(\ell_1,\ell_2)\rceil_{\calT}=\tau(n_2)-\tau(n_1)$, where $\lceil t\rceil_{\calT}=\min\{\tau\in\calT:\tau\geq t\}$ denotes a ``ceiling'' function given the time discretization $\calT$. It is associated with a travel cost $c_a=tc(\ell_1,\ell_2)$ (again, this cost is zero in our implementation).
    \item[--] \textit{Working arcs:} For all fires $g\in\calG_j$, time periods $t\in\{1,\cdots,T\}$, and $r\in \{0, 1\}$, an arc linking $(g, t, r)$ to $(g, t+1, r)$ is in $\calA_j$ to characterize suppression activities. The corresponding cost would encapsulate suppression cost and crew fatigue.
    \item[--] \textit{Resting arcs:} An arc from $n_1=(b_j,t,0)$ to $n_2=(b_j,t',1)$ indicates rest if $t'- t \geq \gamma$, and an arc linking $(b_j, t, r)$ to $(b_j, t+1, r)$ indicates shorter time at base. We enforce rest requirements by removing traveling and working arcs from nodes $n\in\calN_j$ such that $\tau(n)>R_j$ and $\nu(n)=0$.
\end{itemize}

For all $n\in \calN_j$, we denote by $\delta^-(n)$ and $\delta^+(n)$ the set of incoming and outgoing arcs. We define $\calA_j^{gt}=\delta^+((g,t,0))\,\cup\,\delta^+((g,t,1))$ as the set of arcs working on fire $g\in\calG_j$ at time $t\in\calT$.

\subsubsection*{Time-state network for wildfire suppression.}

We create time-state networks to track the state of each fire in each time period as a function of crew assignments. Figure~\ref{fig:ts_networks} shows sample paths with intense, moderate, and delayed suppression. This representation provides a flexible methodology for optimizing wildfire suppression operations and linking them to crew assignments. This representation can be exact if it involves $\calO(J^{GT})$ states; due to the curse of dimensionality, we resort to approximate discretization (we use a grid-based discretization in our implementation).

Formally, the time-state network is defined by a set of nodes $\calN_g = (\calS_g \times \{2,\cdots,T+1\}) \,\cup\, (s^1_g,1)$ and a set of arcs $\calB_g \subseteq \calN_g\times\calN_g$. Each node $n$ encodes the state of the fire at time $\tau(n)$, and $(s^1_g,1)$ defines the initial state. To capture the transition function, we define an arc $a\in\calB_g$ from $(s_1, t)$ to $(s_2, t+1)$ and $x_a$ as the minimum number of crews required to traverse arc $a$, i.e., $\lceil f_{g,t}(s_1,x_a)\rceil_{\calS_g}=s_2$, where $\lceil s\rceil_{\calS_g}=\min\{\sigma\in\calS_g:\sigma\geq s\}$ denotes a ``ceiling'' function given the state discretization $\calS_g$. To capture the cost function, we associate a cost $d_a$ to each arc $a\in\calB_g$ from $(s_1, t)$ to $(s_2, t+1)$ defined such that $d_a=d_{g,t}(s_1,x_a)$ if $t<T$ and such that $d_a=d_{g,t}(s_1,x_a)+h_g(s_2)$ if $t=T$. As specified above, only the terminal cost is applied in our implementation. By construction, the network contains all the information from the dynamical system associated with fire $g\in\calG$.

Finally, we denote by $\delta^-(n)$ and $\delta^+(n)$ the set of incoming and outgoing arcs in node $n$, and by $\calB_g^t=\{a=(n_1,n_2)\in\calB_g:x_a>0,\tau(n_1)=t\}$ the set of arcs that require crew presence at time $t$.

\subsection{Two-sided set partitioning formulation}
\label{subsec:SP}

We leverage the time-expanded network representation to formulate the wildfire suppression problem. We define in~\ref{app:arc} a natural benchmark that uses arc-based variables on these networks. This model, however, involves a large number of binary variables and linking constraints.

Instead, we propose a decomposition approach that optimizes over complete paths for a fire or a crew along the corresponding network. Define the set of routes $\calP_j$ for each crew $j\in\calJ$ as the set of paths in the time-space-rest network starting in $(\ell^1_j,1,0)$ and satisfying flow balance and rest requirements thereafter. Similarly, define the set of suppression plans $\calQ_g$ for each fire $g\in\calG$ as the set of paths in the time-state network starting in $(s^1_g,1)$ and satisfying flow balance thereafter. For crew route $p\in\calP_j$, $\calA^p_j\subseteq\calA_j$ stores the set of traversed arcs, $A_{pgt}=\mathbf{1}(\calA^p_j\cap\A_{j}^{gt}\neq\emptyset)$ indicates whether the crew is assigned to fire $g\in\calG$ at time $t\in\calT$ (referred to as \textit{crew assignments}), and $c_p = \sum_{a\in \calA^p_j}c_a$ denotes its cost. For fire plan $q\in\calQ_g$, $\calB^q_g\subseteq\calB_g$ stores the set of traversed arcs, $B_{qt}=\sum_{a\in\calB^q_g\cap\calB^t_g}x_a$ indicates the number of crews required by suppression plan \(q\) for fire $g\in\calG$ at time $t\in\calT$, (referred to as \textit{fire demand}) and $d_q = \sum_{a\in \calB^q_g}d_a$ denotes its cost. We define the following decision variables:
\begin{align*}
    z_p&=\begin{cases}
        1&\text{if route $p\in\calP_j$ is selected for crew $j\in\calJ$}\\
        0&\text{otherwise.}
    \end{cases}\\
    y_q&=\begin{cases}
        1&\text{if suppression plan $q\in\calQ_g$ is selected for fire $g\in\calG$}\\
        0&\text{otherwise.}
    \end{cases}
\end{align*}

The model minimizes the total cost of crew operations and fire damage (Equation~\eqref{obj2}). Constraints~\eqref{supp_plan}--~\eqref{crew_route} ensure that each fire and crew is assigned to one suppression plan and route, respectively. The linking constraints~\eqref{link set partition} ensure that crew supply meets fire demand.
\begin{mini!}
    {}{\sum\limits_{j \in \calJ}\sum\limits_{p \in \calP_j} c_pz_{p} + \sum\limits_{g \in \calG}\sum\limits_{q \in \calQ_g} d_qy_q }{\label{set partition}}{} \label{obj2}
    \addConstraint{\sum\limits_{q \in \calQ_g}y_q = 1,\,\forall g\in \calG} \label{supp_plan}
    \addConstraint{\sum\limits_{p \in \calP_j}z_{p} = 1,\,\forall j\in \calJ}  \label{crew_route}
    \addConstraint{\sum\limits_{j\in \calJ} \sum\limits_{p \in \calP_j} A_{pgt}z_{p}  -\sum\limits_{q \in \calQ_g} B_{qt}y_q  \ge 0, \,\forall g\in \calG,\forall t\in\calT} \label{link set partition}
    \addConstraint{z_{p}  \in \{0,1\},\forall{p\in \calP_j},\ \forall{j\in \calJ}} \label{crew integrality constr}
    \addConstraint{y_q  \in \{0,1\},\forall{q\in \calQ_g}},\ \forall{g\in \calG} \label{fire integrality constr}
\end{mini!}

This approach applies set partitioning principles for each fire and each crew (Constraints~\eqref{supp_plan}--\eqref{crew_route}), resulting in a ``two-sided'' set partitioning formulation with the fire-crew linking constraints (Constraints~\eqref{link set partition}). The challenge lies in the exponential growth in the number of path-based variables with the number of fires $G$, the number of crews $J$, and the time horizon $T$.

\section{Two-Sided Branch-and-Price-and-Cut Algorithm}
\label{sec:algorithm}

In response, we propose a two-sided branch-and-price-and-cut algorithm to solve the problem. Throughout, we leverage the structure of the linking constraints~\eqref{link set partition} to develop an efficient methodology via dual stabilization, cutting planes, and variable branching.

\subsection{Two-sided Column Generation}
\label{section:CG}

\subsubsection*{Core algorithm.}

The algorithm iterates between a restricted master problem ($\rmp(\calQ_g', \calP_j')$) that solves the linear relaxation based on subsets of routes $\calP_j' \subseteq \calP_j$ and of suppression plans $\calQ_g' \subseteq \calQ_g$, and a pricing problem that adds new crew routes and fire suppression plans with negative reduced costs or proves that none exists. Crew routes and fire suppression plans are connected via the linking fire-crew constraint in the master problem and the corresponding dual costs in the pricing problem. The master problem is formulated as follows:
\begin{mini!}
    {}{\sum\limits_{j \in \calJ}\sum\limits_{p \in \calP'_j} c_pz_{p} + \sum\limits_{g \in \calG}\sum\limits_{q \in \calQ'_g} d_qy_q }{\label{RMP}}{} \label{obj2MP}
    \addConstraint{\sum\limits_{q \in \calQ'_g}y_q = 1,\,\forall g\in \calG} \label{supp_planMP}
    \addConstraint{\sum\limits_{p \in \calP'_j}z_{p} = 1,\,\forall j\in \calJ}  \label{crew_routeMP}
    \addConstraint{\sum\limits_{j\in \calJ} \sum\limits_{p \in \calP'_j} A_{pgt}z_{p}  -\sum\limits_{q \in \calQ'_g} B_{qt}y_q  \ge 0, \,\forall g\in \calG,\forall t\in\calT} \label{link set partition MP}
    \addConstraint{z_{p}\geq0,\forall{p\in \calP'_j,\ \forall j\in\calJ}} \label{crew nonneg constrMP}
    \addConstraint{y_q \geq0,\forall{q\in \calQ'_g}},\ \forall{g\in \calG} \label{fire nonneg constrMP}
\end{mini!}

Let $(\by, \bz)$ denote this problem's primal solution, and $\bsigma\in\R^{\calG}, \bpi\in\R^{\calJ}, \brho\in\R_+^{\calG\times\calT}$ the dual variables associated with Constraints~\eqref{supp_planMP},~\eqref{crew_routeMP} and~\eqref{link set partition MP}, respectively. The pricing problems seek a plan of minimal reduced cost for each fire $g\in\calG$ and a route of minimal reduced cost for each crew $j\in\calJ$:
\begin{align} 
    \text{for all fires $g\in\calG$:}\ &\min_{q\in \calQ_g} \left(d_{q} + \sum_{t\in \calT} B_{qt}\rho_{gt}-\sigma_g\right)\label{sp:fire}\\
    \text{for all crews $j\in\calJ$:}\ &\min_{p\in \calP_j} \left( c_{p} - \sum_{g\in \calG}\sum_{t\in \calT} A_{pgt}\rho_{gt}-\pi_j\right).\label{sp:crew}
\end{align}

The fire subproblem minimizes the damage cost $d_{q}$ plus the cost of bringing crews to fire $g\in\calG$, driven by the dual variables $\rho_{gt}$; the plan is added if these terms are less than the dual value $\sigma_g$ of suppressing fire $g$. The crew subproblem minimizes the routing cost $c_p$ but rewards suppression activities, again driven by the dual variables $\rho_{gt}$; the route is added if these terms are less than the dual cost $\pi_j$ of using crew $j$. Thus, the dual variables of the linking constraints incorporate crew-based information into the fire subproblems and fire-based information into the crew subproblems.

The two-sided column generation procedure is described in Algorithm~\ref{alg:2CG_verbose}. Each iteration solves the master problem, one subproblem per fire, and one subproblem per crew. If all reduced costs are non-negative, the algorithm terminates. Otherwise, the restricted sets of fire suppression plans and crew routes are expanded, and the algorithm continues. By design, the algorithm terminates in a finite number of iterations with an optimal solution of the linear relaxation of Problem~\eqref{set partition}.

\begin{algorithm} [h!]
\caption{\textsc{TwoSidedColumnGeneration}.}\small
\label{alg:2CG_verbose}
\begin{algorithmic}
\item \textbf{Initialization:} Cost $c\gets\infty$; sets $\calQ' \gets \emptyset$, $\calP' \gets \emptyset$; duals $\boldsymbol \sigma \gets M'\cdot \boldsymbol{1}$, $\boldsymbol \pi \gets \boldsymbol{0}$, and $\boldsymbol \rho \gets M\cdot \boldsymbol{1}$, for large $M,M'$.
\item Iterate over Steps 1--3.
\begin{itemize}
    \item[] \textbf{Step 1.} Solve Problem~\eqref{sp:fire} for each fire $g\in\calG$, and let $q^*_g$ be its optimal solution. Solve Problem~\eqref{sp:crew} for each crew $j\in\calJ$, and let $p^*_j$ be its optimal solution.
    \item[] \textbf{Step 2.}  If all pricing problems return solutions with non-negative reduced cost, $\texttt{STOP}$: if $c < \infty$, return $(c, \by, \bz)$; otherwise return $\texttt{INFEASIBLE}$. Otherwise, expand $\calP'_j\gets\calP'_j\cup\{p^*_j\}$ and $\calQ'_g\gets\calQ'_g\cup\{q^*_g\}$ for each crew $j\in\calJ$ and each fire $g\in\calG$ such that the reduced cost of $p^*_j$ or $q^*_g$ is negative.
    \item[] \textbf{Step 3.} Solve $\rmp(\calQ_g', \calP_j')$. If there is a feasible solution $(\by^*, \bz^*)$ with cost $c^*$, update $(\by, \bz)\gets(\by^*, \bz^*)$ and $c\gets c^*$; derive optimal dual solution $(\boldsymbol \sigma^*, \boldsymbol \pi^*, \boldsymbol \rho^*)$ and update $\boldsymbol \sigma \gets \boldsymbol \sigma^*$, $\boldsymbol \pi \gets \boldsymbol \pi^*$, and $\boldsymbol \rho \gets \boldsymbol \rho^*$. Else, derive dual extreme ray $(\boldsymbol \sigma^*, \boldsymbol \pi^*, \boldsymbol \rho^*)$ and update $\boldsymbol \sigma \gets M'\cdot\frac{\boldsymbol\sigma^*}{||\boldsymbol\sigma^*||_1}$, $\boldsymbol \pi \gets M'\cdot\frac{\boldsymbol\pi^*}{||\boldsymbol\pi^*||_1}$ and $\boldsymbol \rho \gets M'\cdot\frac{\boldsymbol\rho^*}{||\boldsymbol\rho^*||_1}$.
\end{itemize}
\end{algorithmic}
\end{algorithm}

From a discrete optimization perspective, the column generation algorithm separates the combinatorial complexities via mixed-integer linear optimization in the master problem from the non-linear system dynamics via dynamic programming in the fire-pricing problems. From a dynamic programming perspective, the decomposition scheme quells the rate of exponential growth: whereas the full problem would involve $\calO(J^{GT})$ state variables and $\calO(2^{JGT})$ decision variables, each fire pricing problem involves $\calO(J^{T})$ state variables and $\calO(J^T)$ decision variables. Moreover, in both pricing problems, this decomposition approach induces a simple shortest-path structure over the time-expanded networks, which is amenable to efficient solution algorithms.

\subsubsection*{Solving the pricing subproblems.}

Both the crew and fire pricing problems can be formulated via dynamic programming over their respective time-expanded networks. In particular, the fire pricing problem can be formulated as follows, using the cost-to-go function $J_{gt}(\cdot)$:
\begin{align*}
	J_{gt}(S_{gt})		&=	\min_{x_{gt}}\left\{d_{gt}(S_{gt},x_{gt})+x_{gt}\cdot\rho_{gt}+J_{g,t+1}(f_{gt}(S_{gt},x_{gt}))\right\},\ \quad\forall t=1,\cdots,T	\\
	J_{g,T+1}(S_{g,T+1})	&=	h_{g}(S_{g,T+1})-\sigma_g
\end{align*}

In fact, both the crew and fire subproblems can be cast as shortest-path problems in the discretized time-expanded networks, upon increasing the cost of every non-terminal arc $a \in \calB^t_g$ by $x_a\cdot \rho_{gt}$, decreasing the cost of every terminal arc $a \in \calB^T_g$ by $\sigma_g$, decreasing the cost of every non-terminal arc $a\in\calA_j^{gt}$ by $\rho_{gt}$, and subtracting $\pi_j$ from the cost of every terminal arc $a\in\calA_j^{gT}$. Furthermore, the time-expanded networks feature a directed acyclic structure since all arcs travel strictly forward in time; thus the shortest paths can be computed via a label-setting algorithm in linear time in the number of nodes and arcs. In our case, the crew pricing problem terminates in $\calO(G^2\cdot T)$ and the fire pricing problem terminates in $\calO(|\calS_g|^2\cdot T)$ (Figure~\ref{fig:ts_networks}). Note, however, that this property relies on the discretization of the state space, which grows exponentially with the time horizon. We provide details on the algorithms used to solve the pricing problems in~\ref{app:SP}.

\subsubsection*{Acceleration via dual stabilization.}

The degeneracy of set partitioning formulations can oftentimes lead to slow convergence \citep{vanderbeck2005implementing}. We leverage a simple property of fire models for dual stabilization, stating that suppression earlier is no worse than later.

\begin{definition}\label{def:deferral}
    A set of plans $\calQ_g$ is \textit{deferral-proof} if for all $q_1, q_2 \in \calQ_g$ such that $B_{q_1t} = B_{q_2t},\forall t\in\calT\setminus\{t_1, t_2\}$ and $B_{q_1t_1} - B_{q_2t_1}  =  B_{q_2t_2} - B_{q_1t_2} \ge 0$ for some $t_1 < t_2$, then $d_{q_1} \le d_{q_2}$.
\end{definition}

This definition motivates a minor reformulation of the problem, where we relax the linking constraint by allowing deferred suppression. We define decision variables $\delta_{gt}\geq0$ tracking the number of crews that are ``saved'' in fire $g\in\calG$ from period $t\in\calT$ for future use, with $\delta_{g0}=0$.

\begin{proposition} \label{prop:deferral}
    If $\calQ_g$ is deferral-proof for all $g\in\calG$, Problem~\eqref{set partition} is equivalent to minimizing Objective~\eqref{obj2} subject to Constraints~\eqref{supp_plan}, \eqref{crew_route}, \eqref{crew integrality constr}, \eqref{fire integrality constr}, and:
    \begin{equation}\label{link set partition new}
        \sum\limits_{j\in \calJ} \sum\limits_{p \in \calP_j} A_{pgt}z_{p}  -\sum\limits_{q \in \calQ_g} B_{qt}y_q   + \delta_{g,t-1} - \delta_{gt} \ge 0, \,\forall g\in \calG,\forall t\in\calT\quad\text{and}\quad\delta_{gt} \ge 0,\forall g\in\calG,\forall t\in\calT
    \end{equation}
\end{proposition}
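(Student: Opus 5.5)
Our plan is to show that the two problems have the same optimal value. First, every feasible solution of Problem~\eqref{set partition} yields a feasible solution of the relaxed problem with the same cost. Second, every feasible solution of the relaxed problem can be converted into a feasible solution of Problem~\eqref{set partition} whose cost is no larger.

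The first direction is immediate. Given $(\by,\bz)$ feasible for Problem~\eqref{set partition}, set $\delta_{gt}=0$ for all $g,t$. Constraint~\eqref{link set partition new} then reduces to Constraint~\eqref{link set partition}, and the objective is unchanged. Hence the relaxed optimum is no larger than the original optimum.

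For the converse, take an integer solution $(\by,\bz,\bdelta)$ feasible for the relaxed problem. For each fire $g$ there is a unique plan $q$ with $y_q=1$. Write $a_t=\sum_j\sum_{p}A_{pgt}z_p$ for the crew supply and $b_t=B_{qt}$ for the demand. Constraint~\eqref{link set partition new} reads $a_t-b_t+\delta_{g,t-1}-\delta_{gt}\geq 0$ with $\delta_{g0}=0$ and $\bdelta\geq0$. Summing over $t\le \tau$ gives the cumulative condition $\sum_{t\le\tau}a_t\ge\sum_{t\le\tau}b_t+\delta_{g\tau}\ge\sum_{t\le\tau}b_t$ for every $\tau$. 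In words, demand may be served early but never late.

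We then modify $q$ into a plan $q'$ with $B_{q't}\le a_t$ for all $t$ and $d_{q'}\le d_q$, while keeping the crew routes unchanged. The procedure is a repeated ``pull-forward'' exchange.
\begin{itemize}
\item[--] Let $t_2$ be the first period with $b_{t_2}>a_{t_2}$.
\item[--] By the cumulative condition, there exists $t_1<t_2$ with slack $a_{t_1}-b_{t_1}>0$.
\item[--] Define a plan $q_1$ that shifts one crew unit (or $\min$ of the slack and the excess) from $t_2$ to $t_1$. Then $q_1$ and $q_2:=q$ differ only at $t_1,t_2$, with $B_{q_1t_1}-B_{q_2t_1}=B_{q_2t_2}-B_{q_1t_2}\ge0$.
\item[--] By Definition~\ref{def:deferral}, $d_{q_1}\le d_{q}$.
\end{itemize}
Each exchange strictly reduces the total excess $\sum_t(b_t-a_t)^+$ and preserves the cumulative condition, since units only move earlier into slack. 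Because the excess is a nonnegative integer, the process terminates at a plan satisfying Constraint~\eqref{link set partition}. Doing this for every fire produces a feasible solution of Problem~\eqref{set partition} with cost at most that of the relaxed solution, so the two optimal values coincide. Moreover, any optimal relaxed solution maps to an optimal original solution.

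The main obstacle is the exchange step. Definition~\ref{def:deferral} is stated over pairs $q_1,q_2$ that both lie in $\calQ_g$, so we must ensure that the shifted demand vector is actually realized by some plan in $\calQ_g$; an arbitrary vector in the time-state network need not correspond to a path. We would address this by reading deferral-proofness as including closure under such shifts, meaning the shifted plan exists. This is the natural interpretation when $\calQ_g$ contains all paths and the dynamics allow any integer crew count at each period, and we would state it explicitly in the proof. A secondary subtlety is the bookkeeping for fractional shifts. We avoid it by working only with integer solutions, where all $a_t,b_t$ are integers, and moving one unit at a time.
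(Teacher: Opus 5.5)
Your argument is correct. It uses the same mechanism as the paper's proof: unit pull-forward exchanges, each certified by Definition~\ref{def:deferral}, while a prefix-sum inequality is maintained. The organization differs, though.

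The paper first replaces $\bdelta$ by $\lfloor\bdelta\rfloor$. It then writes down the repaired plan in one shot from the carry variables, $B_{q't}=B_{q^*t}+\lfloor\delta_{gt}\rfloor-\lfloor\delta_{g,t-1}\rfloor$, omitting $\lfloor\delta_{gT}\rfloor$ in period $T$. It proves $d_{q'}\le d_{q^*}$ by induction on $\sum_t|B_{qt}-B_{q^*t}|$, over all plans whose difference from $q^*$ has nonnegative integer prefix sums and zero total, stripping off one unit deferral at a time. You instead use $\bdelta$ only to derive the cumulative inequality, and then repair $q^*$ greedily against the supply vector $(a_t)$.

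The paper's route gives an explicit formula for the repaired plan. Yours buys two things:
\begin{itemize}
\item[--] It needs no rounding. Only $\by,\bz$ must be integral; $\bdelta$ is continuous in the reformulation, and you never use its integrality, so your phrase ``integer solution $(\by,\bz,\bdelta)$'' can be weakened accordingly.
\item[--] Every intermediate demand vector stays in $\{0,\dots,J\}^{T}$ automatically. Each unit leaves a deficit period ($b_{t_2}-1\ge a_{t_2}\ge 0$) and enters a slack period ($b_{t_1}+1\le a_{t_1}\le J$).
\end{itemize}
By contrast, the paper's one-shot construction checks only $x_t\le J$. It can go negative when $\bdelta$ carries crews that are never consumed: with $T=2$, supply $(5,0)$, $B_{q^*}=(0,0)$, $\delta_{g1}=5$ and $\delta_{g2}=0$, it gives $x_2=-5$. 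So it tacitly needs a preliminary reduction of $\bdelta$ to the minimal feasible carry.

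The realizability issue you flag is shared. The paper simply asserts that any demand vector with entries in $\{0,\dots,J\}$ is realized by a plan in $\calQ_g$, which is a stronger form of the closure assumption you propose. Stating it explicitly is the right call, because Definition~\ref{def:deferral} alone can hold vacuously while the relaxation is strictly cheaper than Problem~\eqref{set partition}. For example, take $\calQ_g$ consisting only of a cost-$0$ plan with demand $(0,1)$ and a cost-$10$ plan with demand $(0,0)$, with the only crew able to work the fire in period $1$ alone. The relaxation uses the first plan at cost $0$ with $\delta_{g1}=1$, whereas Problem~\eqref{set partition} must pay $10$.
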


By relaxing the primal constraint, this change adds the following dual stabilization constraint.
$$\rho_{g(t+1)} - \rho_{gt} \le 0,\,\forall g\in \calG, \forall t\in \{1,\ldots, T-1\}$$

Intuitively, the monotonicity of the dual prices provides a dual view of the deferral-proof property: crews are never more valuable later than earlier on at a fire. The dual stabilization constraints smooths the solution by avoiding oscillations between low and high numbers of crews for each fire. Whereas deferral-proofness is a natural property satisfied in optimization models of wildfire suppression \citep{donovan2003integer,wei2015chance}, it may not be exactly satisfied in our data-driven model in Section~\ref{sec:causalML}. We can then solve the pricing problem with dual stabilization for acceleration purposes in early iterations, and then relax it to ensure exactness.

\subsection{Cutting planes: augmented GUB inequalities} \label{section:cutting planes}

Our two-sided set partitioning formulation may exhibit a weak linear relaxation due to the non-linear dynamics---namely, the cost of a convex combination of plans may differ from the convex combination of costs. Consider a simple two-period setting with plan $q_1$ assigning 10 crews in period 1 and 0 in period 2 (cost 100); plan $q_2$ assigning no crew in period 1 and 10 crews in period 2 (cost 200); and plan $q_3$ assigning 5 crews in periods 1 and 2 (cost 160). Plan $q_3$ is critical to the integer solution problem but the relaxation may rely on plans $q_1$ and $q_2$ with weights $y_1=y_2=0.5$. This challenge is somewhat uncommon in binary set partitioning formulation, and arises due to the integer structure of wildfire suppression operations in our setting.

We incorporate valid inequalities by adapting, strengthening, and augmenting the generalized-upper-bound (GUB) cover cuts from \cite{wolsey1990valid}. We refer to $\calF$ as the feasible region of the integer optimization problem (Constraints~\eqref{supp_plan}--\eqref{fire integrality constr}). We exploit the fact that the linking constraints~\eqref{link set partition} along with the set partitioning constraints~\eqref{crew_route} induce a 0-1 knapsack structure at any time $t\in\calT$---namely, fire demand plus idle crews must not exceed available crews:
\begin{equation}\label{knapsack}
    \sum_{g\in \calG}\sum_{q\in \calQ_g} B_{qt}y_q + \sum_{j\in \calJ}\sum_{p\in \calP_j}\mathbbm 1\left[\sum_{g\in \calG}A_{pgt} = 0\right]z_p \le J.
\end{equation}

Proposition~\ref{prop:robust} identifies a broad family of ``robust'' cuts, i.e., valid inequalities that do not impact the shortest path structure of the pricing problems. Indeed, the dual variables associated with Equation~\eqref{robust_inequality} can be incorporated by adjusting arc costs in the time-expanded networks.

\begin{proposition}[Robust inequalities]\label{prop:robust}
The following set of valid inequalities $\calU$ is robust:
\begin{equation}\label{robust_inequality}
    \sum_{g\in \calG}\sum_{q\in \calQ_g}\sum_{d=0}^{J} \mathbbm 1[B_{qt_u} = d]\delta_{ugd}y_q + \sum_{j\in \calJ}\sum_{p\in \calP_j}\sum_{g\in \calG}\sum_{d=0}^1\gamma_{ugjd}\mathbbm 1[A_{pgt_u} = d]z_p \le R_u,\ \forall u\in\calU.
\end{equation}
\end{proposition}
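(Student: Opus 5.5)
Robustness here means that adding any family of inequalities of the form \eqref{robust_inequality} to the restricted master problem leaves the pricing problems \eqref{sp:fire}--\eqref{sp:crew} as shortest-path problems on the same time-expanded networks $(\calN_g,\calB_g)$ and $(\calN_j,\calA_j)$, with only the arc costs modified. The plan is to write down the modified reduced costs and show that each new dual term decomposes additively over arcs. Validity is assumed: the family $\calU$ is taken to consist of inequalities valid for $\calF$, such as those derived from the knapsack structure \eqref{knapsack}. The statement is about their structural effect on pricing.

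\textbf{Step 1: modified reduced costs.} Let $\eta_u\ge 0$ be the dual of cut $u\in\calU$. Then the reduced cost of a fire plan $q\in\calQ_g$ becomes
\[
d_q+\sum_{t\in\calT}B_{qt}\rho_{gt}-\sigma_g+\sum_{u\in\calU}\eta_u\sum_{d=0}^{J}\mathbbm 1[B_{qt_u}=d]\,\delta_{ugd},
\]
and the reduced cost of a crew route $p\in\calP_j$ becomes
\[
c_p-\sum_{g,t}A_{pgt}\rho_{gt}-\pi_j+\sum_{u\in\calU}\eta_u\sum_{g\in\calG}\sum_{d=0}^{1}\gamma_{ugjd}\,\mathbbm 1[A_{pgt_u}=d].
\]

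\textbf{Step 2: fire side.} In the time-state network, every path leaves exactly one node at each time $t$, hence traverses exactly one arc with tail time $t_u$. This arc determines $B_{qt_u}$: it equals $x_a$ if $a\in\calB_g^{t_u}$ and $0$ otherwise. Therefore
\[
\mathbbm 1[B_{qt_u}=d]=\sum_{a\in\calB^q_g:\,\tau(\text{tail}(a))=t_u}\mathbbm 1[x_a=d].
\]
So we add $\sum_u\eta_u\delta_{ug,x_a}$ to the cost of each arc $a$ with tail time $t_u$. Arcs with $x_a=0$ receive the $d=0$ term, so plans with no demand at $t_u$ are also charged correctly. The path cost then equals the new reduced cost, and the DAG label-setting algorithm applies unchanged.

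\textbf{Step 3: crew side (main obstacle).} Here $A_{pgt}$ is defined through the set $\calA_j^{gt}$ of working arcs, and the difficulty is that the $d=0$ term $\mathbbm 1[A_{pgt_u}=0]$ is not a sum of arc indicators. I will use the identity $\mathbbm 1[A_{pgt_u}=0]=1-\mathbbm 1[A_{pgt_u}=1]$. This requires checking that a route traverses at most one arc of $\calA_j^{gt_u}$, which holds because a path passes through at most one node at time $t_u$. Given this, the cut term equals the constant $\sum_g\gamma_{ugj0}$, absorbed into $\pi_j$, plus $\sum_g(\gamma_{ugj1}-\gamma_{ugj0})$ times the arc indicators of $\calA_j^{gt_u}$. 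These are charged to the corresponding working arcs, just as $\rho_{gt}$ is.

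A subtlety is that a traveling arc may span time $t_u$ without leaving any node at that time. This causes no problem, since such a route is not working at $t_u$, so $A_{pgt_u}=0$ and the constant term already accounts for it. Both pricing problems therefore remain shortest paths on acyclic networks with modified arc costs, which establishes robustness.
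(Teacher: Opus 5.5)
Your proposal is correct and follows the paper's proof. The paper likewise writes the reduced costs with the cut duals and charges $\delta_{ugd}$ times the dual to each fire arc according to $x_a$. On the crew side it charges $(\gamma_{ugj1}-\gamma_{ugj0})$ times the dual to the arcs of $\calA_j^{gt_u}$ and absorbs the constant $\gamma_{ugj0}$ term into the terminal arc costs, which is equivalent to your shift of $\pi_j$. Your version is slightly more careful than the paper's in two places: you charge the $d=0$ term to fire arcs with $x_a=0$, which lie outside $\calB_g^{t_u}$, and you explicitly check that a route uses at most one arc of $\calA_j^{gt_u}$.
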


\subsubsection*{GUB cover cuts.}

Theorem~\ref{thm::GUB cover} adapts the GUB cover cuts from \cite{wolsey1990valid} to our problem.

\begin{theorem} \label{thm::GUB cover}
    Let $t\in \calT$, $\calG_u \subseteq \calG$, $\calJ_u \subseteq \calJ$, and  $\{D_{ug}\} \in \{0,\ldots, J\}^{\calG_u}$ be such that $\sum\limits_{g\in \calG_u} D_{ug} + |\calJ_u|> J$. The following GUB cover inequality is valid for Problem~\eqref{set partition} and robust:
\begin{equation}\label{gub_cover}
    \sum_{g\in \calG_u}\sum_{q\in \calQ_g} \mathbbm 1[B_{qt} \ge D_{ug}]y_q + \sum_{j\in \calJ_u}\sum_{p\in \calP_j}\mathbbm 1\left[\sum_{g\in \calG_u}A_{pgt} = 0\right]z_p \le |\calG_u| + |\calJ_u| - 1.
\end{equation}
\end{theorem}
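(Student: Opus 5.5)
The plan has two parts. Validity comes from a counting argument built on the knapsack inequality~\eqref{knapsack}, evaluated at any integer feasible point. Robustness follows because inequality~\eqref{gub_cover} is a special case of the family in Proposition~\ref{prop:robust}.

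\textbf{Validity.} Fix an integer solution $(\by,\bz)\in\calF$. By Constraints~\eqref{supp_plan}--\eqref{crew_route}, each fire $g$ selects exactly one plan $q_g$ and each crew $j$ exactly one route $p_j$. The left-hand side of~\eqref{gub_cover} therefore counts two kinds of indices:
\begin{itemize}
\item[--] fires $g\in\calG_u$ with $B_{q_gt}\ge D_{ug}$ (call this set $\calG^*$);
\item[--] crews $j\in\calJ_u$ not working on any fire of $\calG_u$ at time $t$ (call this set $\calJ^*$).
\end{itemize}
Each term is at most one, so the left-hand side is at most $|\calG_u|+|\calJ_u|$. It suffices to rule out $\calG^*=\calG_u$ and $\calJ^*=\calJ_u$ holding simultaneously.

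Suppose both hold. By the linking constraints~\eqref{link set partition}, the number of crews at fire $g$ at time $t$ is at least $B_{q_gt}\ge D_{ug}$. Summing over $g\in\calG_u$, and using that each crew works on at most one fire per period (one working arc per period in a path of the time-space-rest network), at least $\sum_{g\in\calG_u}D_{ug}$ distinct crews are assigned to fires in $\calG_u$ at time $t$. None of these is in $\calJ_u$, since every crew in $\calJ_u$ avoids $\calG_u$ at time $t$. Hence
\[
J\ \ge\ |\calJ_u|+\sum_{g\in\calG_u}D_{ug}\ >\ J,
\]
a contradiction.

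The one point that needs care is the claim that a crew contributes to at most one fire per period, i.e., $\sum_g A_{pgt}\le 1$. This follows from the network construction: a path has exactly one outgoing arc from its node at time $\tau$, and working arcs are only the arcs out of the nodes $(g,t,r)$. I would state this explicitly rather than route the argument through~\eqref{knapsack}, because the argument uses a restricted count: crews outside $\calJ_u$ may also be idle or work elsewhere, and that only helps.

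\textbf{Robustness.} I would match~\eqref{gub_cover} to~\eqref{robust_inequality} by setting $t_u=t$, $\delta_{ugd}=\mathbbm 1[g\in\calG_u,\ d\ge D_{ug}]$ and $R_u=|\calG_u|+|\calJ_u|-1$.

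The crew term is the main obstacle, because $\mathbbm 1[\sum_{g\in\calG_u}A_{pgt}=0]$ is not literally of the form $\sum_g\sum_d\gamma_{ugjd}\mathbbm 1[A_{pgt}=d]$. Since at most one $A_{pgt}$ equals one, we have $\mathbbm 1[\sum_{g\in\calG_u}A_{pgt}=0]=1-\sum_{g\in\calG_u}A_{pgt}$. I would first check whether Proposition~\ref{prop:robust}'s coefficients are meant to be affine in each $A_{pgt}$. If so, the rewrite applies directly with $\gamma_{ugj1}=-1$ for $g\in\calG_u$, $j\in\calJ_u$. If not, I would argue robustness directly: the dual of the cut adds a constant to every route of crew $j\in\calJ_u$ and subtracts it on working arcs into fires of $\calG_u$ at time $t$. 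For fires, it adjusts the cost of arcs in $\calB_g^t$ according to whether $x_a\ge D_{ug}$. Both adjustments are arc-cost modifications, so the shortest-path structure of the pricing problems~\eqref{sp:fire}--\eqref{sp:crew} is preserved.
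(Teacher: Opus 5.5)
Your proposal is correct and follows the paper's proof. For validity, you assume every selected plan in $\calG_u$ meets its target and every crew in $\calJ_u$ avoids $\calG_u$ at time $t$, then sum the linking constraints over $\calG_u$ and use $\sum_{g}A_{pgt}\le 1$ (Lemma~\ref{lem crew route}) to force $\sum_{g\in\calG_u}D_{ug}+|\calJ_u|\le J$; for robustness, you match the cut to Proposition~\ref{prop:robust} with $\delta_{ugd}=\mathbbm 1[g\in\calG_u,\ d\ge D_{ug}]$, as the paper does. Your affine rewrite $\mathbbm 1[\sum_{g\in\calG_u}A_{pgt}=0]=1-\sum_{g\in\calG_u}A_{pgt}$ is more careful than the paper's crew coefficients, which yield $\sum_{g\in\calG_u}\mathbbm 1[A_{pgt}=0]$ for every crew; however, $\gamma_{ugj1}=-1$ on its own drops the constant $1$ per crew in $\calJ_u$, so either take $R_u=|\calG_u|-1$ using~\eqref{crew_route} or, for one $g_0\in\calG_u$, set $\gamma_{ug_0j0}=1$ and $\gamma_{ug_0j1}=0$ instead.
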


We can interpret $D_{ug}$ as a target demand for fire $g\in\calG_u$. The cut states that, if the total target plus the number of crews in $\calJ_u$ exceeds the total number of crews, at least one fire in $\calG_u$ will not meet its target demand or at least one crew in $\calJ_u$ will have to suppress one fire in $\calG_u$ at time $t\in\calT$.

\paragraph{Example with $J=10$:} Assume that Fire 1 is suppressed by 2 crews at time $t$, and Fire 2 is assigned 4-crew, 8-crew, and 10-crew plans with weights 0.6, 0.2, and 0.2. The following cut ($\calG_u=\{1,2\}$ and $\calJ_u=\emptyset$) eliminates the solution by expressing that we cannot leverage 12 crews:
    \begin{align*}
        & \sum\limits_{q\in \calQ_1} \mathbbm 1[B_{qt} \ge 2]y_q  + \sum\limits_{q\in \calQ_2} \mathbbm 1[B_{qt} \ge 10]y_q  \le 1.
    \end{align*}

We provide additional details on these cuts in~\ref{app:GUBdetails}. We first elicit an enumerative scheme to separate the GUB cut. We then derive a procedure to strengthen the GUB cuts in our column generation context---that is, to tighten the cuts obtained in a lower-dimensional polyhedron into the full-dimensional polyhedron by reintroducing omitted variables. We also exploit a matroid structure to devise an efficient greedy algorithm in the strengthening procedure.

\subsubsection*{Augmented GUB (A-GUB) cuts.} Theorem~\ref{thm:general_gub_aware_cut} identifies a broader family of robust cuts.

\begin{theorem} \label{thm:general_gub_aware_cut}

Let $t\in \calT$, $\calG_u \subseteq \calG$, $\calJ_u \subseteq \calJ$, $\{\delta_{ugd}\} \in [0,1]^{\mathcal{G}_u\times\{0, \ldots, J\}}$, $K_u\in \mathbb R$. Let us define:
$$\calH=\left\{S\subseteq\mathcal{G}_u\times\{0, \ldots, J\}:\sum\limits_{(g, d)\in S} d \le J - |\calJ_u|,\ \text{and}\ |\{(g', d) \in S : g' = g\}| \le 1,\ \forall g\in \mathcal{G}_u\right\}$$
If $\sum\limits_{(g, d) \in S} \delta_{ugd} \le K_u$ for all subsets $S\in\calH$, then the following inequality is valid for $\text{conv}(\calF)$:
\begin{equation}\label{general_gub_ineq}
    \sum_{g\in \calG_u}\sum_{q\in \calQ_g}\sum_{d=0}^{J} \mathbbm 1[B_{qt} = d]\delta_{ugd}y_q + \sum_{j\in \calJ_u}\sum_{p\in \calP_j}\mathbbm 1\left[\sum_{g\in \calG_u}A_{pgt} = 0\right]z_p \le |\calJ_u| + K_u.
\end{equation}
\end{theorem}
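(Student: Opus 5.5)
Since the inequality~\eqref{general_gub_ineq} is linear, validity for $\text{conv}(\calF)$ follows once it holds at every integer point of $\calF$. So the plan is to fix an arbitrary feasible integer solution $(\by,\bz)$ and check the inequality directly.

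\emph{Step 1: read off the solution.} By Constraints~\eqref{supp_plan} and~\eqref{fire integrality constr}, each fire $g\in\calG_u$ has exactly one selected plan $q_g$. Write $d_g:=B_{q_gt}\in\{0,\ldots,J\}$. The first sum on the left-hand side then equals $\sum_{g\in\calG_u}\delta_{ugd_g}$. Here we should confirm that $B_{qt}\le J$, or else extend $\delta_{ugd}=0$ for $d>J$. This is harmless: linking constraint~\eqref{link set partition} gives $B_{q_gt}\le\sum_j A_{pgt}\le J$ at any feasible point, since each crew is in at most one location at time $t$. Likewise, each crew $j$ has exactly one route $p_j$. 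Let $I\subseteq\calJ_u$ be the set of crews in $\calJ_u$ whose route does not work on any fire of $\calG_u$ at time $t$. The second sum then equals $|I|$.

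\emph{Step 2: the key counting bound.} Each crew is assigned to at most one fire at time $t$, because a route traverses exactly one arc leaving time $t$. Summing the linking constraints~\eqref{link set partition} over $g\in\calG_u$ therefore gives
\[
\sum_{g\in\calG_u} d_g\le \sum_{g\in\calG_u}\sum_{j\in\calJ}A_{p_jgt}\le J-|I|,
\]
since the crews in $I$ contribute nothing to fires in $\calG_u$ at time $t$. This is the knapsack structure~\eqref{knapsack} restricted to $\calG_u$.

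\emph{Step 3: case split on $|I|$.} If $|I|=|\calJ_u|$, then $\sum_g d_g\le J-|\calJ_u|$. The set $S=\{(g,d_g):g\in\calG_u\}$ takes at most one pair per fire, so $S\in\calH$, and the hypothesis gives $\sum_g\delta_{ugd_g}\le K_u$. The left-hand side is then at most $|\calJ_u|+K_u$. If instead $|I|<|\calJ_u|$, then $S$ may fail to lie in $\calH$. This is the main obstacle: the slack $|\calJ_u|-|I|\ge1$ must absorb the excess of $\sum_g\delta_{ugd_g}$ over $K_u$.

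\emph{Step 4: handling the obstacle.} I would first try to show $\sum_g\delta_{ugd_g}\le K_u+(|\calJ_u|-|I|)$. Each crew of $\calJ_u\setminus I$ works on exactly one fire, so it raises the total demand $\sum_g d_g$ by at most one unit. The plan is to remove fires from $S$ one at a time, dropping a fire $g$ whenever dropping it is needed to restore the budget, until the remaining set $S'\subseteq S$ satisfies $\sum_{S'}d\le J-|\calJ_u|$. Since every $\delta\in[0,1]$, each removal costs at most $1$ in $\delta$-mass. The delicate point is to bound the number of removals by $|\calJ_u|-|I|$. Removing a fire of large demand frees many units at once, but removals are counted by fires, not by demand units. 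I would therefore pick the fires to drop among those served by the crews in $\calJ_u\setminus I$: there are at most $|\calJ_u|-|I|$ such fires. Removing all of them deletes at least all the demand those crews cover, because $d_g$ is bounded by the crews actually assigned to $g$. Then $\sum_{S'}d\le J-|\calJ_u\setminus I|-|I|=J-|\calJ_u|$, so $S'\in\calH$. Hence $\sum_S\delta\le K_u+(|\calJ_u|-|I|)$, and adding $|I|$ gives the bound $|\calJ_u|+K_u$. If the counting in this last step is subtle, I would fall back on the $\{0,1\}$ bounds on $\delta$ to take care of zero-demand pairs, which can be absorbed freely.
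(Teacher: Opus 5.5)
Your proof is correct, but it takes a different route from the paper's.

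The paper argues only with aggregate quantities. It sets $N=\sum_{g\in\calG_u}d_g$ and the excess $M=\max(0,N-J+|\calJ_u|)$. It then drops the $\min(M,|\calG_u|)$ fires of largest demand, showing by contradiction that these carry at least that many units, so the remaining pairs fit within $J-|\calJ_u|$ and form a set in $\calH$. It charges at most $1$ per dropped fire and bounds the crew term by $\min(|\calJ_u|,J-N)$ via the knapsack count. It closes with the identity $\max(0,N-J+|\calJ_u|)+\min(|\calJ_u|,J-N)=|\calJ_u|$.

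You instead exploit the explicit crew-to-fire assignment. The set $F$ of fires served by crews in $\calJ_u\setminus I$ satisfies $|F|\le|\calJ_u|-|I|$ immediately. Every fire in $\calG_u\setminus F$ is served only by crews in $\calJ\setminus\calJ_u$: crews in $I$ serve no fire of $\calG_u$, and crews in $\calJ_u\setminus I$ serve only fires in $F$. Summing the linking constraints over $\calG_u\setminus F$ and using one-fire-per-crew then gives $\sum_{g\in\calG_u\setminus F}d_g\le J-|\calJ_u|$. That is the precise content of your phrase ``deletes at least all the demand those crews cover.'' Your Step 3 is just the special case $F=\emptyset$, and the closing fallback remark about zero-demand pairs is not needed.

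Your route is shorter and avoids both the largest-demand selection and the max/min bookkeeping. The paper's route has the merit of using only the aggregated inequality $\sum_{g\in\calG_u}d_g+|I|\le J$ plus integrality of the $d_g$. It therefore shows the A-GUB cut is implied by the knapsack structure~\eqref{knapsack} alone, which is how the cut family is motivated.

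A minor wording point in Step 2: a route traverses at most one arc leaving time $t$, not exactly one, since the crew may be in transit. Only ``at most one'' is used, as in Lemma~\ref{lem crew route}.
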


The set $\calH$ stores all sets of target demands for fires in $\calG_u$ that could be met without using any crew in $\calJ_u$. The inequality states that, if we choose weighted fire demands for $\calG_u$ that exceed a budget $K_u$ induced by $\calH$, then at least one of the crews in $\calJ_u$ must suppress one of the fires in $\calG_u$. In particular, Equation~\eqref{general_gub_ineq} adds a degree of freedom to Equation~\eqref{gub_cover} by allowing fractional weights $\delta_{ugd}$. Proposition~\ref{prop:CGLP superset} shows that, in turn, this cut family tightens the GUB linear relaxation.

\begin{proposition} \label{prop:CGLP superset}
    Let $\calF_1$ and $\calF_2$ be the solutions to Problem~\eqref{set partition} that satisfy all inequalities from Theorem~\ref{thm::GUB cover} and from Theorem~\ref{thm:general_gub_aware_cut}, respectively. Then $\calF_2 \subseteq \calF_1$.
\end{proposition}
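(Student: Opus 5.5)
The plan is to show that every GUB cover inequality~\eqref{gub_cover} is dominated by, and in fact equal to, an A-GUB inequality~\eqref{general_gub_ineq}. Then any point satisfying all A-GUB cuts satisfies all GUB cover cuts, which gives $\calF_2\subseteq\calF_1$.

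\textbf{Step 1: choose the A-GUB parameters.} Fix a GUB cover cut given by $t$, $\calG_u$, $\calJ_u$ and targets $\{D_{ug}\}$ with $\sum_{g\in\calG_u}D_{ug}+|\calJ_u|>J$. Keep the same $t$, $\calG_u$, $\calJ_u$, and set
\[
\delta_{ugd}=\mathbbm 1[d\ge D_{ug}]\in[0,1], \qquad K_u=|\calG_u|-1 .
\]

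\textbf{Step 2: the left-hand sides agree.} For any $q\in\calQ_g$ we have $B_{qt}\in\{0,\ldots,J\}$. Indeed, fire demand cannot exceed $J$ in any plan used with positive weight; if demand can exceed $J$, we either restrict to such plans or extend the sum over $d$, and I would flag this briefly. Hence
\[
\sum_{d=0}^J \mathbbm 1[B_{qt}=d]\,\delta_{ugd}=\mathbbm 1[B_{qt}\ge D_{ug}] .
\]
The crew terms are literally identical in both inequalities. The right-hand side is $|\calJ_u|+K_u=|\calG_u|+|\calJ_u|-1$, matching~\eqref{gub_cover}.

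\textbf{Step 3: check the hypothesis of Theorem~\ref{thm:general_gub_aware_cut}.} We need $\sum_{(g,d)\in S}\delta_{ugd}\le |\calG_u|-1$ for every $S\in\calH$. Each fire contributes at most one pair to $S$, so the sum counts the fires $g$ whose chosen $d$ satisfies $d\ge D_{ug}$. This count is at most $|\calG_u|$.

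Suppose it equals $|\calG_u|$. Then every fire in $\calG_u$ appears in $S$ with $d\ge D_{ug}$. This gives
\[
J-|\calJ_u|\ \ge\ \sum_{(g,d)\in S}d\ \ge\ \sum_{g\in\calG_u}D_{ug},
\]
which contradicts the cover condition $\sum_{g\in\calG_u}D_{ug}+|\calJ_u|>J$. So the count is at most $|\calG_u|-1$, and the hypothesis holds.

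\textbf{Step 4: conclude.} Since each GUB cover cut coincides with a member of the A-GUB family, the set cut out by the A-GUB family is contained in the set cut out by the GUB family. This gives $\calF_2\subseteq\calF_1$. Here $\calF_1$ and $\calF_2$ are read as the sets of (fractional) solutions of the relaxation of Problem~\eqref{set partition}, since for integer points both families are valid and the inclusion would be trivial.

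The only delicate point is the edge case in Step 2, where $B_{qt}$ could fall outside $\{0,\ldots,J\}$; otherwise the proof is a direct parameter substitution.
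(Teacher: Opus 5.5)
Your proposal is correct and matches the paper's proof essentially step for step. You use the same parameters $\delta_{ugd}=\mathbbm 1[d\ge D_{ug}]$ and $K_u=|\calG_u|-1$, the same at-most-one-pair-per-fire counting with a contradiction to the cover condition to verify the hypothesis on $\calH$, and the same collapse of $\sum_{d=0}^{J}\mathbbm 1[B_{qt}=d]\,\delta_{ugd}$ to $\mathbbm 1[B_{qt}\ge D_{ug}]$. The edge case you flag does not arise: Lemma~\ref{lem fire plan} gives $B_{qt}\le\max_{a\in\calB_g^t}x_a\le J$ by construction of the time-state network.
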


\paragraph{Example with $J=10$:} Assume that Fires 1 and 2 are assigned 0-crew and 2-crew plans with weights of 0.1 and 0.9; and that Fire 3 is assigned 6-crew, 7-crew and 9-crew plans with weights 0.8, 0.1 and 0.1. The following cut ($\calG_u=\{1,2,3\}$, $\calJ_u=\emptyset$) eliminates the solution by expressing that, if we serve a target demand of $D_{ug}=9$ for fire 3, then we cannot serve a target demand of 2 for fire 1 \textit{or} fire 2; and that, if we serve a target demand of 7 or 8 for fire 3, then we cannot serve the target demand of 2 for fire 1 \textit{and} fire 2 simultaneously.
\begin{align*}
        \frac{1}{2}\sum\limits_{q\in \calQ_1} \mathbbm 1[B_{qt} \ge 2]y_q  + &\frac{1}{2}\sum\limits_{q\in \calQ_2} \mathbbm 1[B_{qt} \ge 2]y_q + \frac{1}{2}\sum\limits_{q\in \calQ_3} \mathbbm 1[7 \le B_{qt} \le 8]y_q + \sum\limits_{q\in \calQ_3} \mathbbm 1[B_{qt} \ge 9]y_q  \le 1
    \end{align*}

\paragraph{Separation.} Problem~\eqref{general_gub_cglp} provides a cut generating linear program (CGLP) by maximizing the violation from a fractional solution $(\mathbf y^*, \mathbf z^*)$, given unused crews $\calJ_u = \bigg\{j \in \calJ : \sum\limits_{p\in \calP_j} \sum\limits_{g\in \calG} A_{pgt}z^*_p = 0\bigg\}$.
    \begin{maxi!}
    {\bdelta,K}{\left(\sum_{g\in \calG}\sum\limits_{q \in \calQ_g}\sum_{d=0}^{J} \mathbbm{1}[B_{qt} = d]y^*_q\delta_{gd}\right) - K }{\label{general_gub_cglp}}{} 
    \addConstraint{\sum_{(g, d)\in S}\delta_{gd} \le K,\, \forall S\in \calH} \label{exponential contraint}
    \addConstraint{0\leq \delta_{gd} \le 1,\, \forall g\in \calG, \forall d\in \{0, \ldots , J\}}
    \end{maxi!}
    
\subsection{Branching with dual-aware variable selection}
\label{section:branching}

We embed the two-sided column generation and cutting planes into a branch-and-price-and-cut algorithm. We devise a branching tree by branching on fire demand and crew assignments:
\begin{align}
    & \left(\sum\limits_{q \in \calQ_g}\mathbbm{1}[B_{qt} > d]y_q = 0\right)  \lor \left(\sum\limits_{q \in \calQ_g}\mathbbm{1}[B_{qt} > d]y_q = 1\right)\ \text{for $g\in\calG$, $t\in\calT$, and $d \in \{0, \dots, J - 1\}$} \label{fire_demand_b_rule}\\
    & \left(\sum\limits_{p \in \calP_j }\mathbbm{1}[A_{pgt} = 1]z_p = 0\right)  \lor \left(\sum\limits_{p \in \calP_j }\mathbbm{1}[A_{pgt} = 1]z_p = 1\right)\ \text{for $j\in\calJ$, $g\in\calG$, and $t\in\calT$} \label{crew_supply_b_rule}
\end{align}

These disjunctions define two child nodes that preserve all integer solutions. We incorporate the disjunctive constraints in the master problem by replacing the set partitioning constraints (Equation~\eqref{supp_plan} or~\eqref{crew_route}) with inequalities of the form $\sum\limits_{q \in \calQ_g}\mathbbm{1}[d_1 < B_{qt} \le d_2]y_q = 1$ and $\sum\limits_{p \in \calP_j}\mathbbm{1}[A_{pgt} = k]z_p = 1$, and in the pricing problems by removing arcs violating fire demands or crew assignments.

\subsubsection*{Variable selection.}

When selecting the variable to branch on at each node, the common most-fractional branching rule is insufficient to guarantee the exactness of the algorithm in our problem---again, due to the non-linear fire dynamics. For example, most-fractional branching may terminate with a solution in which fire $g$ is served by a 0-crew plan and a 10-crew plan with $0.5$ weights. If the fire damage cost were linear, this solution could be brought into an ``average'' 5-crew plan. However, the cost of the average plan may not be equal to the average cost of the constituent plans, so this transformation is no longer guaranteed to lead to an optimal solution.

\paragraph{Maximum-variance branching.}
Using a probabilistic interpretation of $(\mathbf y^*, \mathbf z^*)$, we propose instead a maximum-variance rule that branches on a variable with the largest variance, defined as follows:
\begin{align*}
    v(g, t) &= \left(\sum_{q\in \calQ_g} B_{qt}^2y^*_q\right) - \left(\sum_{q\in \calQ_g} B_{qt}y^*_q\right)^2\ \text{and}\ 
    w(j, g, t) = \left(\sum_{p\in \calP_j} A_{pgt}^2z^*_p\right) - \left(\sum_{p\in \calP_j} A_{pgt}z^*_p\right)^2
\end{align*}

Proposition~\ref{prop:min_var} shows that this rule eliminates the incumbent fractional solution whenever it branches on a positive-variance variable. Vice versa, if all variables have zero variance, fire demands and crew assignments are identical across all columns of positive support, and the fractional solution can be converted into an integer solution of same cost. Therefore, unlike most-fractional branching, maximum-variance branching yields an exact branch-and-price-and-cut algorithm.

\begin{proposition} \label{prop:min_var}
    If $v(g,t)>0$ for $g\in\calG,t\in\calT$, or if $w(j,g,t)>0$ for some $j\in\calJ,g\in\calG,t\in\calT$, then $(\mathbf y^*, \mathbf z^*)$ is infeasible in each child node defined by Equations~\eqref{fire_demand_b_rule}--\eqref{crew_supply_b_rule}. Vice versa, $v(g,t)=0$ if and only if there exists $\beta_{gt}\in\Z_+$ such that $B_{qt} = \beta_{gt}$ for all $q\in\calQ_g$ such that $y^*_q>0$; and, $w(j,g,t)=0$ if and only if there exists $\alpha_{jgt}\in\{0,1\}$ such that $A_{pgt} = \alpha_{jgt}$ for all $p\in\calP_j$ such that $z^*_p>0$.
\end{proposition}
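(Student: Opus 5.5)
The argument rests on viewing the fractional solution restricted to a single fire (or crew) as a probability distribution. For fixed $g,t$, set $Y_q=y^*_q$ for $q\in\calQ_g$. By the partitioning constraint~\eqref{supp_plan} in the current node, these weights sum to one, so they define a distribution over plans. Under this distribution, $v(g,t)$ is exactly the variance of the random variable $B_{qt}$. Likewise, $w(j,g,t)$ is the variance of $A_{pgt}$ under the distribution $(z^*_p)_{p\in\calP_j}$ given by~\eqref{crew_route}. One caveat: at a node where~\eqref{supp_plan} has been replaced by a branching equality $\sum_q \mathbbm 1[d_1<B_{qt'}\le d_2]y_q=1$, I will note that columns violating the restriction are removed from the pricing network and so carry zero weight. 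The weights therefore still sum to one.

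\textbf{The ``vice versa'' characterizations.} These are the elementary fact that a random variable has zero variance if and only if it is almost surely constant. Concretely,
$$v(g,t)=\sum_{q} y^*_q\Big(B_{qt}-\sum_{q'}B_{q't}y^*_{q'}\Big)^2,$$
which I will verify by expanding the square and using $\sum_q y^*_q=1$. This sum of nonnegative terms vanishes exactly when $B_{qt}$ equals the mean $\beta_{gt}$ for every $q$ with $y^*_q>0$. Since the $B_{qt}$ are nonnegative integers, $\beta_{gt}\in\Z_+$. The identical argument, with $A_{pgt}\in\{0,1\}$, gives $\alpha_{jgt}\in\{0,1\}$.

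\textbf{Cutting off the solution when $v(g,t)>0$.} By the characterization, the support contains two plans with distinct values $B_{q_1t}<B_{q_2t}$. I will choose $d=B_{q_1t}$, which lies in $\{0,\dots,J-1\}$ because $B_{q_2t}\le J$. Then
$$\sum_q \mathbbm 1[B_{qt}>d]y^*_q$$
is at least $y^*_{q_2}>0$ and at most $1-y^*_{q_1}<1$, so the point violates both branches of~\eqref{fire_demand_b_rule}. For $w(j,g,t)>0$, there are support routes with $A_{pgt}=0$ and with $A_{pgt}=1$, so $\sum_p \mathbbm 1[A_{pgt}=1]z^*_p\in(0,1)$ and both branches of~\eqref{crew_supply_b_rule} are violated.

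\textbf{Main obstacle.} The statement asserts infeasibility ``in each child node'' without fixing $d$, so the care needed is in specifying $d$. Taking $d$ equal to any value in $[\min B_{qt},\max B_{qt})$ over the support works, since then both the $y$-mass above $d$ and the mass at or below $d$ are positive. I will state this choice of $d$ explicitly as part of the rule.
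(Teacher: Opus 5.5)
Your argument is correct and is essentially the paper's: zero variance holds iff the distribution $(y^*_q)$ or $(z^*_p)$ is constant on its support. A positive-variance coordinate therefore admits a threshold $d$ that splits the support. The paper fixes this threshold in Lemma~\ref{lem:cut feasible} as $d=\lfloor\sum_q B_{qt}y^*_q\rfloor$, using that support values lie strictly on both sides of the mean, whereas you take a smaller support value. Both choices lie in $\{0,\ldots,J-1\}$ and violate both branches.

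The one imprecision is in your caveat. Columns inherited from the parent node can violate the new branching restriction, so for the weights to sum to one those columns must be dropped or fixed to zero in the restricted master; removing arcs from the pricing networks alone does not ensure this.
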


\paragraph{Dual-aware maximum-variance branching.}

We use dual prices to prioritize high-impact fires in the maximum-variance branching rule. We modify $v(g,t)$ and $w(j,g,t)$ as follows:
\begin{align*}
    v(g, t) &= \left(\sum_{q\in \calQ_g} (B_{qt}\rho^*_{gt})^2y^*_q\right) - \left(\sum_{q\in \calQ_g} B_{qt}\rho^*_{gt}y^*_q\right)^2\ \text{and}\ 
    w(j, g, t) = \left(\sum_{p\in \calP_j} (A_{pgt}\rho^*_{gt})^2z^*_p\right) - \left(\sum_{p\in \calP_j} A_{pgt}\rho^*_{gt}z^*_p\right)^2
\end{align*}

Proposition~\ref{prop:min_var} continues to hold under this branching rule, as long as $\boldsymbol\rho^* > \boldsymbol 0$. In practice, we may perturb $\boldsymbol \rho^*$ by a small positive $\boldsymbol \varepsilon$ to retain guarantees of exactness and finite convergence.

\subsection{Upper-bounding heuristic.}
\label{sec:heuristic}

We embed a heuristic into the branch-and-price-and-cut scheme (every 120 seconds, in our experiments) by restoring integrality with increasingly large demands (see Algorithm~\ref{alg:heuristic} in~\ref{app:heuristic}).

\subsection{Full branch-and-price-and-cut algorithm}

Algorithm~\ref{alg:branch-and-price-and-cut} summarizes the branch-and-price-and-cut algorithm, leveraging two-sided column generation (Section~\ref{section:CG}, Step 1.a), cutting planes (Section~\ref{section:cutting planes}, Step 1.b), branching rules (Section~\ref{section:branching}, Step 4), and the heuristic (Section~\ref{sec:heuristic}, Step 5). Theorem~\ref{thm:validity of branch-and-price-and-cut} establishes its exactness and finite convergence, enabled by upper and lower bounds, and by the branching rules (Proposition~\ref{prop:min_var}).

\begin{theorem} \label{thm:validity of branch-and-price-and-cut}
    Algorithm~\ref{alg:branch-and-price-and-cut} returns an optimal solution of Problem~\eqref{set partition} in finitely many iterations.
\end{theorem}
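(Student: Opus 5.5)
We argue finiteness and exactness separately, following the standard branch-and-bound template and relying on Proposition~\ref{prop:min_var} for the exactness of the branching rules.

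\textbf{Finiteness.} The argument has three layers.

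First, at each node the two-sided column generation (Algorithm~\ref{alg:2CG_verbose}) terminates finitely. The networks $\calA_j$ and $\calB_g$ are finite DAGs, so each $\calP_j$ and $\calQ_g$ is finite. A column with negative reduced cost cannot already be present in the restricted master problem, since columns in the RMP have nonnegative reduced cost at the optimal duals. Hence each iteration either adds a new column or stops. Infeasible RMPs are handled by the Farkas-ray pricing step, which also generates new columns or certifies infeasibility.

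Second, the cutting-plane loop at a node is finite. The robust cuts of Theorems~\ref{thm::GUB cover}--\ref{thm:general_gub_aware_cut} are indexed by finitely many combinatorial objects $(t,\calG_u,\calJ_u)$. For the CGLP~\eqref{general_gub_cglp}, we restrict to its finitely many extreme points, or equivalently cap the number of cut rounds. Robustness (Proposition~\ref{prop:robust}) ensures that pricing remains a shortest-path problem after cuts are added.

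Third, the tree is finite. Each branching disjunction~\eqref{fire_demand_b_rule}--\eqref{crew_supply_b_rule} fixes one of finitely many quantities: either $B_{qt}\le d$ versus $>d$ for $(g,t,d)$, or $A_{pgt}\in\{0,1\}$ for $(j,g,t)$. Along any root-to-leaf path each quantity can be branched on only finitely often before its admissible interval becomes a singleton. Hence the depth is bounded by $GT\cdot J+JGT$, and the tree is finite.

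\textbf{Exactness.} Every integer feasible solution survives in exactly one child, because the disjunctions partition the set of integer columns. Node LP values are therefore valid lower bounds, since cuts are valid for $\text{conv}(\calF)$, and pruning by bound or infeasibility never discards a strictly better integer solution.

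It remains to show that every leaf that is not pruned yields an integer solution of the same cost; this is the step I expect to be the main obstacle. Suppose no branching candidate has positive variance. By Proposition~\ref{prop:min_var}, all columns in the support share the same $B_{qt}=\beta_{gt}$ and the same $A_{pgt}=\alpha_{jgt}$. We then build an integer solution as follows.
\begin{itemize}
\item For each crew $j$, choose any support route $p_j$ and set $z_{p_j}=1$. Its crew assignments equal $\alpha_{jgt}$, so the linking constraints~\eqref{link set partition} still hold with the same left-hand side.
\item For each fire $g$, choose the support plan $q_g$ of minimal cost $d_q$. Since all support plans have identical demands, this keeps the linking constraints satisfied.
\end{itemize}
The resulting cost is at most $\sum c_pz^*_p+\sum d_qy^*_q$, because a minimum never exceeds a convex combination.

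There is a subtlety: crew costs $c_p$ may differ among routes with identical assignment vectors. We again take the cheapest support route, which gives cost no greater than the LP value. Combined with the lower-bound property, the integer solution attains the LP value and hence is optimal at that node. The cuts do not obstruct this argument, because the integer point lies in $\calF$.

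Finally, the heuristic only improves the incumbent and is therefore harmless. When the tree is exhausted, the incumbent equals the minimum over all leaves, which is the global optimum of Problem~\eqref{set partition}. The dual-aware variant inherits the same argument provided $\boldsymbol\rho^*>\boldsymbol 0$, which is ensured by the perturbation $\boldsymbol\varepsilon$.
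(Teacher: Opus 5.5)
Your proposal is correct and follows the paper's route. Finiteness of the column-generation and cut loops comes from the fact that no column or cut repeats and both families are finite. Finiteness of the tree comes from each of the finitely many disjunctions being usable at most once along a path: your interval-shrinking count is the paper's non-duplication argument, and both rely on Proposition~\ref{prop:min_var} to ensure the chosen disjunction actually splits the current support. Exactness follows from zero-variance leaves. You are more explicit than the paper on two points it leaves implicit: restricting the CGLP to its extreme points so the A-GUB family is finite, and turning a zero-variance fractional leaf into an integer solution of no greater cost by taking the cheapest support column.
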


\begin{algorithm} [h!]
\caption{\textsc{BranchPriceAndCut}.}\small
\label{alg:branch-and-price-and-cut}
\begin{algorithmic}
\item \textbf{Initialization:} node set $\calN = \{1\}$, counter $n \gets 0$, node $i \gets 1$, upper bound $\texttt{UB} \gets \infty$, lower bound $\texttt{LB} \gets 0$.
\item Iterate over Steps 1-6 until termination criterion.
\begin{itemize}
    \item[] \textbf{Step 1.} Increment $n\gets n+1$, and iterate between Steps 1.a and 1.b.
    \begin{itemize}
        \item[] \textbf{Step 1.a.} Run \textsc{TwoSidedColumnGeneration} (Algorithm~\ref{alg:2CG_verbose}). If $i=1$, initialize with $\calQ' = \emptyset$, $\calP' = \emptyset$, and $\calU = \emptyset$. Otherwise, initialize with $\calQ'$ and $\calP'$ from the parent node, and $\calU$ as the subset of cuts with positive dual price in parent solution. Store objective $c$ and solution $(\by, \bz, \boldsymbol \sigma, \boldsymbol \pi, \boldsymbol \rho)$.
        \item[] \textbf{Step 1.b.} For each time $t\in\calT$, add violated cuts to $\calU$, if any (Section~\ref{section:cutting planes}).
    \end{itemize}
    \item[] \textbf{Step 2.} Bounding: Update $\texttt{LB}$ as the lower bound among all unexplored leaves.
    \item[] \textbf{Step 3.} Pruning: (i) If $(\by, \bz)$ is integer and $c < \texttt{UB}$, update $\texttt{UB}\gets c$, store best solution $(\by, \bz)$, and go to step 6; (ii) if $c \geq \texttt{UB}$, prune node, go to step 5.
    \item[] \textbf{Step 4.} Branching: define two children nodes and append them to $\calN$ (Section~\ref{section:branching}).
    \item[] \textbf{Step 5.} [\texttt{OPTIONAL}] Run heuristic (Algorithm~\ref{alg:heuristic}). If cost is less than $\texttt{UB}$, update $\texttt{UB}$ and solution $(\by, \bz)$.
    \item[] \textbf{Step 6.} If $|\calN| = n$, \texttt{STOP}: return $(\by, \bz)$. Otherwise, set $i$ to an unexplored node, and go to Step 1.
\end{itemize}
\end{algorithmic}
\end{algorithm}

\section{A Data-driven Fire Spread Model}
\label{sec:causalML}

As mentioned earlier, the optimization model relies on a fire spread model, which requires us to define the state variable $S_{gt}$, the transition functions $S_{g,t+1}=f_{gt}(S_{gt},x_{gt})$ and the cost functions $d_{gt}(S_{gt},x_{gt})$ and $h_g(S_{g,T+1})$. We develop a double machine learning model to learn these functions from historical wildfire data (described in Section~\ref{sec:data}). To cast the problem in statistical learning terms, we introduce new notation in this section that supersedes earlier notation.

\subsection{Variable Definition and Feature Engineering}

Our dataset defines one observation per fire-day pair, indexed by $i=1,\cdots,n$ (i.e., a $(g,t)$ pair using earlier notation). By design, all variables only make use of information available in the current day (period $t$) to predict the following day ($t+1$).

\paragraph{Outcome variable:} forward-looking fire growth in acres (\texttt{next$\_$area}), defined as the difference between the area burned on day $t+1$ and the one on day $t$. This measure serves as a proxy for wildfire damage, while retaining a simple one-dimensional structure both in the predictive model (to mitigate variance) and in the prescriptive model (to avoid higher-dimensional time-state networks).

\paragraph{Treatment variable:} IHC-equivalent suppression effort per fire (as described in Section~\ref{sec:data}).

\paragraph{Covariates:} We define the following multi-dimensional feature representation to capture the characteristics of wildfire growth. Details and rationales can be found in Appendix~\ref{app:features}.
\begin{itemize}
    \item[--] State variable: current area (\texttt{area}) and momentum (\texttt{area$\_$diff}). These covariates map into the two-dimensional state variable $S_{gt}$ in the fire spread model.
    \item[--] Base data: time (days since ignition, month); ignition cause (4 categories: human, lightning, other, undetermined); suppression method (5 categories: monitor/manage, confine, full suppression, point/zone protection, multiple management strategy); point-of-origin fuel type (13 categories from the NFDRS fire-behavior fuel models, including timber, brush, grass, and logging-slash variants); fire behavior (4 categories: minimal, moderate, active, extreme); and granular fire behavior (16 attributes including creeping, smoldering, running, crowning, spotting variants, torching variants, backing, flanking, and uphill/wind-driven runs).
    \item[--] ERA5 data: daily statistics from 10 weather-related satellite channels characterizing wind, temperature, moisture, precipitation, radiation, evaporation, soil, and vegetation.
    \item[--] LandFire data: 24 geospatial variables, including 18 fuel indicators, 3 terrain variables, 2 vegetation cover fractions, and canopy base height mean. Unlike point-of-origin fuel types, these remote sensing data reflect the dynamic and heterogeneous fuel and terrain structure.
    \item[--] AlphaEarth embeddings: 64-dimensional vector providing a high-resolution spatial representation of land surface from multi-spectral satellite data (terrain complexity, vegetation, land use, surface properties, etc.) without explicit feature engineering.
\end{itemize}

\subsection{Double Machine Learning for Counterfactual Fire-Growth Prediction} 
\label{subsec:DML}

We seek to estimate a counterfactual treatment-response curve $\mu(x,\bX)=\E[Y(x)\mid \bX]$, where $Y$ is the outcome variable (\texttt{next\_area}), $Y(x)$ is the potential outcome under allocation $x$, $T$ is the observed treatment (crew assignments), and $\bX$ is the vector of covariates. This curve can be interpreted causally from observational data under three standard conditions: (i) consistency (the observed outcome equals the potential outcome under the assigned allocation); (ii) conditional ignorability (upon conditioning on $\bX$, decisions are not driven by unobserved factors that also affect the outcome); and (iii) overlap (each candidate allocation lies in the historical data support). The second condition motivates the inclusion of the extensive set of covariates, defined in collaboration with the US Forest Service. Under these conditions, $\mu(x,\bX)=\E[Y\mid \bX, T=x]$.

Because these assumptions are not directly testable, we interpret $\widehat{\mu}(x,\bX)$ as a counterfactual within the optimization model based on observational data. Recall that the central estimation challenge lies in the confounding between treatment and outcome variables---more crews were assigned to higher-risk fires historically. Thus, a naive machine learning model may lead to biased estimates of the impact of crews and may even---erroneously---learn that more crews lead to larger growth. We proceed via double machine learning (DML) to mitigate this observed confounding.

The DML approach orthogonalizes outcome and treatment variables with respect to observed covariates before estimating the residual treatment-response curve (see \cite{robinson1988root} in partially linear regression and \cite{chernozhukov2018double} in high-dimensional, non-linear settings). Formally, we let $g_0(\bX)=\E[Y\mid \bX]$ and $m_0(\bX)=\E[T\mid \bX]$, and define residuals $\Delta_Y=Y-g_0(\bX)$ and $\Delta_T=T-m_0(\bX)$. We model the relationship between residual treatment and outcome variables as:
\[
\Delta_Y=f_0(\Delta_T,\bX)+U, 
\qquad 
f_0(\Delta_T,\bX)=\E[\Delta_Y\mid \bX,\Delta_T],
\qquad 
\E[U\mid \bX,\Delta_T]=0.
\]

The DML procedure proceeds in two stages using cross-fitting for out-of-fold predictions, namely:

\begin{enumerate}

\item \textbf{Nuisance models:} We first estimate the nuisance functions $g_0(\bX)$ and $m_0(\bX)$ to isolate the effects of the covariates $\bX$ on the outcome $Y$ and the treatment $T$. This yields estimates $\widehat{g}_0(\bX)$ and $\widehat{m}_0(\bX)$ of the conditional expectations $\E[Y\mid\bX]$ and $\E[T\mid\bX]$. For both, we employ LightGBM models \citep{ke2017lightgbm} and implement 3-fold cross-fitting using GroupKFold partitioning by fire incident to prevent information leakage across days of the same fire. We then compute residuals $\Delta_Y = Y - \widehat{g}_0(\bX)$ and $\Delta_T = T - \widehat{m}_0(\bX)$ from out-of-fold predictions, which remove components of the outcome and treatment explained by observed covariates.

\item \textbf{Residual model:} The second-stage model estimates a function $\widehat{f}(\Delta_T,\bX)$ by regressing the outcome residual $\Delta_Y$ on the treatment residual $\Delta_T$ and the covariates $\bX$. This models the residual relationship between suppression effort and fire growth in our context. We employ LightGBM while ensuring that $\widehat{f}$ remains non-increasing in $\Delta_T$; all else equal, additional suppression effort shall not increase
next-day fire growth. One challenge is that, with high-dimensional covariates, the LightGBM trees may not split on the treatment residual, leading to a flat response curve. We mitigate this practical degeneracy by building a four-model ensemble with skip-zero averaging to retrieve an effective response curve (see~\ref{app:ensemble} for details).
\end{enumerate}

The outcome nuisance model captures expected next-day fire growth under historical dispatch patterns conditional on $\bX$, whereas the treatment nuisance model defines the historical-policy reference allocation. For each observation $i$, i.e., each fire-day pair in our context, and each possible decision $x$, i.e., each candidate crew allocation, we construct the counterfactual prediction
\[
\widehat{\mu}_i(x)=\widehat{g}_0(\bX_i)+\widehat{f}\bigl(x-\widehat{m}_0(\bX_i),\bX_i\bigr).
\]
These prediction curves provide the data-driven fire-growth transitions used to build the time-state networks in the optimization model.

\subsection{Predictive results}

We split the data into a training set (2015-2022) and a holdout test set (2023-2024) to mimic real-world deployment and prevent data leakage. All hyperparameters were tuned using 3-fold cross-validation. Tables~\ref{tab:nuisanceY}--~\ref{tab:secondStage} report performance on the test set. We vary the feature sets, starting with the base features from the SIT-209 data and then adding the ERA5 data, the LandFire data, and the AlphaEarth embeddings; for each, we test the model with area burned as the only endogenous variable and with area burned and momentum. As mentioned above, we use LGBM for the nuisance models; we report additional results in~\ref{app:predictiveresults} supporting that choice.

\begin{table}[h!]
\centering
\caption{First-stage performance for predicting the outcome variable (unit: acres).}
\label{tab:nuisanceY}
\resizebox{\textwidth}{!}{
\begin{tabular}{cccccccrrrr}
\toprule
\multicolumn{6}{c}{Feature Sources} & \multirow{2}{*}{Model} & \multirow{2}{*}{RMSE} & \multirow{2}{*}{MAE} & \multirow{2}{*}{$R^2$} & \multirow{2}{*}{CV RMSE} \\
\cmidrule(lr){1-6}
\#Feat & SIT209 & Momentum & ERA5 & LandFire & AlphaEarth &  &  &  &  &  \\
\midrule
45 & \checkmark &  &  &  &  & LGBM & 559 & 400 & 0.204 & 604 \\
46 & \checkmark & \checkmark &  &  &  & LGBM & 536 & 379 & 0.268 & 590 \\
\midrule
55 & \checkmark &  & \checkmark &  &  & LGBM & 558 & 393 & 0.208 & 604 \\
56 & \checkmark & \checkmark & \checkmark &  &  & LGBM & 534 & 375 & 0.272 & 588 \\
\midrule
79 & \checkmark &  & \checkmark & \checkmark &  & LGBM & 550 & 383 & 0.230 & 598 \\
80 & \checkmark & \checkmark & \checkmark & \checkmark &  & LGBM & 530 & 370 & 0.284 & 585 \\
\midrule
143 & \checkmark &  & \checkmark & \checkmark & \checkmark & LGBM & 549 & 387 & 0.231 & 597 \\
144 & \checkmark & \checkmark & \checkmark & \checkmark & \checkmark & LGBM & 526 & 367 & 0.295 & 582 \\
\bottomrule
\end{tabular}
}
\end{table}

\begin{table}[h!]
\centering
\caption{First-stage performance for predicting the treatment variable (unit: IHC-equivalent crews)}
\label{tab:nuisanceT}
\resizebox{\textwidth}{!}{
\begin{tabular}{cccccccrrrr}
\toprule
\multicolumn{6}{c}{Feature Sources} & \multirow{2}{*}{Model} & \multirow{2}{*}{RMSE} & \multirow{2}{*}{MAE} & \multirow{2}{*}{$R^2$} & \multirow{2}{*}{CV RMSE} \\
\cmidrule(lr){1-6}
\#Feat & SIT209 & Momentum & ERA5 & LandFire & AlphaEarth &  &  &  &  &  \\
\midrule
45 & \checkmark &  &  &  &  & LGBM & 6.13 & 2.40 & 0.457 & 5.90 \\
46 & \checkmark & \checkmark &  &  &  & LGBM & 6.10 & 2.33 & 0.462 & 5.90 \\
\midrule
55 & \checkmark &  & \checkmark &  &  & LGBM & 4.84 & 2.06 & 0.661 & 5.81 \\
56 & \checkmark & \checkmark & \checkmark &  &  & LGBM & 4.95 & 2.06 & 0.646 & 5.76 \\
\midrule
79 & \checkmark &  & \checkmark & \checkmark &  & LGBM & 5.21 & 2.25 & 0.608 & 6.08 \\
80 & \checkmark & \checkmark & \checkmark & \checkmark &  & LGBM & 4.90 & 2.19 & 0.653 & 6.13 \\
\midrule
143 & \checkmark &  & \checkmark & \checkmark & \checkmark & LGBM & 4.79 & 2.08 & 0.668 & 5.94 \\
144 & \checkmark & \checkmark & \checkmark & \checkmark & \checkmark & LGBM & 4.94 & 2.17 & 0.649 & 5.97 \\
\bottomrule
\end{tabular}
}
\end{table}

\begin{table}[h!]
\centering
\caption{Second-stage DML performance for predicting the treatment effect (unit: acres).}
\label{tab:secondStage}
\resizebox{\textwidth}{!}{
\begin{tabular}{cccccccrrrr}
\toprule
\multicolumn{6}{c}{Feature Sources} & \multirow{2}{*}{Model} & \multirow{2}{*}{RMSE} & \multirow{2}{*}{MAE} & \multirow{2}{*}{$R^2$} & \multirow{2}{*}{CV RMSE} \\
\cmidrule(lr){1-6}
\#Feat & SIT209 & Momentum & ERA5 & LandFire & AlphaEarth &  &  &  &  &  \\
\midrule
45 & \checkmark &  &  &  &  & LGBM & 563 & 402 & 0.192 & 624 \\
46 & \checkmark & \checkmark &  &  &  & LGBM & 539 & 380 & 0.260 & 603 \\
\midrule
55 & \checkmark &  & \checkmark &  &  & LGBM & 562 & 396 & 0.196 & 618 \\
56 & \checkmark & \checkmark & \checkmark &  &  & LGBM & 536 & 375 & 0.269 & 601 \\
\midrule
79 & \checkmark &  & \checkmark & \checkmark &  & LGBM & 553 & 385 & 0.222 & 611 \\
80 & \checkmark & \checkmark & \checkmark & \checkmark &  & LGBM & 533 & 372 & 0.277 & 596 \\
\midrule
143 & \checkmark &  & \checkmark & \checkmark & \checkmark & LGBM & 549 & 383 & 0.232 & 609 \\
144 & \checkmark & \checkmark & \checkmark & \checkmark & \checkmark & LGBM & 529 & 368 & 0.286 & 594 \\
\bottomrule
\end{tabular}
}
\end{table}

In terms of magnitude, the $R^2$ is lower for the outcome variable than for the treatment variable, reflecting the complexity of capturing wildfire behavior. For the outcome variable, predictive performance improves as features are added, but the gains remain modest, indicating the inherent difficulty of predicting fire growth. In comparison, the momentum variable results in larger gains. These results justify the augmentation of the fire state variable to include both area and momentum, despite the added computational complexity in the optimization problem. For the treatment variable, the nuisance model benefits from weather data but again improvements become smaller thereafter. Moreover, the model benefits mildly from the momentum variable. These results suggest that dispatch decisions were primarily based on the current fire area and weather forecasts, which we will use to define a benchmark in Section~\ref{sec:results}. These observations are corroborated by the grouped SHAP plots in Figure~\ref{fig:SHAPgrouped}; full SHAP plots are relegated to~\ref{app:predictiveresults} \citep{lundberg2017unified}.

\begin{figure}[h!]
    \centering
    \begin{subfigure}[t]{0.48\linewidth}
        \centering
        \includegraphics[width=\linewidth]{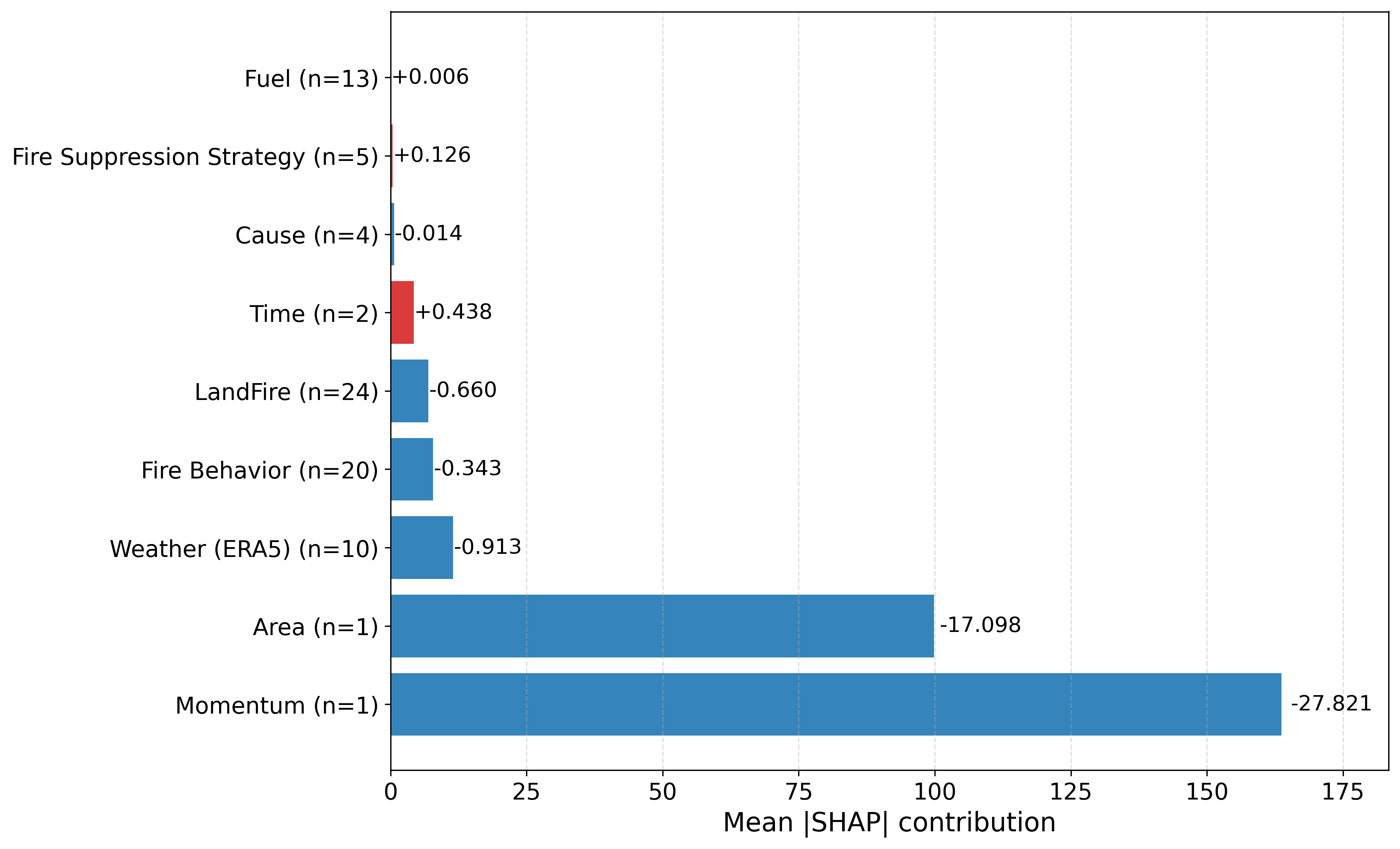}
        \caption{Model predicting outcome variable.}
        \label{fig:SHAPgroupedY}
    \end{subfigure}
    \hfill
    \begin{subfigure}[t]{0.48\linewidth}
        \centering
        \includegraphics[width=\linewidth]{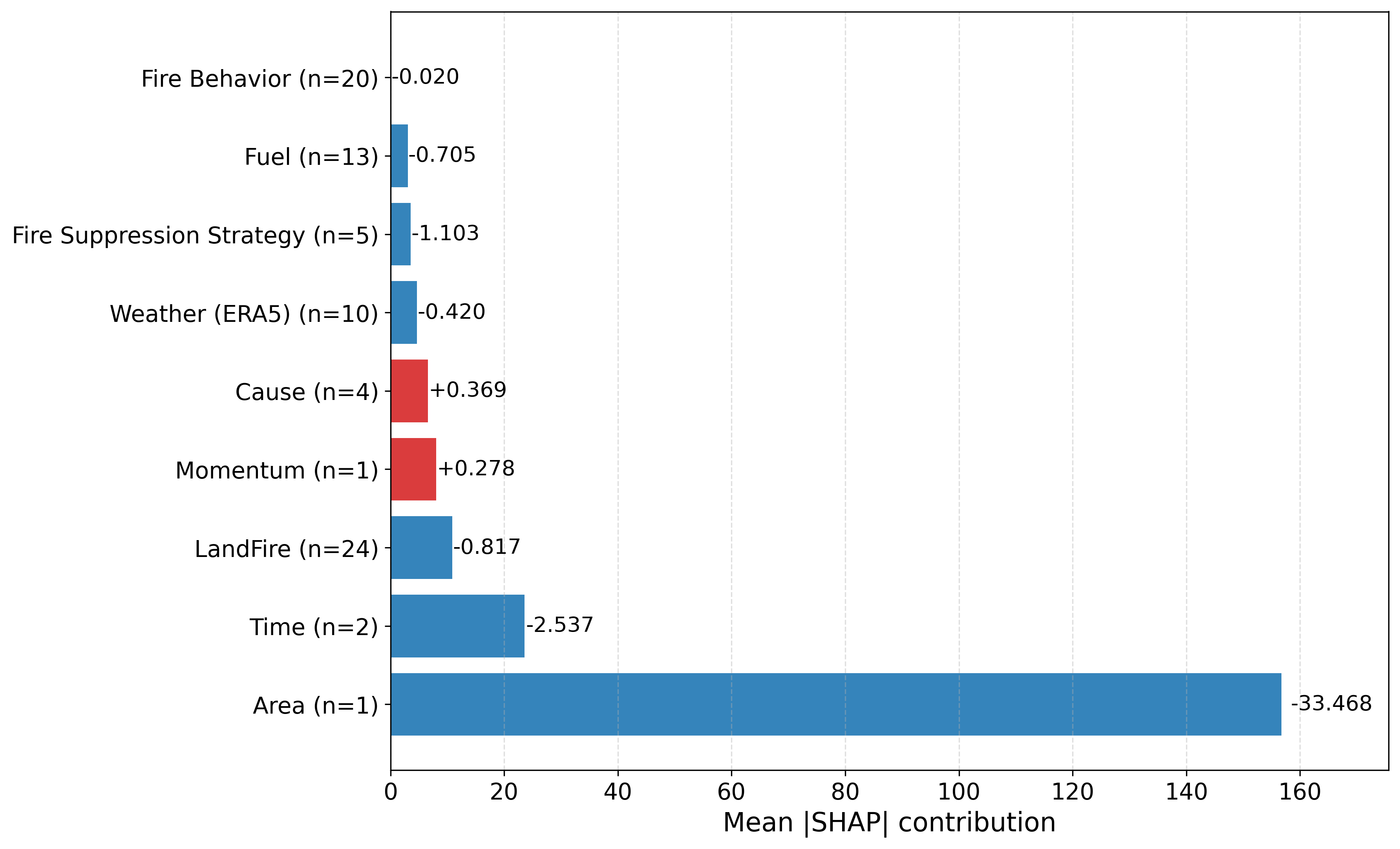}
        \caption{Model predicting treatment variable.}
        \label{fig:SHAPgroupedT}
    \end{subfigure}
    \caption{Grouped SHAP plots (SHapley Additive exPlanations) for the first-stage models in the DML trained on SIT-209, ERA5, LandFire, and AlphaEarth.}
    \label{fig:SHAPgrouped}
    \vspace{-18pt}
\end{figure}

Figure~\ref{fig:cate_curve} visualizes the average relationship between the treatment variable (IHC-equivalent crews) and outcome variable (daily fire growth). These results are consistent with the presence of confounding bias in baseline machine learning models. Ordinary least squares regression (OLS) infers a positive relationship between crew assignments and fire growth. The single-stage LightGBM model learns a non-monotonic response, where again, more crews can lead to larger fires. In contrast, the DML learns a smooth, decreasing relationship with diminishing returns, consistent with operational intuition. Ultimately, the DML model exhibits strong out-of-sample performance (the quality of fit in Table~\ref{tab:secondStage} is consistent with that of single-stage ML models) and yields a treatment-response curve that retains a positive impact of additional crews on wildfire suppression, with diminishing returns. Note that some bias may remain because the orthogonalization procedure may not eliminate all unobserved confounding, measurement error, or interference across fires. Accordingly, the treatment-response curves should be interpreted as an observational estimate of suppression effects used to construct counterfactual wildfire propagation dynamics.

\begin{figure}[h!]
    \centering
    \includegraphics[width=0.6\textwidth]{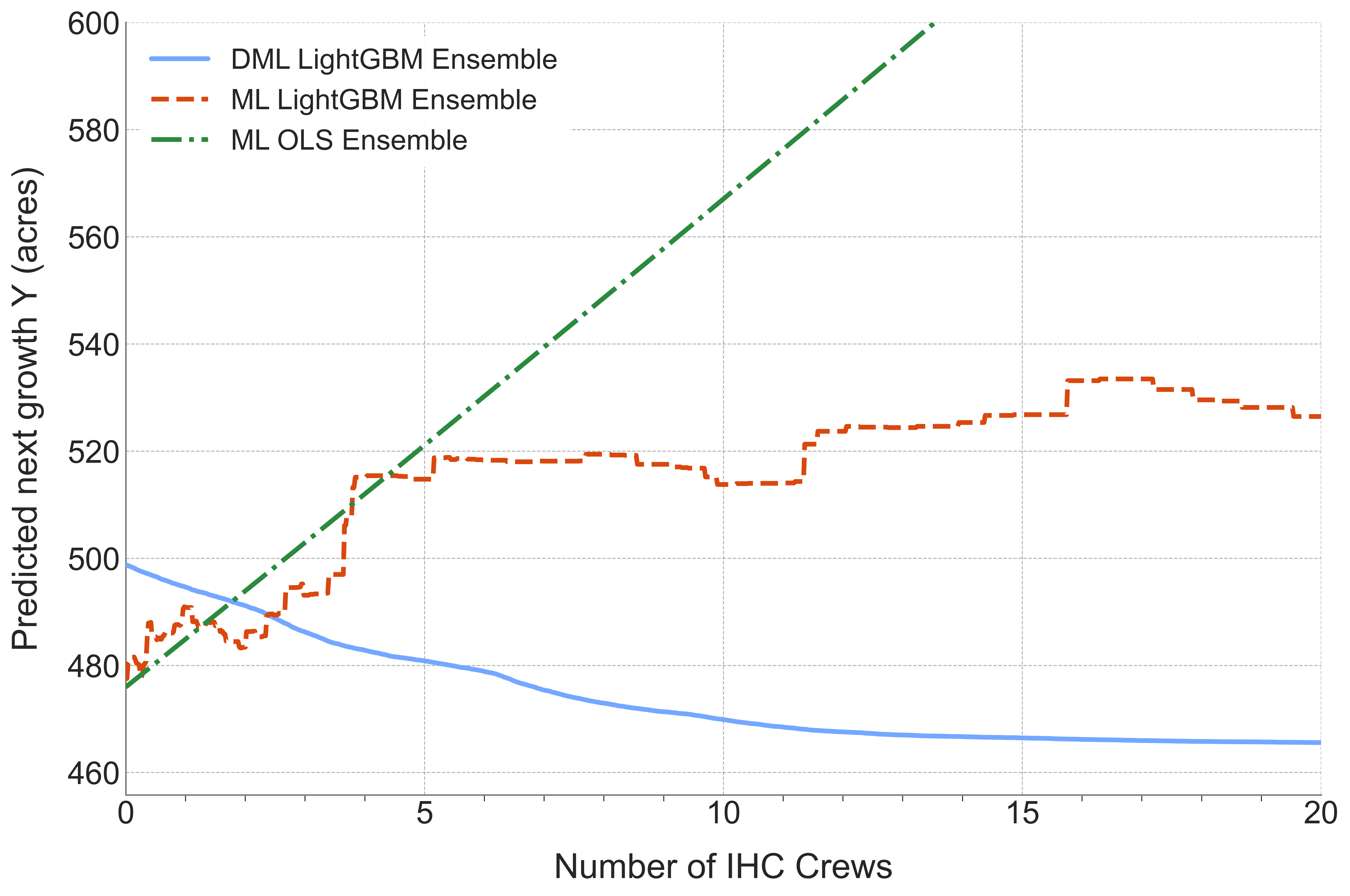}
    \caption{Average treatment-response curve with machine learning (ML) and double machine learning (DML).}
    \label{fig:cate_curve}
\end{figure}

\subsubsection*{Fire spread model.} 

We use these predictions to build the fire spread model and the corresponding time-state network, illustrated in Figure~\ref{fig:fire_state_network} for one fire. Recall that the two-dimensional state variable comprises area burned and momentum. We discretize the state space using a piecewise uniform grid to provide higher resolution for the smaller---and more common---fires. We then iterate through each discretized state and each possible crew assignment, and derive the next state using the predictive model. This defines the transition arcs $a\in\calB_g$, the corresponding labels $x_a$, and the cost parameters $d_a$. Details on this procedure are provided in~\ref{app:firenetwork}. Altogether, the predictive model yields a high-dimensional fire-state model: the worst-case number of possible discretized states is over 2 billion (51,031 possible values both for area and momentum), and the average number of states in our data-driven networks is 5,690.

\begin{figure}[h!]
    \centering
    \includegraphics[width=.8\textwidth]{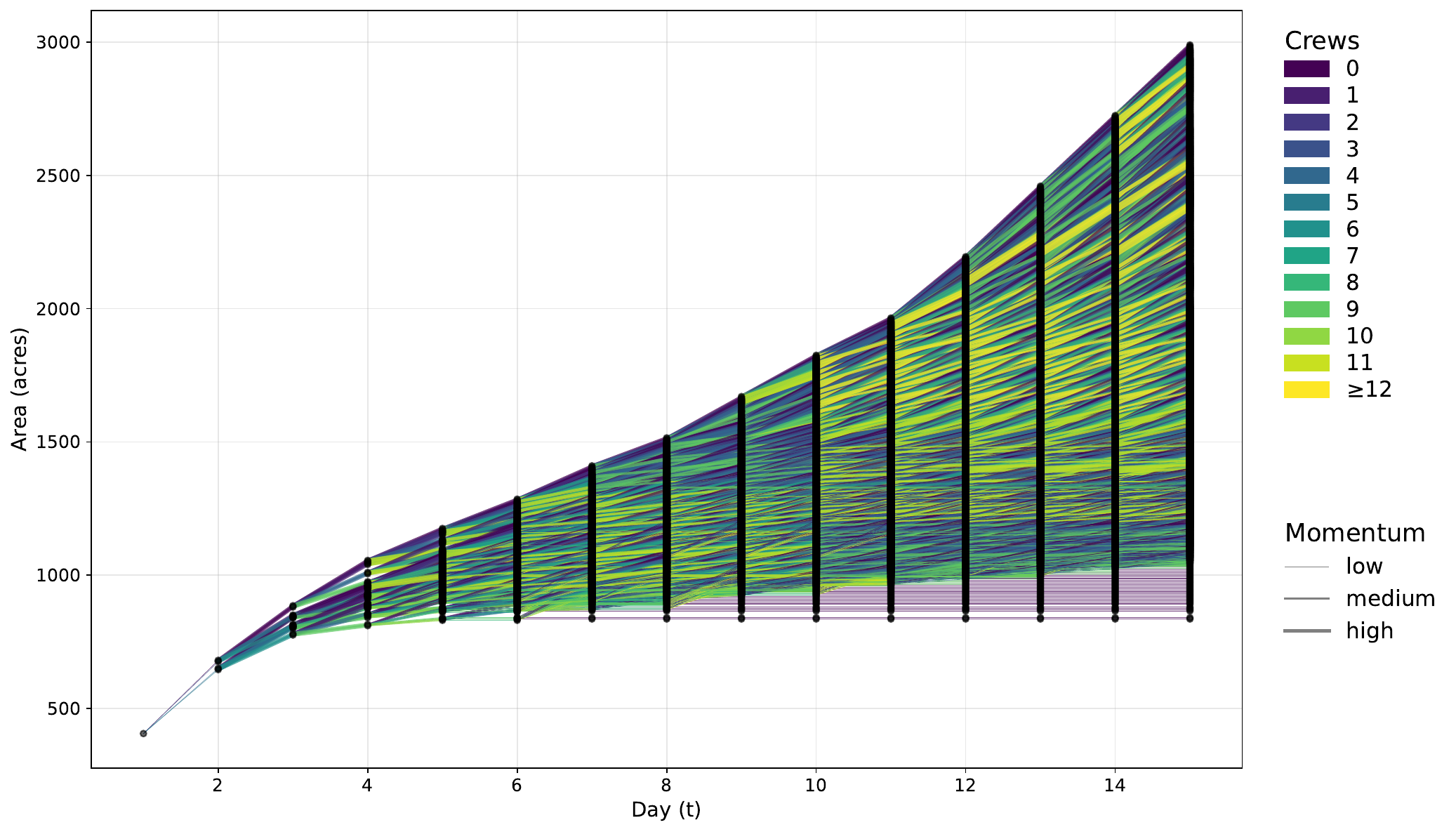}
    \caption{Example time-state network for the Stead fire in the Southern GACC on August 15, 2023. y-axis: one dimension of the fire state (area); x-axis: 14-day horizon; color gradient: number of crews required for each state transition; line width: second dimension of the fire state (momentum).}
    \label{fig:fire_state_network}
\end{figure}

\section{Optimization results}
\label{sec:results}

We evaluate our algorithm on real-world data. We build time-space-rest networks by geocoding crew stations using Google Maps and computing travel times using OpenStreetMap. We use the fire spread model obtained with our DML methodology (Section~\ref{sec:causalML}, Figure~\ref{fig:fire_state_network}). Our experiments span all fires in the contiguous United States during the Summers of 2023 and 2024, over six 14-day time periods from June through August starting on the 1st and 15th of each month. We also use the simple linear model of wildfire spread from \cite{wei2015chance} (see~\eqref{subsec:erinmodel}) in some of our experiments to establish the scalability of the methodology in a controlled environment from the literature. All models are solved with Gurobi v13.0.1 \citep{gurobi2026} using JuMP v1.30.0 \citep{dunning2017jump} in Julia v1.11.9  on a laptop with an Apple M5 Pro (15-core CPU, 24 GB unified memory). All instances and code are available online to enable replication.\footnote{Link blinded for double-blind review.}

\subsection{Scalability of the optimization methodology}
\label{subsec:scalability}

To evaluate the branch-and-price-and-cut algorithm, Table~\ref{table:value_cuts_branching_heuristic} considers two instances and incorporates each algorithmic component one at a time---cutting planes, the heuristic, and the branching rules. It reports computational times (up to a 20-minute limit), the upper and lower bounds (normalized to the best found solution), the number of nodes, and the number of columns generated.

\begin{table}[h!]
\caption{Performance of the branch-and-price-and-cut components (fire model from \cite{wei2015chance}).}\label{table:value_cuts_branching_heuristic}
\centering\renewcommand{\arraystretch}{1.0}
\small
\begin{tabular}{ccc ccccc ccccc}
\toprule
\multicolumn{3}{c}{Algorithm} & \multicolumn{5}{c}{Medium instance: 20 crews, 6 fires} & \multicolumn{5}{c}{Large instance: 30 crews, 9 fires}\\
\cmidrule(lr){1-3}\cmidrule(lr){4-8}\cmidrule(lr){9-13}
Cuts & Heur. & Branch. & CPU & Opt. gap & Sol. & Nodes & Columns & CPU & Opt. gap & Sol. & Nodes & Columns\\
\midrule
No & No & MF & 1,200 & --- & --- & 4,757 & 600,982 & 1,200 & --- & --- & 1,832 & 410,675 \\
 &  & MV & 1,200 & --- & --- & 4,804 & 606,250 & 1,200 & --- & --- & 1,778 & 400,229 \\
  &  & D-MV & 288 & 0.00\% & 1.00 & 1,439 & 152,280 & 1,200 & --- & --- & 1,902 & 493,830 \\
  & Yes & MF & 1,200 & 1.08\% & 1.01 & 4,712 & 623,146 & 1,200 & 4.58\% & 1.05 & 1,681 & 378,326 \\
 &  & MV & 1,200 & 0.98\% & 1.01 & 4,573 & 617,885 & 1,200 & 4.67\% & 1.05 & 1,663 & 376,342 \\
 &  & D-MV & 313 & 0.00\% & 1.00 & 1,437 & 170,650 & 1,200 & 2.84\% & 1.04 & 1,682 & 433,931 \\
Yes & No & MF & 1,200 & --- & --- & 660 & 134,921 & 1,200 & --- & --- & 101 & 55,575 \\
 &  & MV & 1,200 & --- & --- & 635 & 126,611 & 1,200 & --- & --- & 98 & 53,639 \\
 &  & D-MV & 229 & 0.00\% & 1.00 & 83 & 24,118 & 1,200 & --- & --- & 135 & 66,109 \\
 & Yes & MF & 1,200 & 0.65\% & 1.01 & 642 & 137,101 & 1,200 & 3.57\% & 1.04 & 52 & 58,285 \\
 &  & MV & 1,200 & 0.24\% & 1.01 & 683 & 136,763 & 1,200 & 3.95\% & 1.04 & 50 & 56,577 \\
 &  & D-MV & 179 & 0.00\% & 1.00 & 49 & 17,415 & 1,200 & 2.59\% & 1.04 & 62 & 55,285 \\
\bottomrule
\end{tabular}
\begin{tablenotes}
\item\scriptsize Branching rules: MF = most fractional, MV = maximum variance, D-MV = dual-aware maximum variance
\end{tablenotes}
\end{table}

Note the significant impact of our branch-and-price-and-cut methodology. Whereas a baseline branch-and-price approach fails to even return feasible solutions, the proposed algorithm returns the optimal solution in minutes in medium instances and derives high-quality solutions in larger instances. These improvements are enabled by all three components of the algorithm:
\begin{itemize}
    \item[--] Cutting planes enable significant speedups and solution gains. In the medium instance, they reduce computational times by 43\%, from 313 to 179 seconds. In the large one, they reduce the optimality gap from 2.84\% to 2.59\%; the difference is even larger with weaker branching strategies. We report results on the bound improvements from the A-GUB cuts in~\ref{app:cuts}.
    \item[--] The upper-bounding heuristic is critical to derive a feasible solution and accelerate the branch-and-price-and-cut algorithm. In the largest example, the heuristic enables the algorithm to derive a solution within 2.59\% of the optimum, whereas the algorithm never finds a feasible solution otherwise. These benefits come at moderate computational costs, since the heuristic is applied sporadically (every 120 seconds, in our experiments).
    \item[--] The dual-aware maximum-variance branching strategy considerably enhances the scalability of the branch-and-price-and-cut algorithm. In the medium instance, it enables convergence to the optimal solution, whereas the other two branching strategies leave an optimality gap. In the largest instance, it reduces the optimality gap from 3.5--4.7\% to 2.6--2.8\%. These results underscore the benefit of leveraging dual information in the branching rules.
\end{itemize}

Finally, Tables~\ref{tab:scalability} and~\ref{tab:bpc_comparison} assess the scalability of the branch-and-price-and-cut algorithm against (i) a polynomial formulation on the time-expanded networks (Problem~\eqref{model::full natural}); (ii) a ``2CG + IP'' benchmark that runs two-sided column generation with A-GUB cuts at the root node (Step 1 in Algorithm~\ref{alg:branch-and-price-and-cut}) and solves the master problem with integrality constraints; and (iii) a ``root node + heuristic'' that adds the upper-bounding heuristic (Steps 1 and 5 in Algorithm~\ref{alg:branch-and-price-and-cut}). Table~\ref{tab:scalability} uses the controlled setup from \cite{wei2015chance}, whereas Table~\ref{tab:bpc_comparison} uses the data-driven setup from Section~\ref{sec:causalML}. Recall that resource allocation is typically managed within each GACC, with up to 20-30 IHCs; at the same time, inter-GACC coordination can be instrumental to enable nation-wide resource sharing during the most intense fire seasons. Thus, Table~\ref{tab:bpc_comparison} tests the algorithm's scalability to the full contiguous United States (potentially divided into two halves for tractability purposes).

\begin{table}[h!]
\centering
  \caption{Performance of branch-and-price-and-cut algorithm (fire model from \cite{wei2015chance}).}
\small
\begin{tabular}{cccccccccccccc}
\toprule
  && \multicolumn{3}{c}{Polynomial} & \multicolumn{3}{c}{2CG + IP} & \multicolumn{3}{c}{Root + Heuristic} & \multicolumn{3}{c}{Branch-Price-Cut} \\\cmidrule(lr){3-5}\cmidrule(lr){6-8}\cmidrule(lr){9-11}\cmidrule(lr){12-14}
Crews & Fires & UB & CPU & Gap & UB & CPU & Gap & UB & CPU & Gap & UB & CPU & Gap \\
\hline
10 & 3  & 1.44   & 3    & 0.00\%   & 1.44   & 9    & 0.12\%   & 1.44 & 9   & 0.09\%  & 1.44 & 11   & 0.00\%  \\
20 & 6  & 2.30   & 1201 & 1.09\%   & 2.95   & 45   & 30.95\%  & 2.32 & 34  & 2.92\%  & 2.29 & 305  & 0.00\%  \\
30 & 9  & 4.98   & 1205 & 9.24\%   & 9.05   & 1200 & 102.27\% & 4.73 & 127 & 5.68\%  & 4.69 & 1200 & 2.29\%  \\
40 & 12 & 6.19   & 1208 & 29.07\%  & 10.69  & 1200 & 125.03\% & 5.02 & 180 & 5.77\%  & 4.99 & 1200 & 3.85\%  \\
50 & 15 & 7.85   & 1217 & 35.55\%  & 12.21  & 1200 & 112.53\% & 6.42 & 228 & 11.67\% & 5.96 & 1200 & 2.76\%  \\
70 & 21 & --- & 1213 & --- & --- & 1200 & --- & 7.87 & 444 & 12.83\% & 7.43 & 1200 & 6.09\%  \\
\bottomrule
\end{tabular}
\label{tab:scalability}
\end{table}

\begin{table}[ht]
  \centering
  \small
  \caption{Performance of branch-and-price-and-cut algorithm (data-driven fire model).}
  \label{tab:bpc_comparison}
  \setlength{\tabcolsep}{4pt}
  \begin{tabular}{llrrrrrrrr}
  \toprule
  &&&&\multicolumn{2}{c}{Polynomial}&\multicolumn{2}{c}{Root + Heuristic}&\multicolumn{2}{c}{Branch-Price-Cut}\\\cmidrule(lr){5-6}\cmidrule(lr){7-8}\cmidrule(lr){9-10}
  Date & GACCs & Crews & Fires & Sol. & Gap & Sol. & Gap & Sol. & Gap\\
  \midrule
  06/01/2023 & All      & 112 &  42 & 225,915 & 0.30\% & 226,229 & 0.50\% & 226,139 & 0.46\% \\
  06/15/2023 & All      & 112 &  54 & --- & --- & 333,218 & 1.02\% & 331,818 & 0.60\% \\
  07/01/2023 & East     &  41 &  16 & 34,290 & 0.18\% & 34,430 & 0.59\% & 34,385 & 0.46\% \\
  07/01/2023 & West     &  71 &  55 & --- & --- & 263,297 & 0.62\% & 262,927 & 0.48\% \\
  07/15/2023 & East     &  41 &  28 & 124,883 & 0.34\% & 125,513 & 1.23\% & 125,283 & 1.05\% \\
  07/15/2023 & West     &  71 &  89 & --- & --- & 459,633 & 1.72\% & 456,968 & 1.15\% \\
  08/01/2023 & East     &  41 &  52 & 243,523 & 0.19\% & 243,963 & 0.36\% & 243,673 & 0.24\% \\
  08/01/2023 & West     &  71 & 113 & --- & --- & 668,210 & 0.66\% & 666,720 & 0.44\% \\
  08/15/2023 & East     &  41 &  59 & 317,481 & 0.34\% & 319,136 & 0.85\% & 318,896 & 0.78\% \\
  08/15/2023 & West     &  71 & 109 & --- & --- & 585,025 & 0.92\% & 583,305 & 0.62\% \\
  \midrule
  06/01/2024 & All      & 112 &  50 & 299,945 & 0.28\% & 300,815 & 0.60\% & 300,220 & 0.40\% \\
  06/15/2024 & All      & 112 &  75 & 469,112 & 0.62\% & 470,017 & 1.26\% & 469,067 & 1.06\% \\
  07/01/2024 & East     &  41 &  43 & 272,815 & 0.30\% & 273,620 & 0.65\% & 273,095 & 0.46\% \\
  07/01/2024 & West     &  71 &  73 & 412,623 & 0.47\% & 412,824 & 0.60\% & 412,056 & 0.41\% \\
  07/15/2024 & East     &  41 & 112 & 994,915 & 0.14\% & 999,355 & 0.58\% & 996,050 & 0.25\% \\
  07/15/2024 & West     &  71 & 108 & 705,149 & 0.39\% & 706,169 & 0.58\% & 705,004 & 0.41\% \\
  08/01/2024 & East     &  41 &  95 & 2,041,898 & 0.09\% & 2,044,534 & 0.22\% & 2,042,372 & 0.11\% \\
  08/01/2024 & West     &  71 & 105 & 829,768 & 0.46\% & 830,446 & 0.54\% & 829,571 & 0.44\% \\
  08/15/2024 & East     &  41 &  75 & 1,644,491 & 0.07\% & 1,646,108 & 0.14\% & 1,645,018 & 0.07\% \\
  08/15/2024 & West     &  71 &  97 & --- & --- & 1,018,237 & 0.30\% & 1,016,722 & 0.15\% \\
  \bottomrule
  \end{tabular}
\end{table}

The first observation is that the branch-and-price-and-cut methodology significantly outperforms the polynomial formulation. With the linear fire model of \cite{wei2015chance}, the polynomial formulation leaves a large optimality gap in medium instances and does not find a feasible solution in the largest ones. With the data-driven fire model, the arc-based polynomial benchmark fails to find a feasible solution within the one-hour time limit on 6 of the 20 instances, whereas the branch-and-price-and-cut algorithm solves all instances within an optimality gap of up to 1.2\%. Next, the branch-and-price-and-cut methodology is also critical to retrieve high-quality solutions and tight optimality gaps, as compared to the simpler column generation benchmarks. The 2CG + IP heuristic achieves poor performance due to non-binary fire demands, leading to a fractional solution that combines poor-quality integer suppression plans (as discussed in Section~\ref{section:cutting planes}). The upper-bounding heuristic enables much stronger solutions by generating higher-quality integer suppression plans. Still, the heuristic is not sufficient to achieve the highest-quality solutions, and the full branch-and-price-and-cut algorithm results in significant improvements in optimality gaps, down to 2.29--6.09\% with the linear fire model and to 0.07--1.15\% with the data-driven fire model.

These results demonstrate that our methodology quickly derives good solutions through two-sided column generation, A-GUB cuts, and the upper-bounding heuristic, while the full branch-and-price-and-cut algorithm yields high-quality solutions with strong optimality bounds. Ultimately, the methodology can scale to large-scale instances arising in practice to support intra-GACC resource allocation as well as inter-GACC coordination across the full United States.

\subsection{Performance of overall predictive and prescriptive methodology}
\label{subsec:benefits}

We now compare the performance of the optimization solution to four greedy priority-based baselines that assign available crews to wildfires based on easily accessible scoring mechanisms: (i) random assignment, which allocates crews uniformly at random; (ii) a distance-based baseline, which prioritizes closer fires based on their proximity to available crews; (iii) an area-based baseline, which prioritizes larger fires based on their current area; and (iv) an impact-based baseline, which prioritizes fires based on the expected burned area reduction from the next crew. The distance-based baseline takes a routing perspective but ignores wildfire impact. As discussed in Figure~\ref{fig:SHAPgrouped}, area is the primary determinant of historical crew assignments, which motivates the area-based baseline as a practical benchmark. Finally, the impact-based baseline incorporates the DML predictions over a single time period. In comparison, the optimization solution incorporates the longer-term dynamics of the fires from the DML predictions as well as crew routing operations. For comparability, all solutions use the same travel-time discretization, crew-rest constraints, and fire-state updates.

Table~\ref{tab:summary} shows summary results of the optimization algorithm and all baselines, across all GACCs and six 14-day time periods in each summer season (detailed results are in Tables~\ref{tab:results_2023} and~\ref{tab:results_2024}). Results are reported relative to a no-crew baseline under the learned fire-spread model, both in absolute and relative terms, and relative to the optimization solution. The table considers both the intra-GACC-allocation instances, which mirrors current operations, and the inter-GACC-coordination instances, which estimate the impact of nation-wide resource sharing at the NICC level. Further description of the experiments and baselines is available in Appendix~\ref{app:optimizationresults}.

\begin{table}[h!]
\centering
\caption{Performance assessment across all instances in 2023--2024.}
\label{tab:summary}
\small
\begin{threeparttable}
\begin{tabular}{llcccccc}
\toprule
&&\multicolumn{3}{c}{Intra-GACC allocation}&\multicolumn{3}{c}{Inter-GACC coordination}\\\cmidrule(lr){3-5}\cmidrule(lr){6-8}
Year & Method & Acres saved & \% saved & Multiplier & Acres saved & \% saved & Multiplier\\
\midrule
2023 & Random assignment & 42,569 & 1.24\% & 3.48 & 61,369 & 1.79\% & 3.00 \\
 & Distance-based baseline & 64,572 & 1.88\% & 2.29 & 90,054 & 2.62\% & 2.04 \\
 & Area-based baseline & 51,894 & 1.51\% & 2.86 & 65,559 & 1.91\% & 2.81 \\
 & Impact-based baseline & 71,620 & 2.09\% & 2.07 & 106,817 & 3.11\% & 1.72 \\
 & Optimization & \textbf{148,185} & 4.31\% & 1.00 & \textbf{184,115} & 5.36\% & 1.00 \\
\midrule
2024 & Random assignment & 52,709 & 0.59\% & 3.49 & 64,884 & 0.73\% & 3.02 \\
 & Distance-based baseline & 87,486 & 0.98\% & 2.10 & 104,233 & 1.17\% & 1.88 \\
 & Area-based baseline & 55,011 & 0.62\% & 3.34 & 58,415 & 0.66\% & 3.35 \\
 & Impact-based baseline & 98,515 & 1.11\% & 1.87 & 119,383 & 1.34\% & 1.64 \\
 & Optimization & \textbf{183,924} & 2.07\% & 1.00 & \textbf{195,976} & 2.21\% & 1.00 \\
\bottomrule
\end{tabular}
\begin{tablenotes}
\item Acres saved relative to a no-crew baseline. Multiplier: acres saved by the optimization solution divided by acres saved by the corresponding benchmark, indicating the benefits of optimization.
\end{tablenotes}
\end{threeparttable}
\end{table}

These results indicate that, under the learned fire-spread model, the optimization methodology can yield significant savings as compared to benchmark allocation policies. The optimized solutions are predicted to save an estimated $332,109$ acres combined in 2023 and 2024 relative to a no-crew baseline. This amounts to more than a twofold improvement from the distance-based and area-based benchmarks. These results confirm that wildfire suppression is not solely based on routing objectives but also on triage objectives to prioritize higher-risk fires. The results also show that current burned area is an imperfect proxy of future risk by ignoring wildfire dynamics and environmental factors. In comparison, the impact-based baseline enhances the effectiveness of suppression activities by incorporating one-step predictions from the DML model. Nonetheless, the optimization solution achieves significant improvements, estimated at 87--107\%; these benefits are also consistent across GACCs and over time (see Tables~\ref{tab:results_2023} and~\ref{tab:results_2024}). In terms of magnitude, these improvements amount to an estimated 80,000 acres saved per summer---about 1.6x Acadia National Park or 60,000 football fields. While these absolute numbers depend on the underlying fire spread model, the relative benefits suggest that the optimization of crew assignments can significantly enhance the effectiveness of suppression operations.

Figure~\ref{fig:fire_and_crew} visualizes the cumulative burned area---again, as predicted by the DML model---alongside crew assignments across the 7 wildfires within the Northern California GACC from July 1 to July 15, 2024. As expected, it is generally beneficial to engage fires early to contain their spread, as seen with Thompson and Bogus. Still, the solution illustrates the complexities of allocating crews across simultaneous fires and over time, by prioritizing some fires while others receive fewer or no crews, and by re-allocating crews to work on high-impact fires at key moments (e.g. Grande on day 8). Altogether, this visualization demonstrates how the model can aid real-world decision making by guiding which fires to prioritize and when.

\begin{figure}[ht]
    \centering
    \begin{subfigure}[t]{0.48\linewidth}
        \centering
        \includegraphics[width=\linewidth]{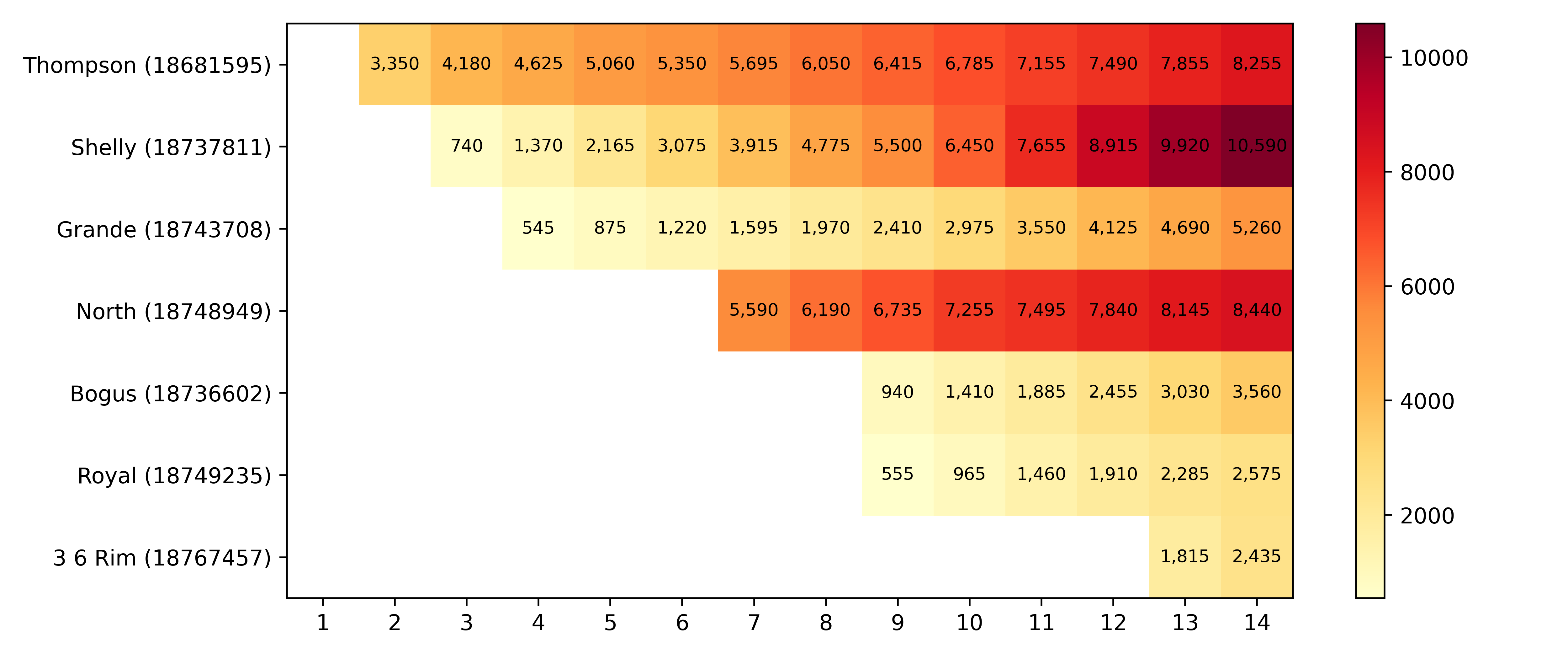}
        \caption{Burned area (acres) per fire per day.}
        \label{fig:fire_progression}
    \end{subfigure}
    \hfill
    \begin{subfigure}[t]{0.48\linewidth}
        \centering
        \includegraphics[width=\linewidth]{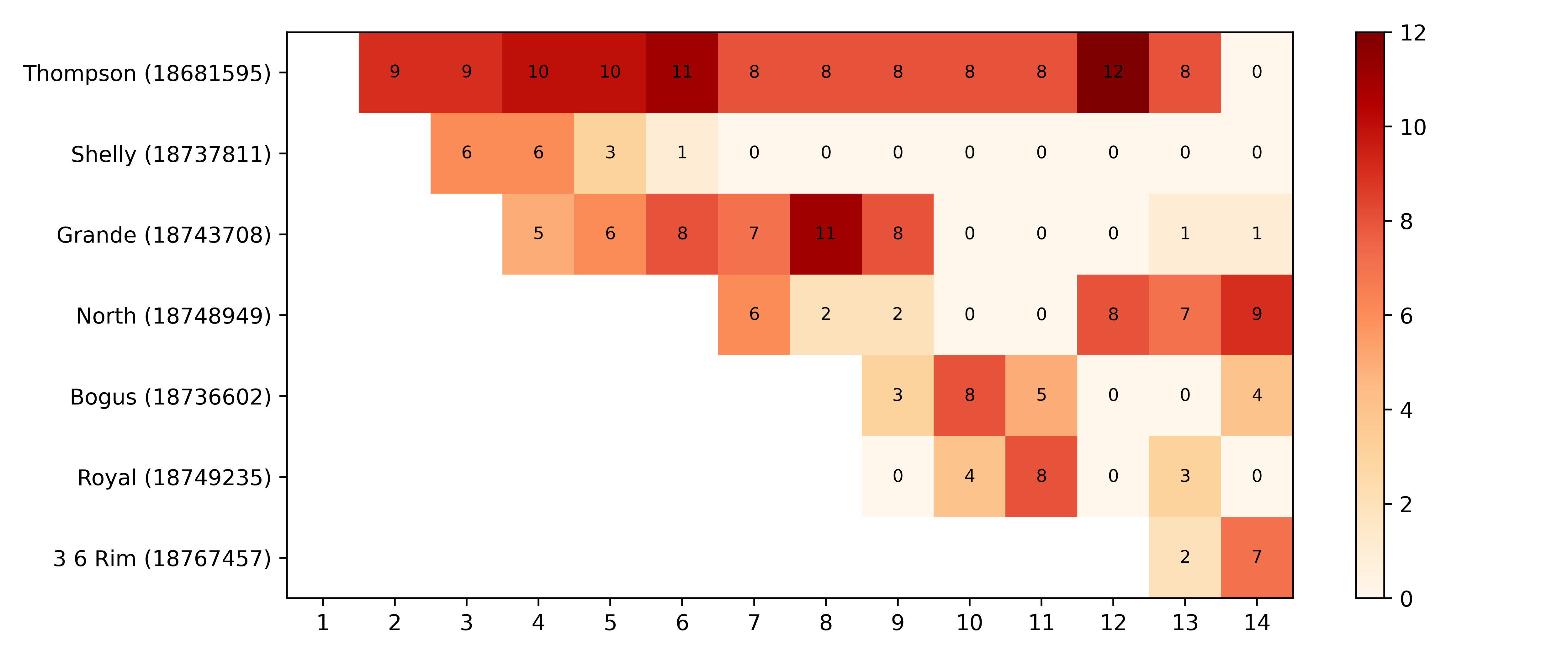}
        \caption{Crew assignments per fire per day.}
        \label{fig:crew_allocation}
    \end{subfigure}
    \caption{Fire progression (cumulative burned area) and crew allocation (number of crew assignments) for the Northern California GACC between July 1 and July 15, 2024 (7 concurrent fires, 21 available crews).}
    \label{fig:fire_and_crew}
    \vspace{-12pt}
\end{figure}

Finally, Table~\ref{tab:summary} identifies the additional benefits of inter-GACC coordination. As expected, the benefits of optimization remain robust in the presence of inter-GACC coordination; the optimized solution saves an estimated 153,891 acres across the two summers over the impact-based baseline, a 68\% improvement. Then, inter-GACC coordination results in an additional 47,982 acres saved over the intra-GACC resource allocation solution, a 15\% improvement. Whereas resource sharing across GACCs can be challenging to implement effectively in practice, these results highlight the efficiency benefits that could be achieved through centralized resource allocation. As mentioned in Section~\ref{subsec:scalability}, these benefits are enabled by the branch-and-price-and-cut methodology developed in this paper to solve large-scale instances arising from the nation-wide allocation problem.

\section{Conclusion}

Frequent and severe wildfire seasons require complex prioritization decisions to maximize the effectiveness of strained firefighting resources. In response, this paper developed an optimization methodology to support wildfire suppression operations over a dispersed geographic region. From a predictive standpoint, we developed a data-driven model of fire spread as a function of crew assignments and covariate information, using double machine learning. From a prescriptive standpoint, we formulated a mixed-integer optimization model that combines crew routing and wildfire triage, based on a general-purpose and non-linear fire spread model. The problem is formulated as a two-sided set partitioning model on time-state networks for wildfire dynamics and time-space-rest networks for crew assignments, with linking constraints between fire demand and crew supply. It is solved with an exact branch-and-price-and-cut algorithm, using (i) a two-sided column generation algorithm that generates fire suppression plans and crew routes iteratively; (ii) new ``augmented GUB cover cuts'' to tighten the problem's linear relaxation; and (iii) a novel dual-aware maximum-variance branching rule that guarantees finite convergence to an optimal integer solution.

Using real-world data, we have shown that the methodology developed in this paper can handle realistic problem instances characteristic of peak fire seasons in practice at the scale of the full contiguous United States. Notably, the branch-and-price-and-cut algorithm derives good solutions quickly and yields provably optimal or near-optimal solutions in large-scale instances when benchmark methods fail to even return a feasible solution. Combined with our data-driven fire spread model, the methodology can generate significant improvements in wildfire suppression effectiveness, roughly doubling the acres saved relative to the strongest practical benchmark over the summer seasons, and corresponding to an order of magnitude of 100,000 additional acres saved. Ultimately, this paper provides an effective prescriptive methodology for building and leveraging fire-spread models to strengthen wildfire suppression operations during intense, synchronous activity.

These promising results motivate further research in prescriptive wildfire management. Opportunities arise in identifying practical requirements from fire managers to translate the solutions of our algorithm into collaborative decision-making tools, and in leveraging recent developments in artificial intelligence to embed new predictive models of fire behavior into the proposed prescriptive optimization framework. Other interesting research questions involve adding robustness to wildfire suppression plans under uncertainty in fire spread and fire ignition. Finally, the tactical resource allocation problem can be integrated into upstream strategic planning problems or downstream operational response problems. The modeling and algorithmic approach to multi-fire suppression optimization developed in this paper provides methodological foundations to tackle this increasingly relevant class of problems at the core of climate change adaptation.

%

\section*{Acknowledgments}

This material is based upon work supported by the National Science Foundation Graduate Research Fellowship under Grant No. 2141064. The guidance from Erin Belval and Matthew Thompson from the US Forest Service is gratefully acknowledged. The authors thank Erwin Deng and Benjamin Rio for their valuable contributions to data extraction and analysis.

\bibliographystyle{informs2014}
\bibliography{sources}

\newpage

\begin{APPENDICES}

\ECSwitch
\ECHead{Electronic Companion}

\section{Discussion and proofs from Section~\ref{sec:model}}

\subsection{Table of notations}

\begin{table}[h!]
\centering\small\renewcommand{\arraystretch}{1.0}
\begin{tabular}{lll}\toprule\toprule
$\calT$ &  Set & Time periods, indexed by $t\in\calT=\{1, \ldots, T\}$, with terminal period $T+1$ \\
$\calJ$ &  Set & Crews, indexed by $j\in\calJ$ (we use $J=|\calJ|$) \\
$\calG$ &  Set & Fires, indexed by $g\in\calG$ (we use $G=|\calG|$) \\
$\calL_j$ &  Set & Locations for crew $j\in\calJ$ (fires $\calG_j$ in its jurisdiction and home base $b_j$), indexed by $\ell\in\calL_j$ \\
$\calS_g$ & Set & State space for fire $g\in\calG$ \\
$\calN_j$ &  Set & Nodes in time-space-rest network of crew $j\in\calJ$, indexed by $n\in\calN_j$ \\
$\calN_g$ &  Set & Nodes in time-state network of fire $g\in\calG$, indexed by $n\in\calN_g$ \\
$\calA_j$ &  Set & Arcs in time-space-rest network of crew $j\in\calJ$, indexed by $a\in\calA_j$ \\
$\calA_j^{gt}$ & Set & Arcs in time-space-rest network of crew $j\in\calJ$ suppressing fire $g\in\calG$ at time $t\in\calT$ \\
$\calB_g$ &  Set & Arcs in time-state network of fire $g\in\calG$, indexed by $a\in\calB_g$ \\
$\calB_g^{t}$ & Set & Arcs in time-state network of fire $g$ that require crew presence at time $t$\\
$\calP_j$ &  Set & Routes for crew $j\in\calJ$, indexed by $p\in\calP_j$ \\
$\calQ_g$ &  Set & Suppression plans for fire $g\in\calG$, indexed by $q\in\calQ_g$ \\
$d_a$ & Parameter & Cost of traversing arc $a\in\calB_g$ in time-state network of fire $g\in\calG$\\
$d_q$ & Parameter & Cost of fire suppression plan $q\in\calQ_g$, given by $d_q= \sum_{a\in \calB^q_g}d_a$\\
$c_a$ & Parameter & Cost of traversing arc $a\in\calA_j$ in time-space-rest network of crew $j\in\calJ$\\
$c_p$ & Parameter & Cost of crew route $p\in\calP_j$, given by $c_p = \sum_{a\in \calA^p_j}c_a$\\
$x_a$ & Parameter & Crews required to traverse arc $a\in\calB_g$ in time-state network of fire $g\in\calG$\\
$A_{pgt}$ & Parameter & Assignment of crew $j\in\calJ$ to fire $g\in\calG$ at time $t\in\calT$ in route $p\in\calP_j$: $A_{pgt}=\mathbf{1}(\calA^p_j\cap\A_{j}^{gt}\neq\emptyset)$\\
$B_{qt}$ & Parameter & Number of crews working on fire $g\in\calG$ at time $t\in\calT$ in plan $q\in\calQ_g$: $B_{qt}=\sum_{a\in\calB^q_g\cap\calB^t_g}x_a$\\
\bottomrule\bottomrule
\end{tabular}
\caption{Input data.}
\label{tab:full notation}
\vspace{-6pt}
\end{table}

\subsection{Discussion of Remark~\ref{rem:rest}}
\label{app:rest}

We discuss how to construct time-space-rest networks for each crew using $1 + |\calL_j|\times T\times(\varphi+1)$ nodes when Assumption~\ref{assumption:rest} does not hold.

We define a time-space-rest network similar to the one presented in Section~\ref{section:crew_aasignments}. The only change is that the third index of the node set no longer lies in $\{0,1\}$ but in $\{0,\ldots \varphi\}$. That is, we define $\calN_j = (\cal L_j \times \{2,\cdots,T+1\}\times\{0,\ldots \varphi\}) \,\cup\, (\ell^1_j,1, \min(\varphi, r^0_j))$; each node $n\in\calN_j$ track the location $\lambda(n)\in\cal L_j$, the time stamp $\tau\in\calT$, and an integer $\nu(n)$ indicating the number of time periods since the crew's last rest, capped at $\varphi$. The additional node $(\ell^1_j,1, \min(\varphi, r^0_j))$ defines the starting point of the crew. Define the set of directed arcs $\calA_j\subseteq \calN_j^2$  similarly as in Section~\ref{section:crew_aasignments}, this time tracking the number of time periods since the last rest: \begin{itemize}
    \item[--] \textit{Traveling arcs:} For all $\ell_1\neq \ell_2 \in \calL_j$ and $r\in \{0, \ldots \varphi\}$, an arc from $n_1=(\ell_1, t_1, r)$ to $n_2=(\ell_2, t_2, \min(\varphi, r + t_2 - t_1))$ is in $\calA_j$ if a crew at location $\ell_1$ in period $t_1$ can reach location $\ell_2$ by period $t_2$, i.e., if $\lceil tt(\ell_1,\ell_2)\rceil_{\calT}=\tau(n_2)-\tau(n_1)$.
    \item[--] \textit{Working arcs:} For all fire locations $g\in\calG_j$, time periods $t\in\{1,\cdots,T-1\}$, and $r\in \{0, \ldots \varphi\}$, an arc linking $(g, t, r)$ to $(g, t+1, \min(\varphi, r+1))$ is in $\calA_j$ to characterize suppression activities.
    \item[--] \textit{Resting arcs:} For all $r\in \{0, \ldots \varphi\}$, an arc from $n_1=(b_j,t,r)$ to $n_2=(b_j,t',0)$ indicates rest if $t'- t + 1\geq \gamma$ and an arc linking $(b_j, t, r)$ to $(b_j, t+1, \min(\varphi, r+1))$ indicates time at base that is too short to count as rest.
\end{itemize}

Again, we impose rest requirements by removing traveling and working arcs that violate them.

Suppose now that Assumption~\ref{assumption:rest} holds and consider any directed path $n_0n_1\cdots n_k$ in the new network with $n_0 = (\ell^1_j,0, \min(\varphi, -r^0_j))$ and $\tau(n_k) = T$. For any non-resting arc $(n_i, n_j)$, $\nu(n_j) = \min(\varphi, \nu(n_i) + \tau(n_j) - \tau(n_i))$. Therefore, for all $\ell \in \{1, \ldots, k\}$, if the crew has not completed a rest period by the time it reaches $n_\ell$, then $\nu(n_\ell) = \min(\varphi, \tau(n_\ell) -r_j^0)$. Since $R_j = \min(T, r_j^0 + \varphi)$ by definition, we have $\nu(n_\ell) \ge \varphi$ if and only if $r_j^0 - r_j^0 + \tau(n_\ell) = \tau(n_\ell) \ge R_j$, so the criterion for incurring the rest violation penalty is the same as in the network from Section~\ref{section:crew_aasignments}. In contrast, if the crew has completed a rest period by the time it enters $n_\ell$, then there is some nonnegative integer $\ell'\le \ell$ such that $\nu(n_{l'}) = 0$ and $\tau(n_{l'}) = \gamma$, so $\nu(n_\ell) = \min(\varphi, \tau(n_\ell) - \tau(n_\ell')) \le \min(\varphi, \tau(n_\ell) - \gamma) \le T - \gamma < \varphi$, and no rest violation is incurred in either network. Therefore, any path in the new network can be mapped into an equivalent path in the network from Section~\ref{section:crew_aasignments} under Assumption~\ref{assumption:rest}.


Note that if Assumption~\ref{assumption:rest} does not hold, then in the directed path $n_0n_1\cdots n_k$, if $(n_0, n_1)$ is the only rest period in the path, then $\nu(n_{k}) = \nu(n_1) + \tau(n_{k}) - \tau(n_1) = 0 + \tau(n_{k}) - \gamma = T - \gamma \ge \varphi$, so a rest violation is incurred. However, the corresponding path in the network from Section~\ref{section:crew_aasignments} has $\nu(n_k) = 1$ due to the rest period $(n_0, n_1)$ and remains feasible.

\subsection{Benchmark model: polynomial formulation in time-expanded networks}
\label{app:arc}

We formulate in this section the model in time-expanded networks with a polynomial number of arc-based variables. Note that the number of variables grows polynomially with the size of the time-expanded networks, although the time-state fire networks may themselves involve an exponential number of state---or at least a large number of states, in practice.

We define the following decision variables:
\begin{align*}
    z_a&=\begin{cases}
        1&\text{if crew $j\in\calJ$ traverses arc $a\in\calA_j$}\\
        0&\text{otherwise.}
    \end{cases}\\
 y_a&=\begin{cases}
        1&\text{if fire $g\in\calG$ traverses arc $a\in\calB_g$}\\
        0&\text{otherwise.}
    \end{cases}
\end{align*}

The problem is formulated as follows. Objective~\eqref{obj::full natural} minimizes the total cost of crew operations and fire damage. Constraints~\eqref{full natural crew flow balance}--\eqref{full natural crew start} enforce flow balance in the time-space-rest network of each crew. Constraints~\eqref{full natural fire flow balance}--\eqref{full natural fire start} enforce flow balance in the time-state network of each fire. Constraints~\eqref{full natural linking} is the fire-crew linking constraint; it ensures that at each time, a sufficient number of crews are routed to each fire on their time-space-rest networks to allow the corresponding transition in the time-state network (as specified by the parameters $x_a$). Constraints~\eqref{full crew arc int}--\eqref{full fire arc int} define the domain of all variables.
\begin{mini!}
    {}{\sum_{j\in \calJ}\sum_{a\in \A_j}c_az_a + \sum_{g\in \calG}\sum_{a\in \B_g}d_ay_a}{\label{model::full natural}}{} \label{obj::full natural}
    \addConstraint{\sum_{a\in 
\delta^+(n)}z_a - \sum_{a\in 
\delta^-(n)}z_a = 0, \, \forall j\in \calJ, \forall n\in \calN_j \text{ s.t. } 0 < \tau(n) < T} \label{full natural crew flow balance}
    \addConstraint{\sum_{a\in 
\delta^-((\ell^1_j,1,0))}z_a = 1, \, \forall j\in \calJ} \label{full natural crew start}
    \addConstraint{\sum_{a\in 
\delta_{g}^+(n)}y_{a} - \sum_{a\in 
\delta_{g}^-(n)}y_{a} = 0, \, \forall g\in \calG, \forall n\in \calN_g \text{ s.t. } 0 < \tau(n) < T} \label{full natural fire flow balance}
    \addConstraint{\sum_{a\in 
\delta^-((s^1_g,1))}y_{a} = 1, \, \forall g\in \calG} \label{full natural fire start}
    \addConstraint{\sum_{j\in \calJ}\sum_{a\in 
\A_{j}^{gt}}z_a - \sum_{a\in 
\B_{g}^t}x_ay_{a} \ge 0, \, \forall g\in \calG, \forall t\in \calT}\label{full natural linking}
\addConstraint{z_a \in \{0, 1\}, \,\forall j \in \calJ, \forall a\in \A_j} \label{full crew arc int}
\addConstraint{y_a \in \{0, 1\}, \,\forall g \in \calG, \forall a\in \calB_g.} \label{full fire arc int}
\end{mini!}

By construction, there is a one-to-one mapping between the solution space of Problems~\eqref{set partition} and~\eqref{model::full natural}, using the transformations $A_{p_jgt} = \sum\limits_{a\in \A_{j}^{gt}}z_a$, $B_{q_gt} = \sum\limits_{a\in \B_{g}^{t}}x_ay_a$, $c_p=\sum_{a\in \calA^p_j}c_a=\sum_{a\in \calA_j}c_az_a$, and $d_q = \sum_{a\in \calB^q_g}d_a = \sum_{a\in \calB_g}d_ay_a$. Therefore, both formulations yield the same integer solution and define the same linear relaxation. This is stated in the proposition below.

\begin{proposition} \label{prop:formulation equiv}
    Problem~\eqref{model::full natural} and Problem~\eqref{set partition} are equivalent.
\end{proposition}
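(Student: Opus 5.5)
The plan is to build explicit maps in both directions between feasible solutions of Problem~\eqref{model::full natural} and Problem~\eqref{set partition}, and to show that each map preserves feasibility and objective value. We treat the integer problems first and then the linear relaxations.

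\emph{From path variables to arc variables.} Take a feasible $(\by,\bz)$ of Problem~\eqref{set partition} and set $z_a=\sum_{p\in\calP_j:a\in\calA^p_j}z_p$ for $a\in\calA_j$, and $y_a=\sum_{q\in\calQ_g:a\in\calB^q_g}y_q$ for $a\in\calB_g$. Each route or plan is a path out of the starting node, so its incidence vector satisfies flow balance and the unit outflow at the source. Hence Constraints~\eqref{full natural crew flow balance}--\eqref{full natural fire start} follow by linearity together with~\eqref{supp_plan}--\eqref{crew_route}.

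For the linking constraint we need an identity for crews. In the time-expanded crew network each path visits exactly one node at each time stamp, so a path uses at most one arc of $\calA_j^{gt}$. Therefore $A_{pgt}=\sum_{a\in\calA_j^{gt}}\mathbf 1[a\in\calA^p_j]$, and summing gives $\sum_{a\in\calA_j^{gt}}z_a=\sum_p A_{pgt}z_p$. For fires, $B_{qt}=\sum_{a\in\calB^t_g}x_a\mathbf 1[a\in\calB^q_g]$ holds by definition, so $\sum_{a\in\calB_g^t}x_ay_a=\sum_q B_{qt}y_q$. With these two identities, \eqref{link set partition} transfers to \eqref{full natural linking}. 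The objectives agree because $c_p$ and $d_q$ are sums of arc costs.

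Integrality is preserved. Since $\sum_p z_p=1$ with binary $z_p$, exactly one route is selected, so each $z_a$ is $0$ or $1$; the same argument applies to $y_a$.

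\emph{From arc variables to path variables.} Take a feasible arc solution. Both networks are acyclic, since every arc moves strictly forward in time. By flow decomposition, each crew flow $(z_a)_{a\in\calA_j}$ with unit source outflow is therefore a convex combination of source paths, that is, of routes in $\calP_j$. In the integer case it is exactly one route. The same holds for fire plans in $\calQ_g$.

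The path weights $z_p,y_q$ obtained this way reproduce $z_a,y_a$, so the identities from the first direction give feasibility in~\eqref{link set partition} and equal cost. The decomposition may not be unique, but this causes no problem: any decomposition works.

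\emph{Main obstacle.} I expect the delicate step to be showing that the path sets $\calP_j$ and $\calQ_g$ coincide with the source paths of the arc model. Three points need checking:
\begin{itemize}
\item paths must end at terminal-time nodes, because flow balance is imposed only at intermediate times;
\item rest requirements must be encoded by arc removal, so that any network path is automatically rest-feasible;
\item the crew paths must use at most one working arc per $(g,t)$, so that $A_{pgt}$ stays binary.
\end{itemize}
Once these correspondences are in place, the two maps are mutually consistent. Applying them to fractional solutions shows that the linear relaxations have the same optimal value as well.
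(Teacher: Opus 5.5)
Your proposal is correct and takes essentially the same approach as the paper, which simply asserts the path--arc correspondence through $A_{pgt}=\sum_{a\in\calA_j^{gt}}z_a$, $B_{qt}=\sum_{a\in\calB_g^t}x_ay_a$ and the additive arc costs; you fill in the details (aggregation in one direction, flow decomposition on the acyclic time-expanded networks in the other), and your remark that fractional decompositions need not be unique is more precise than the paper's ``one-to-one'' claim, which is literally true only for integer solutions. One minor slip is that a route visits at most one node per time stamp, not exactly one, because multi-period travel and rest arcs skip periods; ``at most one'' is all your identity for $A_{pgt}$ needs, so the argument stands.
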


\subsection{Properties of fire plans and crew routes}

The following lemmas identify important properties of the crew routes $\calP$ and fire suppression plans $\calQ$. Specifically, Lemma~\ref{lem crew route} shows that any crew route attends to up to one fire at a time, and Lemma~\ref{lem fire plan} shows that any fire suppression plan uses no more crews than the maximum arc requirement, which, by construction, is at most the total number of crews.

\begin{lemma} \label{lem crew route}
    For all crews $j\in\calJ$, routes $p\in \calP_j$, fires $g\in\calG$ and times $t\in\calT$, $A_{pgt} \in\{0, 1\}$. Moreover, $\sum\limits_{g\in\calG}A_{pgt}\in \{0, 1\}$ for all $p\in \calP_j$, $j\in\calJ$, and $t\in\calT$.
\end{lemma}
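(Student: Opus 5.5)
The plan is to argue directly from the structure of the time-space-rest network $\calA_j$. The key observation is that a route $p\in\calP_j$ is a directed path starting at $(\ell^1_j,1,0)$. Every arc strictly increases the time stamp $\tau$, because traveling arcs have $\tau(n_2)-\tau(n_1)=\lceil tt(\ell_1,\ell_2)\rceil_{\calT}\geq 1$, while working and resting arcs move from $t$ to $t+1$ or to $t'>t$. Hence the path visits at most one node with any given time stamp $t$. Since every node has a single location, it follows that for each $t$ the path contains at most one node of the form $(g,t,r)$ with $g\in\calG$ and $r\in\{0,1\}$, across all fires $g$ and rest indicators $r$.

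First step. Recall that $\calA_j^{gt}=\delta^+((g,t,0))\cup\delta^+((g,t,1))$ consists of arcs leaving a node at location $g$ and time $t$, and that $A_{pgt}=\mathbf{1}(\calA^p_j\cap\calA_j^{gt}\neq\emptyset)$. Because $A_{pgt}$ is an indicator by definition, $A_{pgt}\in\{0,1\}$ holds immediately. This gives the first claim.

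Second step. Suppose $A_{pgt}=A_{pg't}=1$ for some $g\neq g'$. Then $p$ traverses one arc leaving a node $(g,t,r)$ and another arc leaving a node $(g',t,r')$. These are two distinct nodes on the path that share the time stamp $t$, which contradicts the strict monotonicity of $\tau$ along the path. Therefore $\sum_{g\in\calG}A_{pgt}\leq 1$, and since each term lies in $\{0,1\}$, the sum lies in $\{0,1\}$.

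The only step that needs care is verifying that every arc type strictly increases $\tau$. The delicate case is traveling arcs when $tt(\ell_1,\ell_2)$ could be zero or fractional. Here I would use that $\lceil\cdot\rceil_{\calT}$ returns an element of $\calT=\{1,\dots,T\}$, so $\tau(n_2)-\tau(n_1)\geq 1$. A second point to check is that a simple path in a DAG cannot revisit a time stamp; this follows directly from the monotonicity once it is established.
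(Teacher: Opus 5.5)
Your proof is correct and follows essentially the paper's argument. Both rest on the fact that every arc of the time-space-rest network strictly increases the time stamp, so a route occupies at most one node, and hence at most one fire location, at any time $t$; the paper phrases this through the arc-flow variables of the polynomial formulation and deduces the per-fire claim from the aggregate one, whereas you work directly on the path and rightly note that $A_{pgt}\in\{0,1\}$ is immediate from its definition as an indicator.
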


\paragraph{Proof of Lemma~\ref{lem crew route}.}
Consider a solution $(\by^*,\bz^*)$ to Problem~\eqref{model::full natural}. Define the directed network flow graph on $\calA_j$ with arc flows given by $z^*_a$. The flow-balance constraints~\eqref{full natural crew flow balance} and~\eqref{full natural crew start} ensure that this graph has a single connected component originating at $(\ell^1_j,1,0)$ with one unit of flow. Since no arc can exist from $(\ell_1, t_1, r_1)$ to $(\ell_2, t_2, r_2)$ if $t_2 \le t_1$, flow can only enter the set of nodes at time $t\in\calT$ once. This means that $\sum\limits_{g\in\calG}\sum\limits_{a\in\calA_{j}^{gt}}z^*_a \le 1$, and the binary constraints~\eqref{full crew arc int} give $\sum\limits_{g\in\calG}\sum\limits_{a\in\calA_{j}^{gt}}z^*_a \in \{0, 1\}$, so the corresponding route $p\in\calP_j$ has $\sum\limits_{g\in\calG}A_{pgt} \in \{0, 1\}$, as desired. Because for any fire $g^*\in\calG$, $\calA_{j}^{g^*t} \subseteq \bigcup\limits_{g\in\calG} \calA_{j}^{gt}$, a similar reasoning shows that for all $g\in\calG$, $t\in\calT$, $A_{pgt} \in\{0, 1\}$.
\hfill\Halmos

\begin{lemma} \label{lem fire plan}
    For all fires $g\in\calG$, plans $q\in \calQ_g$ and times $t\in\calT$, $B_{qt} \in\left\{0, \ldots, \max\limits_{a\in \B_{g}^t }x_a\right\}$.
\end{lemma}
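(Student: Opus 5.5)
The plan mirrors the proof of Lemma~\ref{lem crew route}: exploit the forward-in-time, acyclic structure of the time-state network $\calB_g$. Fix a fire $g\in\calG$, a plan $q\in\calQ_g$ and a time $t\in\calT$. By definition, $B_{qt}=\sum_{a\in\calB^q_g\cap\calB^t_g}x_a$. The goal is to show that this sum has at most one term, and that this term lies in $\{0,\ldots,\max_{a\in\calB^t_g}x_a\}$.

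First, I will show that $|\calB^q_g\cap\calB^t_g|\le 1$. Every arc of $\calB_g$ goes from a node $(s_1,t')$ to a node $(s_2,t'+1)$, so the time stamp increases by exactly one along each arc. The plan $q$ is a path starting at $(s^1_g,1)$, so its time stamps strictly increase. Hence it traverses exactly one arc whose tail has time stamp $t$; equivalently, in the arc formulation of Proposition~\ref{prop:formulation equiv}, flow balance with one unit of inflow gives $\sum_{a\in\calB_g:\tau(\text{tail}(a))=t}y_a=1$. Since $\calB^t_g$ is the subset of these arcs with $x_a>0$, the path contains at most one arc of $\calB^t_g$.

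Next, I split into two cases. If the path contains no arc of $\calB^t_g$, the sum is empty and $B_{qt}=0$. If it contains exactly one arc $a^*$, then $B_{qt}=x_{a^*}$, and $0< x_{a^*}\le\max_{a\in\calB^t_g}x_a$ because $a^*\in\calB^t_g$. In addition, $x_{a^*}$ is a nonnegative integer, since $x_a$ is defined as the minimum number of crews required to traverse the arc. In both cases, $B_{qt}\in\{0,\ldots,\max_{a\in\calB^t_g}x_a\}$, which proves the lemma. The convention $\max\emptyset=0$ handles the case $\calB^t_g=\emptyset$.

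The only step needing care is the first: it relies on the path structure of plans in $\calQ_g$, namely a single unit of flow from the source and arcs advancing exactly one period. This is inherited directly from the network construction, so I expect no real obstacle.
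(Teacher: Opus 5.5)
Your proof is correct and follows essentially the same route as the paper: both use the fact that every arc of the time-state network advances time by one period, so a plan traverses at most one arc of $\calB^t_g$, making $B_{qt}$ either $0$ or a single $x_a\le\max_{a\in\calB^t_g}x_a$. You argue directly on the path rather than through the arc-flow formulation, and you make explicit the integrality of $x_a$ and the empty-max convention, which the paper leaves implicit.
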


\paragraph{Proof of Lemma~\ref{lem fire plan}.}
Consider a solution $(\by^*,\bz^*)$ to Problem~\eqref{model::full natural}. Define the directed network flow graph on $\calB_g$ with flows given by $y^*_a$. The flow-balance constraints~\ref{full natural fire flow balance} and~\ref{full natural fire start} ensure that this graph has a single connected component originating at $(s^1_g,1)$ with one unit of flow. Since no arc can exist from $(\ell_1, t_1)$ to $(\ell_2, t_2)$ if $t_2 \le t_1$, flow can only enter the set of nodes at time $t\in\calT$ once. This means that $\sum\limits_{a\in\calB_{g}^{t}}x_ay^*_a \le \max\limits_{a\in\calB_{g}^{t}}x_a$, so the corresponding plan $q\in\calQ_g$ has $B_{qt} \in\{0, \ldots, \max\limits_{a\in \B_{g}^t }x_a\}$, as desired.
\hfill\Halmos

\section{Algorithms and proofs from Section~\ref{sec:algorithm}}

\subsection{Shortest path algorithms for subproblems}\label{app:SP}

Algorithm~\ref{alg:sp} presents the generic label-setting algorithm for the subproblem, leveraging the shortest path structure and the topological ordering of arcs in the time-expanded networks. Algorithms~\ref{alg:crew_sp} and~\ref{alg:fire_sp} apply this procedure to the crew subproblems and the fire subproblems, respectively.

\begin{algorithm} [h!]
\caption{\textsc{TopologicalShortestPath}.}\small
\label{alg:sp}
\begin{algorithmic}
\item \textbf{Inputs:} Node set $\calV$, arc set $\calE$, arc costs $\{c(e):e\in\calE\}$, arc travel times $\{t(e):e\in\calE\}$, starting node $v_{start}$
\item \textbf{Requirement:} All arcs travel forward in time; that is, for all $v_1, v_2\in V$, if $t(v_1) \ge t(v_2)$, then $(v_1, v_2) \not\in E$.
\item \textbf{Requirement:} The starting vertex is alone at the earliest time; that is, $t(v_{start}) < t(v)$ for all $v\in \calV\setminus v_{start}$.
\item \textbf{Initialization:} length $\ell(v)$ of shortest path from $v_{start}$ to $v$: $\ell(v_{start}) \gets 0$ and $\ell(v) \gets \infty,\forall v\in V\setminus v_{start}$.
\item For each time period $t \in \{t(v) : v\in V\}$ execute the following step in ascending time order.
\begin{itemize}
    \item[] \textbf{Update step:} Consider all vertices $v$ with $t(v) = t$. For each one, update $\ell(v)=\min\{\ell(u)+c(e):e=(u,v)\in\delta^+(v)\}$; store an arc that achieves the minimum (if finite).
\end{itemize}
\item \textbf{Reconstruction step:} among all termination vertices, find $v_{min}$ with lowest cost. If the cost is $\infty$, return $\infty$. Otherwise, return $\ell(v_{min})$ and the path from $v_{start}$ using the stored arcs.
\end{algorithmic}
\end{algorithm}

\begin{algorithm} [h!]
\caption{$\crewsp$.}\small
\label{alg:crew_sp}
\begin{algorithmic}
\item \item \textbf{Inputs:} Crew $\in\calJ$, dual costs $\brho$
\item \textbf{Step 1:} Adjust arc costs: for each $g\in \calG$ and $t\in \calT$, subtract $\rho_{gt}$ from the fixed cost of each arc in $\calA_j^{gt}$.
\item \textbf{Step 2:} Find a shortest path $p$ from the starting state and its cost $c_p$ using Algorithm~\ref{alg:sp}.
\item  \textbf{Step 3:} Determine $A_{pgt}$ for each fire at each time. Return this matrix and the route cost $c_p$.
\end{algorithmic}
\end{algorithm}

\begin{algorithm} [h!]
\caption{$\firesp$.}\small
\label{alg:fire_sp}
\begin{algorithmic}
\item \item \textbf{Inputs:} Fire $\in\calG$, dual costs $\rho_g$
\item \textbf{Step 1:} Adjust arc costs: for each $t\in \calT$, add $\rho_{gt}$ to the fixed cost of each arc in $\calB^t_g$.
\item \textbf{Step 2:} Find a shortest path $q$ from the starting state and its cost $d_q$ using Algorithm~\ref{alg:sp}.
\item  \textbf{Step 3:} Determine $B_{qt}$ for each time. Return this vector and the suppression plan cost $d_q$.
\end{algorithmic}
\end{algorithm}

\subsection{Proof of Proposition~\ref{prop:deferral}.}

It suffices to show that for any feasible solution $(\boldsymbol{y}, \boldsymbol{z}, \boldsymbol{\delta})$, there is a feasible solution of equal or lesser cost with $\delta_{gt} = 0$ for all $g\in \calG, t\in \calT$. First, note that $(\boldsymbol{y}, \boldsymbol{z}, \lfloor\boldsymbol{\delta}\rfloor)$ is feasible because the integrality of $A_{pgt}$ and $B_{qt}$ imply that the inequalities defined by~\eqref{link set partition new} can be rounded down to $\sum\limits_{p \in \calP_j} A_{pgt}z_{p}  -\sum\limits_{q \in \calQ_g} B_{qt}y_q   + \lfloor\delta_{g,t-1} - \delta_{gt}\rfloor \ge 0$, and a general property of $\lfloor\cdot\rfloor$ gives $\lfloor\delta_{g,t-1} - \delta_{gt}\rfloor \le \lfloor\delta_{g,t-1}\rfloor - \lfloor\delta_{gt}\rfloor$. Since $\boldsymbol{\delta}$ does not enter into the objective function, this solution has the same cost.

Now let us fix $g\in\calG$, consider the plan $q^*\in \calQ_{g}$ such that $y_{q^*} = 1$, and define
$$x_t = \begin{cases}
    B_{q^*t} + \lfloor \delta_{gt}\rfloor - \lfloor\delta_{g,t-1}\rfloor & \forall t\in\{1, \ldots, T-1\} \\
    B_{q^*T} - \lfloor\delta_{g,T-1}\rfloor & \text{for $t=T$}
\end{cases}$$

Per the feasibility of $(\boldsymbol{y}, \boldsymbol{z}, \lfloor\boldsymbol{\delta}\rfloor)$ and Lemma~\ref{lem crew route}, we have $x_t \le \sum\limits_{j\in \calJ} \sum\limits_{p \in \calP_j} A_{pgt}z_{p} \le J$ for all $t\in\calT$. Thus, $x_t \in \{0, \ldots, J\}$, and there exists $q'\in \calQ_g$ with $B_{q',t} = x_t$ for all $t\in \calT$.

We claim that the following solution satisfies constraint~\eqref{link set partition new} with equal or lesser cost:
\begin{align*}
	\boldsymbol{z'} &= \boldsymbol{z}\\
    y'_q &= \begin{cases*}
    y_q & \text{if }$q \in \calQ\setminus\{q^*, q'\}$ \\
    1 & \text{if }$q = q'$ \\
    0 & \text{if }$q = q^*$
\end{cases*} \forall q\in\calQ\\
\delta'_{{h}t}& = \begin{cases*}
                0 & \text{if }$h = g$ \\
    \delta_{{h}t} & otherwise
    \end{cases*} \forall h\in\cal G, t\in \cal T
\end{align*}

Feasibility again follows from the feasibility of  $(\boldsymbol{y}, \boldsymbol{z}, \lfloor\boldsymbol{\delta}\rfloor)$ and the fact that $\lfloor \delta_{gT}\rfloor \ge 0$. Indeed:
\begin{itemize}
    \item[--] For $h\neq g\in\calG$ and $t\leq T$:
\begin{align*}
    \sum\limits_{j\in \calJ} \sum\limits_{p \in \calP_j} A_{pht}z'_{p}  -\sum\limits_{q \in \calQ_h} B_{qt}y'_q   + \lfloor\delta'_{h,t-1}\rfloor - \lfloor\delta'_{ht}\rfloor &= \sum\limits_{j\in \calJ} \sum\limits_{p \in \calP_j} A_{pht}z_{p}  -\sum\limits_{q \in \calQ_h} B_{qt}y_q + \lfloor\delta_{h,t-1}\rfloor - \lfloor\delta_{ht}\rfloor &\ge 0
\end{align*}
    \item[--] For $g\in\calG$ and $t\leq T-1$:
\begin{align*}
    \sum\limits_{j\in \calJ} \sum\limits_{p \in \calP_j} A_{pgt}z'_{p}  -\sum\limits_{q \in \calQ_g} B_{qt}y'_q   + \lfloor\delta'_{g,t-1}\rfloor - \lfloor\delta'_{gt}\rfloor &= \sum\limits_{j\in \calJ} \sum\limits_{p \in \calP_j} A_{pgt}z_{p}  -\sum\limits_{q \in \calQ_g} B_{qt}y_q - B_{q't} + B_{q^*t} \\
    &= \sum\limits_{j\in \calJ} \sum\limits_{p \in \calP_j} A_{pgt}z_{p}  -\sum\limits_{q \in \calQ_g} B_{qt}y_q - x_t + B_{q^*t} \\
    &= \sum\limits_{j\in \calJ} \sum\limits_{p \in \calP_j} A_{pgt}z_{p}  -\sum\limits_{q \in \calQ_g} B_{qt}y_q + \lfloor\delta_{g,t-1}\rfloor - \lfloor\delta_{gt}\rfloor \\
    &\ge 0
\end{align*}
    \item[--] For $g\in\calG$ and $t=T$:
\begin{align*}
    \sum\limits_{j\in \calJ} \sum\limits_{p \in \calP_j} A_{pgT}z'_{p}  -\sum\limits_{q \in \calQ_g} B_{qT}y'_q   + \lfloor\delta'_{g,T-1}\rfloor - \lfloor\delta'_{gT}\rfloor &= \sum\limits_{j\in \calJ} \sum\limits_{p \in \calP_j} A_{pgT}z_{p}  -\sum\limits_{q \in \calQ_g} B_{qT}y_q - B_{q'T} + B_{q^*T} \\
    &= \sum\limits_{j\in \calJ} \sum\limits_{p \in \calP_j} A_{pgT}z_{p}  -\sum\limits_{q \in \calQ_g} B_{qT}y_q - x_T + B_{q^*T} \\
    &= \sum\limits_{j\in \calJ} \sum\limits_{p \in \calP_j} A_{pgT}z_{p}  -\sum\limits_{q \in \calQ_g} B_{qT}y_q + \lfloor\delta_{g,T-1}\rfloor \\
    &\ge \lfloor\delta_{gT}\rfloor\\
    &\ge 0
\end{align*}
\end{itemize}

It remains only to prove that $d_{q'} \le d_{q^*}$. We will prove this using the deferral-proof property. For all $q\in\calQ_g$,  for all $t\in \calT$, define $\Delta(q, t) = B_{qt} - B_{q^*t}$. For all $t^* \in \{1, \ldots, T-1\}$, telescoping gives
$$\sum_{t=1}^{t^*}\Delta(q', t) = \sum_{t=1}^{t^*}(B_{q't} - B_{q^*t}) = \sum_{t=1}^{t^*}(x_t - B_{q^*t}) =\sum\limits_{t=1}^{t^*}\left( \lfloor \delta_{gt}\rfloor - \lfloor\delta_{g,t-1}\rfloor \right)  = \lfloor\delta_{gt^*}\rfloor$$
Similarly,
$$\sum_{t=1}^{T}\Delta(q', t) = \sum\limits_{t=1}^{T-1}\left( \lfloor \delta_{gt}\rfloor - \lfloor\delta_{g,t-1}\rfloor \right)-\lfloor\delta_{g,T-1}\rfloor=\lfloor\delta_{g,T-1}\rfloor-\lfloor\delta_{g,T-1}\rfloor=0$$

We proceed by induction on $\sum\limits_{t\in\calT}|\Delta(q, t)|$ to prove that $d_{q} \le d_{q^*}$ for any plan $q\in\calQ_g$ satisfying $\sum\limits_{t=1}^{\tau}\Delta(q, t) \in \mathbb Z_+$ for all $\tau\in\calT$, and $\sum\limits_{t=1}^{T}\Delta(q, t) = 0$.

Base case: $\sum\limits_{t\in\calT}|\Delta(q, t)| = 0$. Then $\Delta(q, t) = 0$ for all $t\in\calT$, so $B_{qt} = B_{q^*t}$ for all $t\in \calT$, and $d_{q} = d_{q^*}$.

Inductive step: Suppose $\sum\limits_{t\in\calT}|\Delta(q, t)| = k>0$ and the result holds for all nonnegative integers less than $k$. Let $t^*=\min\{t\in\calT:|\Delta(q,t)|>0\}$. Since $k > 0$, $t^*$ is finite; and since $\sum\limits_{t=1}^{T}\Delta(q, t)=0$, we have $t^*<T$. Furthermore,  $\Delta(q, t^*) > 0$ as $\sum\limits_{t=1}^{t^*}\Delta(q, t) =\Delta(q, t^*)$ and $\sum\limits_{t=1}^{t^*}\Delta(q, t)\geq 0$ by assumption.  Let $t^{**}=\min\{t>t^*:\Delta(q,t)<0\}$. Again, $t^{**}$ is finite because $\sum\limits_{t=1}^{t^*}\Delta(q, t) > 0$ and $\sum\limits_{t\in\calT}\Delta(q, t) = 0$. Define:
$$x_t = \begin{cases*}
    B_{qt} & \text{for all }$t\in\calT\setminus\{t^*, t^{**}\}$ \\
   B_{qt} - 1 & \text{for }$t = t^*$ \\
   B_{qt} + 1 & \text{for }$t = t^{**}$. \\
\end{cases*}$$

Since $x_t\leq\max\{B_{qt},B_{q^*t}\}$ for all $t\in\calT$, we have $x_t\leq J$. We construct $\overline{q}\in \calQ_g$ with $B_{\overline{q}t} = x_t$ for all $t\in \calT$. Applying the definition of deferral-proofness, we get $d_q \le d_{\overline{q}}$. Furthermore:
$$\Delta(\overline{q}, t) = \begin{cases*}
    \Delta(q, t) & $t\in\calT\setminus\{t^*, t^{**}\}$ \\
   \Delta(q, t) - 1 & $t = t^*$ \\
   \Delta(q, t) + 1 & $t = t^{**}$. \\
\end{cases*}$$
so 
$$\sum\limits_{t=1}^{\tau}\Delta(\overline{q}, t)  = \begin{cases*}
   \sum\limits_{t=1}^{\tau}\Delta(q, t) - 1 & if $t^* \le \tau < t^{**}$ \\
   \sum\limits_{t=1}^{\tau}\Delta(q, t) & otherwise
\end{cases*},$$
The construction of $t^*$ and $t^{**}$ ensures that $\sum\limits_{t=1}^{\tau}\Delta(q, \tau) \ge 1$ if  $t^* \le \tau < t^{**}$. Hence, $\sum\limits_{t=1}^{\tau}\Delta(\overline{q}, t) \in \mathbb Z_+$ for all $\tau\in\calT$. Furthermore, $\sum\limits_{t=1}^{T}\Delta(\overline{q}, t) = \sum\limits_{t=1}^{T}\Delta(q, t) = 0$. Lastly, $\sum\limits_{t\in\calT}|\Delta(\overline{q}, t)| \le \sum\limits_{t\in\calT}|\Delta(q, t)| - 2$, so the induction hypothesis gives $d_{\overline{q}} \le d_{q^*}$. This proves that $d_q \le d_{q^*}$, and completes the induction.
\hfill\Halmos

\subsection{Proof of Proposition~\ref{prop:robust}}

Let $\alpha_u$ denote the dual variable of constraint~\eqref{robust_inequality}. The new subproblems become:
\begin{align*}
& \min_{q\in \calQ_g} \left(d_{q} + \sum_{t\in \calT} B_{qt}\rho_{gt} + \sum_{u\in\calU}\sum_{d=0}^J\mathbbm 1[B_{qt_u} = d]\delta_{ugd}\alpha_u-\sigma_g\right) \\
& \min_{p\in \calP_j} \left( c_{p} - \sum_{g\in \calG}\sum_{t\in \calT} A_{pgt}\rho_{gt} + \sum_{u\in\calU}\sum_{g\in \calG}\sum_{d=0}^1 \mathbbm 1\left[A_{pgt_u} = d\right]\gamma_{ugjd}\alpha_u-\pi_j\right).
\end{align*}

The cost of the duals are readily introduced into the fire and crew subproblems by modifying arc costs---in particular, they require no extra labels in the subproblem algorithms. For every fire in $g\in \calG$, every arc $a \in \calB^t_g$ should have $\delta_{ugd}\alpha_u$ added to its cost if $x_a = d$, i.e., if $d$ crews are assigned to it at the corresponding time. For every crew $j\in \calJ$ and every fire $g \in\calG$, every arc in $\calA_j^{gt}$ should have $(\gamma_{ugj1} - \gamma_{ugj0})\alpha_u$ added to its cost. Then, the total subproblem cost should have $\gamma_{ugj0}\alpha_u$ added as part of the terminal arc costs. That way, $\gamma_{ugj0}\alpha_u$ is added if crew $j$ does not suppress fire $g$ at time $t$, and $\gamma_{ugj1}\alpha_u$ is added if crew $j$ suppresses fire $g$ at time $t$.

\subsection{Proof of Theorem~\ref{thm::GUB cover}.}
    Summing the set partitioning constraints (Equations~\eqref{supp_plan} and~\eqref{crew_route}) over $\calG_u$ and $\calJ_u$ gives $$ \sum_{g\in \calG_u}\sum_{q\in \calQ_g} y_q + \sum_{j\in \calJ_u}\sum_{p\in \calP_j} z_p = |\calG_u| + |\calJ_u|.$$ 

    Suppose for the sake of contradiction that $$ \sum_{g\in \calG_u}\sum_{q\in \calQ_g} \mathbbm 1[B_{qt} \ge D_{ug}]y_q + \sum_{j\in \calJ_u}\sum_{p\in \calP_j}\mathbbm 1\left[\sum_{g\in \calG_u}A_{pgt}  =  0\right]z_p \ge |\calG_u| + |\calJ_u|.$$
    Subtracting this inequality from the equality above gives $$ \sum_{g\in \calG_u}\sum_{q\in \calQ_g} \mathbbm 1[B_{qt} < D_{ug}]y_q + \sum_{j\in \calJ_u}\sum_{p\in \calP_j}\mathbbm 1\left[\sum_{g\in \calG_u}A_{pgt}  \neq  0\right]z_p \le 0.$$ This implies that $B_{qt}\geq D_{ug}$ if $y_q=1$ for all $q\in \calQ_g$ and $g\in\calG_u$, and that $A_{pgt}=0$ if $z_p=1$ for $p\in\calP_j$, for all $j\in\calJ_u$ and $g\in\calG_u$. Therefore, $\sum\limits_{q\in \calQ_g}B_{qt}y_q \ge \sum\limits_{q\in \calQ_g}D_{ug}y_q = D_{ug}$ for all $g\in \calG_u$ and  $\sum\limits_{p\in \calP_j}\sum\limits_{g\in \calG_u}A_{pgt}z_p = 0$ for all $j\in \calJ_u$. Summing the linking constraint (Equation~\eqref{link set partition}) across all fires in $\calG_u$ gives
    \begin{align*}
        0 &\le \sum_{g\in \calG_u}\sum_{j\in\calJ}\sum_{p\in \calP_j}A_{pgt}z_p - \sum_{g\in \calG_u}\sum_{q\in \calQ_g}B_{qt}y_q \\
        &= \sum_{j\in \calJ_u}\sum_{p\in \calP_j}\sum_{g\in \calG_u}A_{pgt}z_p + \sum_{j\in \calJ\setminus \calJ_u}\sum_{p\in \calP_j}\sum_{g\in \calG_u}A_{pgt}z_p - \sum_{g\in \calG_u}\sum_{q\in \calQ_g}B_{qt}y_q \\
        &\le \sum_{j\in \calJ_u}\sum_{p\in \calP_j}\sum_{g\in \calG_u}A_{pgt}z_p + \sum_{j\in \calJ\setminus \calJ_u}\sum_{p\in \calP_j}\sum_{g\in \calG}A_{pgt}z_p - \sum_{g\in \calG_u}\sum_{q\in \calQ_g}B_{qt}y_q \\
        &\le \sum_{j\in \calJ_u}\sum_{p\in \calP_j}\sum_{g\in \calG_u}A_{pgt}z_p + \sum_{j\in \calJ\setminus \calJ_u}\sum_{p\in \calP_j}z_p - \sum_{g\in \calG_u}\sum_{q\in \calQ_g}B_{qt}y_q \\
        &\le 0 + (J - |\calJ_u|) - \sum_{g\in \calG_u} D_{ug},
    \end{align*} 
    where the first inequality uses $\calG_u\subseteq\calG$, the second inequality stems from Lemma~\ref{lem crew route}, and the last inequality leverages the results obtained above. We conclude that $\sum\limits_{g\in \calG_u} D_{ug} + |\calJ_u| \le J$, which contradicts the assumption of the theorem.

    Finally, these cuts satisfy the structure of the robust inequalities, with:
    $$\delta_{ugd} =     \begin{cases*}
        1 & $g\in \calG_u \text{ and } d\ge D_{ug}$ \\
        0 & \text{ otherwise.}
    \end{cases*}$$ and $$\gamma_{ugjd} = 
    \begin{cases*}
        1 & $g\in \calG_u, d = 0$ \\
        0 & \text{ otherwise.}
    \end{cases*}$$
This completes the proof.
\hfill\Halmos

\subsection{Details on GUB cuts and strengthening procedure}
\label{app:GUBdetails}

\paragraph{Finding GUB cover cuts.} 

Given a fractional solution $(\mathbf y^*, \mathbf z^*)$, we proceed by enumeration to find a GUB cover inequality that is valid for Problem~\eqref{set partition} and violated by $(\mathbf y^*, \mathbf z^*)$, with two minor simplifications (detailed in Algorithm~\ref{alg:enumerate_gub_cuts}). The first one is to only consider crews in $\calJ_u$ that suppress no fires in the incumbent solution at time $t$. This prevents the generation of weaker cuts due to crew symmetry by leveraging signal from the incumbent solution (e.g., a crew may need to rest at this time). The second one is to consider only fire demand allotments associated with active suppression plans in the incumbent solution. For example, if $\sum\limits_{q \in \calQ_g }\mathbbm{1}[B_{qt} = d]y^*_q = 0$ and $D_{ug} = d$, then redefining $D_{ug} = d-1$ leads to a valid (and stronger) inequality.

\begin{algorithm} [h!]
\caption{Minimal GUB cover cut enumeration.}\small
\label{alg:enumerate_gub_cuts}
\begin{algorithmic}
\item \textbf{Initialization:} $\calU\gets\emptyset$; $\calJ_u=\left\{j\in\calJ:\sum\limits_{p\in \calP_j}\sum\limits_{g\in \calG} A_{pgt}z^*_p = 0\right\}$; $\calD_g \gets \left\{d : \sum\limits_{q \in \calQ_g }\mathbbm{1}[B_{qt} = d]y^*_q > 0\right\}$ for all $g\in\calG$.
\item For each time period $t\in\calT$, complete the following procedure.
\begin{itemize}
    \item[] For each subset $\calG_u\subseteq \calG$, and for each tuple in the Cartesian product of demands $D_{ug}\in\prod\limits_{g\in \mathcal{G}_u} \calD_g$
    \begin{itemize}
    	\item[] If $\sum\limits_{g\in \calG_u} D_{ug} + |\calJ_u|> J$; and if $\sum\limits_{g\in \calG_u} D_{ug} + |\calJ_u| - \min\limits_{g\in \calG_u} D_{ug}\leq J$, and if $\sum\limits_{g\in \calG_u}\sum\limits_{q\in \calQ_g} \mathbbm 1[B_{qt} \ge D_{ug}]y^*_q  > |\calG_u|  - 1$, then $\calU\gets\calU\cup\{(t, \calG_u, \calJ_u, (D_{ug})_{g\in \mathcal{G}_u})\}$.
	\end{itemize}
\end{itemize}
\end{algorithmic}
\end{algorithm}

\paragraph{Strengthened GUB cover cuts.}
In column generation, high-dimensional faces of low-dimensional polyhedra---with a subset of variables---may not be as strong in the full-dimensional polyhedron with new variables. We propose a strengthening approach by implicitly considering omitted variables. Suppose that a GUB cover cut satisfies $\sum\limits_{g\in \calG_u} D_{ug} + |\calJ_u|> J + k$ for $k\in\Z_+$. Then we can decrease some targets $D_{ug}$ to derive a stronger cut upon replacing $D_{ug}$ by $D'_{ug}\leq D_{ug}$ in Equation~\eqref{gub_cover}, by minimizing the slack between target demand and incumbent demand:
\begin{equation}\label{strengthening}
	\min\ \left\{\max_{g\in \calG_u}\left(D'_{ug} - \sum\limits_{q\in \calQ_g}B_{qt}y^*_q\right):\sum_{g\in \calG_u} D'_{ug} \ge J - |\calJ_u| + 1;D'_{ug} \le D_{ug};\ D'_{ug} \in\Z_+,\,\forall g\in \calG_u\right\}
\end{equation}

In the 10-crew example from Section~\ref{section:cutting planes} of the paper, we can derive
    \begin{align*}
	& \sum\limits_{q\in \calQ_1} \mathbbm 1[B_{qt} \ge 2]y_q  + \sum\limits_{q\in \calQ_2} \mathbbm 1[B_{qt} \ge 9]y_q  \le 1,
    \end{align*}
which strengthens the original GUB cut
    \begin{align*}
        & \sum\limits_{q\in \calQ_1} \mathbbm 1[B_{qt} \ge 2]y_q  + \sum\limits_{q\in \calQ_2} \mathbbm 1[B_{qt} \ge 10]y_q  \le 1.
    \end{align*}

Problem~\eqref{strengthening} features a matroid structure and can therefore be solved via a greedy algorithm: as long as $\sum\limits_{g\in \calG_u} D_{ug} > J - |\calJ_u| + 1$, we select the fire with the largest difference between the target and the incumbent solution, i.e., $g^*\in\argmax\big\{D_{ug} - \sum\limits_{q\in \calQ_g}B_{qt}y^*_q:g \in \calG_u\big\}$, and update $D_{ug^*} \gets D_{ug^*} - 1$.

\begin{proposition} \label{matroid}
 $(S, I)$ is a matroid, with $\calS = \{(g, z) \in \calG_u \times \mathbb Z_+ : z \le D_{ug}\}$ and $I = \{T\subseteq S : |T| \le \sum\limits_{g\in \calG_u} D_{ug} - J + |\calJ_u| - 1\}$; and the elements of $I$ map into feasible solutions of Problem~\eqref{strengthening}.
\end{proposition}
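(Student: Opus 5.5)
The plan is to observe that $(\calS, I)$ is a \emph{uniform matroid}: $I$ consists of all subsets of the finite ground set $\calS$ whose cardinality is at most $r := \sum_{g\in\calG_u} D_{ug} - J + |\calJ_u| - 1$. The proof then has two parts: (i) check the three matroid axioms directly, and (ii) exhibit an explicit map from independent sets to feasible points of Problem~\eqref{strengthening}. Throughout I will use the standing hypothesis of the strengthening procedure, namely that the original cut satisfies $\sum_{g\in \calG_u} D_{ug} + |\calJ_u| > J$, which is exactly $r \ge 0$. Finiteness of $\calS$ is immediate, since each fire contributes the $D_{ug}+1$ pairs $(g,0),\ldots,(g,D_{ug})$.

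For (i), I will verify the axioms in order.
\begin{itemize}
\item Nonemptiness: $\emptyset\in I$ because $r\ge 0$.
\item Heredity: if $T'\subseteq T\in I$, then $|T'|\le|T|\le r$, so $T'\in I$.
\item Exchange: if $T_1,T_2\in I$ with $|T_1|<|T_2|$, pick any $e\in T_2\setminus T_1$; this set is nonempty by cardinality. Then $|T_1\cup\{e\}| = |T_1|+1\le |T_2|\le r$, so $T_1\cup\{e\}\in I$.
\end{itemize}
No structure of $\calS$ beyond finiteness is used, which is why this step is routine.

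For (ii), given $T\in I$, I will interpret each element $(g,z)\in T$ as one unit decrement of the target of fire $g$. Define
\[
D'_{ug} = \max\bigl(0,\ D_{ug} - |\{(g',z)\in T: g'=g\}|\bigr).
\]
I then check the constraints of Problem~\eqref{strengthening}. Integrality and nonnegativity hold by construction, and $D'_{ug}\le D_{ug}$ is clear. For the covering constraint, the total decrement is at most $|T|\le r$, so
\[
\sum_{g} D'_{ug} \ge \sum_g D_{ug} - r = J - |\calJ_u| + 1.
\]
Hence every independent set yields a feasible target vector. Conversely, any feasible $D'$ arises in this way from the set $T=\{(g,z): D'_{ug} < z \le D_{ug}\}$. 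This set lies in $I$, because its size is $\sum_g D_{ug} - \sum_g D'_{ug} \le r$.

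The only delicate point is the bookkeeping in this correspondence. Because $z=0$ is allowed in $\calS$, the clipping at $0$ in the definition of $D'_{ug}$ is needed, and it must not break the sum bound; it does not, since clipping only reduces the effective decrement. This correspondence is what justifies the greedy procedure described above Proposition~\ref{matroid}. Each greedy step adds one element, so the procedure stays within the independent sets of a uniform matroid and stops at a basis, i.e., a set of size $r$. At that point the sum constraint is tight.
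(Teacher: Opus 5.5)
Your proof is correct and follows the paper's argument. Like the paper, you note that $I$ is defined by a pure cardinality bound, with $\emptyset\in I$ coming from the cover condition $\sum_{g\in\calG_u}D_{ug}+|\calJ_u|>J$; you use the same per-fire counting map $D_{ug}-|\{(g',z)\in T:g'=g\}|$ and recover any feasible $D'$ from $T=\{(g,z):D'_{ug}<z\le D_{ug}\}$. Your clipping at $0$ is a worthwhile refinement: the paper asserts that each fire contributes exactly $D_{ug}$ pairs to $\calS$, but allowing $z=0$ gives $D_{ug}+1$, so the paper's unclipped map can output $-1$ when an independent set contains all pairs of some fire.
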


\proof{Proof of Proposition~\ref{matroid}.}
     The validity of the original cut requires $\sum\limits_{g\in\calG_u}D_{ug} - J + |\calJ_u| - 1\ge 0$, so $\emptyset\in I$. Since the independence criterion is simply a cardinality upper bound, the augmentation and subset properties are satisfied, and $(S, I)$ is a matroid. Furthermore, the function $f : I \rightarrow \Z^{\calG_u}$ defined by $$[f(T)]_{ug} = D_{ug} - |\{(g', z) \in T : g' = g\}|$$
     maps $I$ onto the feasible solutions $\calM$. To see this, note that the definition of $S$ gives $|\{(g', z) \in S : g' = g\}| = D_{ug}$, so for any $T\subseteq S$ it holds that $[f(T)]_{ug} \ge D_{ug} - D_{ug} = 0$. In addition, the inclusion criterion for $I$ guarantees that for all $T\in I$, $$
     \sum_{g\in \calG_u}[f(T)]_{ug} = \sum_{g\in \calG_u}\left(D_{ug} - |\{(g', z) \in T : g' = g\}|\right) \geq \sum_{g\in \calG_u}D_{ug} - |T| \ge J - |\calJ_u| + 1.$$ Hence, this is a feasible solution to Problem~\eqref{strengthening}. The function $f : I \rightarrow \calM$ is surjective. Indeed, for any feasible solution $\{D'_{ug}\}_{g\in \calG_u}$, the set $T = \{(g, z) \in \calG_u \times \mathbb Z_+ : D'_{ug} < z \le D_{ug}\}$ has $[f(T)]_{ug} = D_{ug}-(D_{ug}-D'_{ug})=D'_{ug}$, and $T\in I$ because $T$ has cardinality $\sum\limits_{g\in \calG_u}D_{ug} - \sum\limits_{g\in \calG_u}D'_{ug}\leq\sum\limits_{g\in \calG_u}D_{ug} - J + |\calJ_u| - 1$ per the feasibility of $\{D'_{ug}\}_{g\in \calG_u}$. Therefore, any feasible solution can be obtained from an element $T$ of the matroid $(S, I)$.
     \hfill\Halmos
     
\subsection{Proof of Theorem~\ref{thm:general_gub_aware_cut}.}
Consider a feasible solution $(\mathbf y, \mathbf z)$ to Problem~\eqref{set partition}. Let $N = \sum\limits_{g\in \calG_u}\sum\limits_{q\in \calQ_g}B_{qt}y_q$, and $M = \max(0, N - J + |\calJ_u|)\leq N$. Let $M' = \min(M, |\cal G_u|)$, and define $\calG'_u$ to be the $M'$ fires in $\calG_u$ with the largest values of $\sum\limits_{q\in \calQ_g}B_{qt}y_q$, breaking ties arbitrarily, so $|\calG'_u|=M'$.

We first prove that $\sum\limits_{g\in \calG'_u}\sum\limits_{q\in \calQ_g}B_{qt}y_q \ge M'$. Assume by contradiction that $\sum\limits_{g\in \calG'_u}\sum\limits_{q\in \calQ_g}B_{qt}y_q < |\calG'_u|$. This implies that $\sum\limits_{q\in \calQ_g}B_{qt}y_q=0$ for at least one fire $g\in\calG'_u$. By construction of $\calG'_u$, this implies that $\sum\limits_{q\in \calQ_g}B_{qt}y_q=0$ for all $g\in\calG_u\setminus\calG'_u$. Therefore,
$$\sum\limits_{g\in \calG_u}\sum\limits_{q\in \calQ_g}B_{qt}y_q=\sum\limits_{g\in \calG'_u}\sum\limits_{q\in \calQ_g}B_{qt}y_q < |\calG'_u|=M'\leq M\leq N,$$
which is a contradiction. This proves that $\sum\limits_{g\in \calG'_u}\sum\limits_{q\in \calQ_g}B_{qt}y_q \ge |\calG'_u| = M'$.

We then prove that $\sum\limits_{g\in \calG_u\setminus \calG'_u}\sum\limits_{q\in \calQ_g}B_{qt}y_q \le J - |\calJ_u|$. We separate two cases:
\begin{enumerate}
\item If $M\leq|\calG_u|$, then $M'=M$, and we have, from the previous inequalities:
$$\sum\limits_{g\in \calG_u\setminus \calG'_u}\sum\limits_{q\in \calQ_g}B_{qt}y_q = \sum\limits_{g\in \calG_u}\sum\limits_{q\in \calQ_g}B_{qt}y_q - \sum\limits_{g\in \calG'_u}\sum\limits_{q\in \calQ_g}B_{qt}y_q \le N - M$$
\item Otherwise, we have $M'=|\calG_u|$ and $\calG_u\setminus \calG'_u = \emptyset$, so $\sum\limits_{g\in \calG_u\setminus \calG'_u}\sum\limits_{q\in \calQ_g}B_{qt}y_q=0$.
\end{enumerate}
In both cases, we have $\sum\limits_{g\in \calG_u\setminus \calG'_u}\sum\limits_{q\in \calQ_g}B_{qt}y_q \le N - M$, which implies that
$$\sum\limits_{g\in \calG_u\setminus \calG'_u}\sum\limits_{q\in \calQ_g}B_{qt}y_q \le J - |\calJ_u|.$$

Define $S = \left\{\left(g, \sum\limits_{q\in \calQ_g} B_{qt}y_q\right)  : g \in \calG_u\setminus \calG'_u\right\}$. Note that $S\in\calH$ because:
$$\sum_{(g,d)\in S}d=\sum_{g \in \calG_u\setminus\calG'_u}\sum_{q\in\calQ_g}B_{qt}y_q \le J - |\calJ_u|,$$
and, for any $g\in\calG_u$:
$$|\{(g', d) \in S : g' = g\}|=\begin{cases}
    1&\text{if $g\in\calG_u\setminus\calG'_u$}\\
    0&\text{if $g\in\calG'_u$}
\end{cases}$$
Therefore, the condition of the theorem gives 
$$\sum\limits_{(g, d) \in S} \delta_{ugd} =\sum_{g \in \calG_u\setminus \calG'_u}\delta_{ugd(g)}\le K_u,$$
where $d(g) = \sum\limits_{q\in \calQ_g} B_{qt}y_q$. We obtain:
\begin{equation}
    \label{proof1}
    \sum_{g\in \calG_u\setminus \calG'_u}\sum_{q\in \calQ_g}\sum_{d=0}^{J} \mathbbm 1[B_{qt} = d]\delta_{ugd}y_q = \sum_{g\in \calG_u\setminus \calG'_u} \delta_{ugd(g)} \le K_u.
\end{equation}

Meanwhile, 
\begin{equation}
    \label{proof2}
    \sum_{g\in \calG'_u}\sum_{q\in \calQ_g}\sum_{d=0}^{J} \mathbbm 1[B_{qt} = d]\delta_{ugd}y_q 
    \le \sum_{g\in \calG'_u}\sum_{q\in \calQ_g}\sum_{d=0}^{J} \mathbbm 1[B_{qt} = d]y_q
    = \sum_{g\in \calG'_u}\sum_{q\in \calQ_g} y_q
    = |G'_u|,
\end{equation}
where the inequality uses $\delta_{ugd} \le 1$, the first equality follows from the fact that $B_{qt}\leq J$ (Lemma~\ref{lem fire plan}) and the last equality follows from Constraints~\eqref{supp_plan}. Moreover,
\begin{align*}
    \sum_{j\in \calJ_u}\sum_{p\in \calP_j}\mathbbm 1\left[\sum_{g\in \calG_u}A_{pgt} = 0\right]z_p
    &\leq \sum_{j\in \calJ}\sum_{p\in \calP_j}\mathbbm 1\left[\sum_{g\in \calG_u}A_{pgt} = 0\right]z_p\\
    &= \sum_{j\in \calJ}\sum_{p\in \calP_j}z_p-\sum_{j\in \calJ}\sum_{p\in \calP_j}\sum_{g\in \calG_u}A_{pgt} z_p\\
    &= J - \sum_{g\in \calG_u}\sum_{j\in \calJ}\sum_{p\in \calP_j}A_{pgt}z_p \\
    &\le J - \sum_{g\in \calG_u}\sum_{q\in \calQ_g}B_{qt}y_q \\
    &= J - N,
\end{align*}
where the first inequality follows from the fact that $\calJ_u\subseteq\calJ$, the first equality uses the fact that $\sum\limits_{g\in \calG_u}A_{pgt}\in\{0,1\}$ (Lemma~\ref{lem crew route}), the second equality uses Constraints~\eqref{crew_route}, the second inequality uses Constraints~\eqref{link set partition}, and the last equality uses the definition of $N$. Furthermore, 
\begin{equation}
\label{proof3}
    \sum_{j\in \calJ_u}\sum_{p\in \calP_j}\mathbbm 1\left[\sum_{g\in \calG_u}A_{pgt} = 0\right]z_p \le \sum_{j\in \calJ_u}\sum_{p\in \calP_j}z_p = |\calJ_u|.
\end{equation}

Hence, we have from Equations~\eqref{proof1}--\eqref{proof3}:
\begin{align*}
    &\sum_{g\in \calG_u}\sum_{q\in \calQ_g}\sum_{d=0}^{J} \mathbbm 1[B_{qt} = d]\delta_{ugd}y_q + \sum_{j\in \calJ_u}\sum_{p\in \calP_j}\mathbbm 1\left[\sum_{g\in \calG_u}A_{pgt} = 0\right]z_p \\
    \qquad&= \sum_{g\in \calG_u\setminus\calG'_u}\sum_{q\in \calQ_g}\sum_{d=0}^{J} \mathbbm 1[B_{qt} = d]\delta_{ugd}y_q + \sum_{g\in \calG'_u}\sum_{q\in \calQ_g}\sum_{d=0}^{J} \mathbbm 1[B_{qt} = d]\delta_{ugd}y_q + \sum_{j\in \calJ_u}\sum_{p\in \calP_j}\mathbbm 1\left[\sum_{g\in \calG_u}A_{pgt} = 0\right]z_p \\
    \qquad&\le K_u + |\calG'_u| + \min(|\calJ_u|, J - N) \\
    \qquad&\le K_u + M + \min(|\calJ_u|, J - N) \\
    \qquad&= K_u + \max(0, N - J + |\calJ_u|) + \min(|\calJ_u|, J - N) \\
    \qquad&= K_u + |\calJ_u|
\end{align*}
This completes the proof.
\hfill\Halmos
        
\subsection{Proof of Proposition~\ref{prop:CGLP superset}.}
We show that any inequality constructed by inputs to Theorem~\ref{thm::GUB cover} can also be constructed by inputs to Theorem~\ref{thm:general_gub_aware_cut}. Let $t\in \calT$, $\calG_u \subseteq \calG$, $\calJ_u \subseteq \calJ$, and for all $g\in \calG_u$, let $D_{ug}\in \mathbb Z_+$ be such that
$$\sum_{g\in \calG_u} D_{ug} + |\calJ_u|> J.$$

Define $K_u = |\calG_u| - 1$, and for all $(g, d)\in {\mathcal{G}_u\times\{0, \ldots, J\}}$, define $\delta_{ugd} = \mathbbm 1[d \ge D_{ug}]$. We now check that $(t, \calG_u, \calJ_u, \{\delta_{ugd}\}_{\mathcal{G}_u\times\{0, \ldots, J\}}, K_u)$ satisfy the conditions of Theorem~\ref{thm:general_gub_aware_cut}.

By construction, $\delta_{ugd} \le 1$ for all $g, d$. Consider $S\in\calH$, i.e., $S \subseteq\mathcal{G}_u\times\{0, \ldots, J\}$ where $\sum\limits_{(g, d)\in S} d \le J - |\calJ_u|$ and for all $g\in \mathcal{G}_u$, $|\{(g', d) \in S : g' = g\}| \le 1$. We have:
\begin{align*}
    \sum\limits_{(g, d) \in S} \delta_{ugd} &= \sum\limits_{g\in \cal
    G_u}\sum\limits_{\{d \in \{0, \ldots, J\} : (g, d) \in S\}} \mathbbm 1[d \ge D_{ug}].
\end{align*}

This sum has as most $|\calG_u|$ terms because for all $g\in \mathcal{G}_u$, $|\{(g', d) \in S : g' = g\}| \le 1$, so $\sum\limits_{(g, d) \in S} \delta_{ugd}\leq|\calG_u|$. If it is equal to $|\calG_u|$, then each fire $g\in\calG_u$ is associated with exactly one value of $d(g)$ such that $(g,d(g))\in S$, and this value satisfies $d(g)\geq D_{ug}$. Therefore, $\sum\limits_{(g, d) \in S} d \ge \sum\limits_{g\in \calG_u} D_{ug} > J - |\calJ_u|$, leading to a contradiction. Hence, $\sum\limits_{(g, d) \in S} \delta_{ugd} \le |\calG_u| - 1=K_u$.

From this set $S$, we obtain the following A-GUB cut:
$$\sum_{g\in \calG_u}\sum_{q\in \calQ_g}\sum_{d=0}^{J} \mathbbm 1[B_{qt} = d]\delta_{ugd}y_q + \sum_{j\in \calJ_u}\sum_{p\in \calP_j}\mathbbm 1\left[\sum_{g\in \calG_u}A_{pgt} = 0\right]z_p \le |\calJ_u| + K_u = |\calJ_u| + |\calG_u| - 1.$$
Note, moreover, that
$$\sum_{g\in \calG_u}\sum_{q\in \calQ_g}\sum_{d=0}^{J} \mathbbm 1[B_{qt} = d]\delta_{ugd}y_q= \sum_{g\in \calG_u}\sum_{q\in \calQ_g} \sum_{d=0}^{J} (\mathbbm 1[B_{qt} = d]\cdot\mathbbm 1[d \ge D_{ug}])y_q=\sum_{g\in \calG_u}\sum_{q\in \calQ_g} \mathbbm 1[B_{qt} \ge D_{ug}]y_q$$
Hence, the A-GUB cut is exactly equivalent to the original GUB cut:
$$\sum_{g\in \calG_u}\sum_{q\in \calQ_g} \mathbbm 1[B_{qt} \ge D_{ug}]y_q + \sum_{j\in \calJ_u}\sum_{p\in \calP_j}\mathbbm 1\left[\sum_{g\in \calG_u}A_{pgt} = 0\right]z_p \le |\calG_u| + |\calJ_u| - 1.$$
Thus, any GUB cover cut can be written as an A-GUB cut, completing the proof.
\hfill\Halmos

\subsection{Proof of Proposition~\ref{prop:min_var}.}

We make use of the following lemma.

\begin{lemma} \label{lem:cut feasible}
    If $v(g,t)>0$, there exist $q_-\in \calQ_g$ with $y^*_{q_-} > 0$ and $B_{q_-,t} \le \left\lfloor\sum\limits_{q\in \calQ_g} B_{qt}y^*_q\right\rfloor$, and $q_+\in \calQ_g$ with $y^*_{q_+} > 0$ and $B_{q_+,t} > \left\lfloor\sum\limits_{q\in \calQ_g} B_{qt}y^*_q\right\rfloor$. Similarly, if $w(j,g,t)>0$, there exist $p_-\in \calP_j$ with $z^*_{p_-} > 0$ and $A_{p_-,g,t} \le \left\lfloor\sum\limits_{p\in \calP_j} A_{pgt}z^*_p\right\rfloor$, and $p_+\in \calP_j$ with $z^*_{p_+} > 0$ and $A_{p_+,g,t} > \left\lfloor\sum\limits_{p\in \calP_j} A_{pgt}z^*_p\right\rfloor$.
\end{lemma}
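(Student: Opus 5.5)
The plan is to view the fractional weights as a probability distribution and use the fact that a random variable with positive variance cannot be almost surely constant. Fix $g,t$ with $v(g,t)>0$. Consider the random variable $X$ that takes value $B_{qt}$ with probability $y^*_q$ for $q\in\calQ_g$. This is a distribution because $\sum_{q\in\calQ_g}y^*_q=1$ by Constraint~\eqref{supp_plan}, and $y^*\geq 0$. Then $\mu:=\sum_q B_{qt}y^*_q=\E[X]$ and $v(g,t)=\mathrm{Var}(X)$. Let $k=\lfloor\mu\rfloor$. The claim is that $X$ puts positive mass both on $\{X\le k\}$ and on $\{X>k\}$; we argue each side by contradiction.

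First, suppose every $q$ in the support has $B_{qt}>k$. Since the $B_{qt}$ are integers (Lemma~\ref{lem fire plan}), this gives $B_{qt}\ge k+1$ for all such $q$. Hence $\mu\ge k+1>\mu$, which is a contradiction. This yields $q_-$.

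Second, suppose every $q$ in the support has $B_{qt}\le k\le\mu$. Then $X\le\mu$ almost surely and $\E[X]=\mu$. It follows that $X=\mu$ almost surely, so $\mathrm{Var}(X)=0$, which contradicts $v(g,t)>0$. This yields $q_+$.

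The crew case is identical. We use the variable $A_{pgt}$ with weights $z^*_p$ for $p\in\calP_j$; these sum to $1$ by Constraint~\eqref{crew_route}. The values $A_{pgt}\in\{0,1\}$ are integers by Lemma~\ref{lem crew route}, so the same two contradictions apply and produce $p_-$ and $p_+$.

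The only delicate point is the second step. There we need the one-sided bound $X\le\mu$ together with mean equality to force $X$ to be constant. This holds because $\sum_q y^*_q(\mu-B_{qt})=0$ is a sum of nonnegative terms, so each term with $y^*_q>0$ vanishes. That in turn makes the variance $\sum_q y^*_q(B_{qt}-\mu)^2$ equal to zero. I would write this explicitly rather than appeal to probability language. The same identity also shows the expression defining $v(g,t)$ equals this centered sum, since $\sum_q B_{qt}^2y^*_q-\mu^2=\sum_q y^*_q(B_{qt}-\mu)^2$ when the weights sum to one.
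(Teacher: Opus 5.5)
Your proof is correct and follows essentially the same route as the paper: both view $y^*$ (resp.\ $z^*$) as a probability distribution over the integer values $B_{qt}$ (resp.\ $A_{pgt}$), use positive variance to rule out the support lying entirely on one side of the mean $\mu$, and use integrality to pass from $\mu$ to $\lfloor\mu\rfloor$. Your version is somewhat more explicit than the paper's. You note that $q_-$ exists by integrality alone, and you spell out why $X\le\mu$ together with $\E[X]=\mu$ forces zero variance; the paper leaves both points implicit.
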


\paragraph{Proof of Lemma~\ref{lem:cut feasible}.}
Using a probabilistic interpretation, define the random variable $X_{gt}$ equal to $B_{qt}$ with probability $y^*_q$ for each $q\in \calQ_g$. This is a valid probability distribution because of Constraints~\eqref{supp_planMP} and~\eqref{fire nonneg constrMP}. The condition of the proposition becomes $\text{var}(X_{gt}) > 0$. This means that $X_{gt}$ takes on some value less than $\mathbb E[X_{gt}] = \sum\limits_{q\in \calQ_g} B_{qt}y^*_q$ with positive probability and some value greater than $\sum\limits_{q\in \calQ_g} B_{qt}y^*_q$ with positive probability. This implies the existence of some $q_-\in \calQ_g$ with $y^*_{q_-} > 0$ and $B_{q_-,t} \le \sum\limits_{q\in \calQ_g} B_{qt}y^*_q$ and some $q_+\in \calQ_g$ with $y^*_{q_+} > 0$ and $B_{q_+,t} > \sum\limits_{q\in \calQ_g} B_{qt}y^*_q$. Per the integrality of  $B_{q_-t}$, we have $B_{q_-,t} \le \left\lfloor\sum\limits_{q\in \calQ_g} B_{qt}y^*_q\right\rfloor$ and $B_{q_+,t} > \left\lfloor\sum\limits_{q\in \calQ_g} B_{qt}y^*_q\right\rfloor$. This proves the first result. We proceed similarly for the second one.
\hfill\Halmos

\paragraph{Proof of Proposition~\ref{prop:min_var}.}

We again use the probabilistic interpretation of the natural variables. Define the random variable $X_{gt}$ to equal $B_{qt}$ with probability $y^*_q$ for all $q\in \calQ_g$, and define the random variable $Y_{jgt}$ to equal $A_{pgt}$ with probability $z^*_p$ for all $p\in \calP_j$. These are valid probabilities because of Constraints~\eqref{supp_planMP},~\eqref{crew_routeMP},~\eqref{crew nonneg constrMP} and~\eqref{fire nonneg constrMP}. The condition of the proposition becomes $\text{var}(X_{gt}) = 0$ and $\text{var}(Y_{jgt}) = 0$. This is equivalent to saying $X_{gt}$ and $Y_{jgt}$ are constant, which is equivalent to the second condition of the proposition based on the probabilistic definitions of  $X_{gt}$ and $Y_{jgt}$.
\hfill\Halmos

\subsection{Details on upper-bounding heuristic}
\label{app:heuristic}

Algorithm~\ref{alg:heuristic} details the upper-bounding heuristic applied throughout the two-sided branch-and-price algorithm. Assume that a fire is assigned, in the linear relaxation, to plan $q_1$ assigning 10 crews in period 1 and 0 in period 2, and to plan $q_2$ assigning no crew in period 1 and 10 crews in period 2. Intuitively, we might want to assign 5 crews in periods 1 and 2 to that fire. The heuristic follows that logic, and then iteratively increases the number of allowed crews (e.g., 6, 7 crews, etc.).

\begin{algorithm} [h!]
\caption{\textsc{FireDemandHeuristic}.}\small
\label{alg:heuristic}
\begin{algorithmic}
\item \textbf{Initialization:} cost $c\gets\infty$, fractional solution $(\by^*,\bz^*)$, fire demands $\overline{D}_{gt}=\left\lceil\sum\limits_{q\in \calQ_g}B_{qt}y^*_q\right\rceil$, $\calP'\gets \emptyset$, $\calQ'\gets \emptyset$.
\item Iterate over Steps 1-3 until termination criterion (e.g., improvements get too small).
\begin{itemize}
    \item[] \textbf{Step 1.} Solve relaxation with \textsc{TwoSidedColumnGeneration} (Algorithm~\ref{alg:2CG_verbose}) and cutting planes (Section~\ref{section:cutting planes}), subject to $\sum\limits_{q\in \calQ_g }\mathbbm{1}[B_{qt} \le \overline{D}_{gt}]y_q = 1,\ \forall g\in\calG,\forall t\in\calT$; append columns to $\calP'$ and $\calQ'$.
    \item[] \textbf{Step 2.} Solve master problem with integrality constraints. Update cost $c$ and solution $\by, \bz$.
    \item[] \textbf{Step 3.} Increment $\overline{D}_{gt}\gets\overline{D}_{gt}+1,\ \forall g\in\calG,\forall t\in\calT$, and go to Step 1.
\end{itemize}
\end{algorithmic}
\end{algorithm}

\subsection{Proof of Theorem~\ref{thm:validity of branch-and-price-and-cut}.}
The two-sided column generation and cutting planes procedures (Step 1 of Algorithm~\ref{alg:branch-and-price-and-cut}) converge finitely because (i) any new cut renders the incumbent primal solution infeasible and any new variable renders the incumbent dual solution infeasible, so the algorithm does not cycle; and (ii) the number of variables and cuts under consideration are finite. It therefore suffices to show that the algorithm explores finitely many nodes in the branching tree. Since there are finitely many possible values of $\left\lfloor\sum\limits_{q\in \calQ_g} B_{qt}y^*_q\right\rfloor$ for all $g\in \calG$ (namely, $\{0, \ldots, J\}$) and finitely many possible values of $\left\lfloor\sum\limits_{p\in \calP_j} A_{pgt}z^*_p\right\rfloor$ for all $g\in \calG$, $j\in \calJ$ (namely, $\{0, 1\}$), there are finitely many possible branching rules. Furthermore, the maximum possible depth of the tree is upper-bounded by the number of branching rules and no branching rule can be duplicated, since any new branching eliminates a feasible fractional solution. This completes the proof.
\hfill\Halmos

\section{Details on fire spread model}

\subsection{Detailed Feature Description}
\label{app:features}

Table~\ref{tab:feature_details} describes the covariates used in the predictive models. These features were selected to capture wildfire dynamics, including its current state, its recent behavior, the environmental context, and the nature of the fire itself.

\begin{table}[p]
\centering
\caption{Detailed Description of Model Covariates.}
\label{tab:feature_details}
\small\renewcommand{\arraystretch}{1.1}
\begin{tabularx}{\textwidth}{lX}
\toprule\toprule
\textbf{Feature Group} & \textbf{Description and Rationale} \\
\cmidrule(r){1-2}
\multicolumn{2}{l}{\textit{Core Dynamic Features (SIT-209)}} \\
\midrule
Current Area & The total area burned in acres at the start of the day. \textbf{Rationale:} Primary measure of fire size, included in the state variable $S_{gt}$. \\
Growth Rate & The increase in burned area from the previous day. \textbf{Rationale:} Captures the fire's recent momentum, distinguishing rapidly accelerating fires from stable or decelerating ones. \\
Duration & The number of days since the fire was first reported. \textbf{Rationale:} Acts as a proxy for the fire's maturity and potential for resource fatigue or containment progress. \\
\midrule
\multicolumn{2}{l}{\textit{Temporal and Categorical Features (SIT-209)}} \\
\midrule
Month & Categorical feature for the month of the year. \textbf{Rationale:} Captures seasonal variations in climate, weather patterns, and fuel moisture that influence fire behavior. \\
Fire Cause & 4 one-hot encoded categories: Human, Lightning, Other, Undetermined. \textbf{Rationale:} The origin of the fire can influence initial spread characteristics and typical location (e.g., wildland-urban interface versus remote wilderness). \\
Suppression Method & 5 one-hot encoded categories describing the primary management strategy (Full Suppression, Confine, Monitor, Point/Zone
Protection, and Multiple Management Strategy). \textbf{Rationale:} Reflects the intended suppression posture, which correlates with resource allocation decisions. \\
\midrule
\multicolumn{2}{l}{\textit{Behavioral Features (SIT-209)}} \\
\midrule
Fuel Type & 13 one-hot encoded categories from the National Fire Danger Rating System (NFDRS) fuel models, covering grass, brush, timber, and logging slash. \textbf{Rationale:} Fuel is a primary driver of fire intensity and rate of spread. Different fuel types present unique suppression challenges. \\
Fire Behavior & 4 one-hot encoded categories: Minimal, Moderate, Active, Extreme. \textbf{Rationale:} Provides a high-level summary of the fire's current intensity and predictability, as reported by on-the-ground personnel. \\
Granular Behavior & 16 one-hot encoded attributes describing specific phenomena: e.g., Backing, Creeping, Crowning, Flanking, Running, Smoldering, Spotting, Torching, Uphill Runs, and Wind-Driven Runs. \textbf{Rationale:} Provide detailed, observational data on how the fire is spreading, offering more nuanced visibility into fire behavior. \\
\midrule
\multicolumn{2}{l}{\textit{ERA5 Features}} \\
\midrule
Meteorology & 10 daily variables at 10km resolution: u-wind, v-wind, max temperature, dewpoint temperature, total precipitation, surface net solar radiation, soil water, soil temperature, leaf area index, total evaporation. \textbf{Rationale:} Provides quantitative data on the local weather conditions that govern fire behavior.\\
\midrule
\multicolumn{2}{l}{\textit{LandFire Features}} \\
\midrule
Fuel Coverage & Percentage coverage of each fuel category within a 10km x 10km grid cell. Categories include the 13 Anderson fire behavior fuel models and 5 non-burnable types (Urban, Snow/Ice, Agriculture, Water, and Barren). \textbf{Rationale:} Provides spatially explicit fuel characterization that complements the coarser SIT-209 fuel covariates.\\
Terrain & Mean elevation, mean slope, and standard deviation of slope. \textbf{Rationale:} Terrain affects fire spread direction and rate.\\
Vegetation & Percent tree cover, percent herbaceous cover, and mean canopy base height. \textbf{Rationale:} Vegetation affects fire type (surface versus crown fire).\\
\midrule
\multicolumn{2}{l}{\textit{AlphaEarth Features}} \\
\midrule
Embeddings & 64 learned remote sensing embedding dimensions at 5km resolution derived from the AlphaEarth foundation model for Earth observation imagery. \textbf{Rationale:} These embeddings encode many different sources, including optical satellite images, radar, 3D laser mapping, and climate simulations that provide extra information about the fire's location.\\
\bottomrule\bottomrule
\end{tabularx}
\end{table}

\subsection{IHC equivalents from resource allocation data}
\label{app:personnel_scaling}

The predictive and prescriptive components use suppression effort at different levels of aggregation. The historical SIT-209 data report suppression effort across multiple resource categories and total incident personnel. The optimization model, in contrast, assigns IHC crews. This choice avoids higher-dimensional assignment variables, which would hinder tractability in the optimization problem---a complex problem due to the number of path-based variables and linking constraints between fire demand and crew supply---and further complicate the predictive problem---a challenging problem due to the noisy response and confounding. We therefore introduce an IHC-equivalent scale that maps total personnel into the crew units used by the optimization model.

Let \(P_{it}\) denote total incident personnel assigned to fire \(i\) on day \(t\). Let \(P^{\mathrm{IHC}}_{it}\) denote the personnel reported in the IHC category ``type 1''). Since an IHC crew contains approximately 20 personnel, we first define the reported number of IHC crews as $C^{\mathrm{IHC}}_{it}=P^{\mathrm{T1}}_{it}/20$. We then estimate the staffing-equivalent value of one IHC crew using a linear regression specification with fire fixed effects:
\[
P_{it}=\alpha_i+\kappa C^{\mathrm{IHC}}_{it}+\varepsilon_{it}
\]
The fixed effects absorb persistent differences across fires, so \(\kappa\) is identified from within-fire changes over time. Thus, \(\kappa\) measures the average change in total incident personnel associated with one additional reported IHC crew on the same fire.

This regression yields \(\widehat{\kappa}=63\) personnel. We use this estimate as a staffing-equivalent scale factor. This estimate should not be interpreted as saying that an IHC contains 63 firefighters; rather, it reflects that, in the historical data, the deployment of an additional IHC is typically accompanied by additional staffing and support resources on the same incident.

We therefore define the IHC-equivalent treatment variable as $T_{it}=P_{it}/\widehat{\kappa}$. We use this conversion as a deterministic linear rescaling in the predictive model to report the learned treatment-response function in IHC-equivalent units, and to preserve consistency between the treatment variable in the predictive model an the crew assignment decisions in the optimization model. Thus, a decision of $x$ IHC-equivalent crews in the optimization translates into a personnel level of $\widehat{\kappa}\cdot x$ in the data.

\subsection{Second-stage ensemble model}
\label{app:ensemble}

We apply four first-stage LightGBM models with varying feature sets (i) SIT-209; (ii) SIT-209 and ERA5; (iii) SIT-209, ERA5 and LandFire; and (iv) SIT-209, ERA5, LandFire and AlphaEarth embeddings. All four models share the same training/test split, the same GroupKFold cross-fitting procedure, and the same LGBM methodology. We then leverage these four models to build an ensemble treatment-response model with skip-zero averaging, as follows:
\begin{enumerate}
\item Each model $k\in\{1,2,3,4\}$ generates its treatment-response curve, denoted by $\widehat{Y}^{(k)}_{is}(t)$ for fire $i$ in period $s$, following the procedure described in Section~\ref{subsec:DML}.
\item Each curve is classified as zero effect if the total treatment range is below a threshold, i.e., if $\widehat{Y}^{(k)}_{is}(0)-\widehat{Y}^{(k)}_{is}(J)\leq\varepsilon$ where $\varepsilon = 1$ acre. A zero-effect classification indicates that the second-stage models use the treatment residual at tree splits only sparingly, resulting in a flat prediction across all crew allocations for that fire.
\item Define the active set $\calS_{is}$ as the set of models $k\in\{1,2,3,4\}$ with a non-zero effect for fire $i$ in period $s$. The ensemble prediction is then given as the average over non-zero models, with a fallback to the full average in case all models yield a zero effect:
\begin{align*}
	\widehat{Y}^{(\text{ens})}_{is}(t)=\begin{cases}
		\frac{1}{|\calS_{is}|}\sum_{k\in\calS_{is}}\widehat{Y}^{(k)}_{is}(t)&\text{if $\calS_{is}\neq\emptyset$}\\
		\frac{1}{4}\sum_{k=1}^4\widehat{Y}^{(k)}_{is}(t)&\text{otherwise}
        \end{cases}
\end{align*}
\end{enumerate}

Empirically, the zero-effect rate varies substantially across models, ranging from $3\%$ of fires in the smallest model with 46 SIT-209 covariates to $38\%$ in the largest model (144 covariates: 46 SIT-209 base, 10 ERA5, 24 LandFire, and 64 AlphaEarth embedding dimensions). Crucially, the zero-effect set is not the same across models: a fire flagged as flat by one model is often non-flat under another. The skip-zero ensembling approach described above exploits this complementarity: for any given fire and period, the ensemble uses only those models that successfully captured a treatment signal, discarding the flat predictions that would dilute the ensemble's treatment sensitivity. The fallback clause handles the rare case where all four models predict a flat treatment-response curve; in practice, this occurs for $0.1\%$ of fire-day observations in the test set.

\subsection{Constructing the fire spread model from the predictive model}
\label{app:firenetwork}

We translate the DML predictions into a time-state network, as follows:
\begin{enumerate}
    \item \textbf{State space discretization:} Recall that the state variable is modeled as a two-dimensional vector characterizing the current area burned and the momentum defined as the previous day's growth. The continuous state variable is discretized into a finite set of states $\calS_g$ using a piecewise uniform grid. This grid uses fine-grained bins for smaller areas that increase for larger areas (Table \ref{tab:discretization_bins}). This discretization manages the state space size while retaining granularity for the more common fires. We proceed similarly for momentum. We add terminal ``extinguished" states that are unique to the area at which the fire was extinguished. Altogether, we obtain billions of possible states with a  $51,031\times51,031$ discretization grid; in practice, time-state networks comprise thousands of states.

    \begin{table}[h]
    \centering
    \small
    \caption{Fire-area discretization bins}
    \label{tab:discretization_bins}
    \begin{tabular}{lrrr}
    \toprule
    Area range (acres)              & Step size (acres) & \# of bins \\
    \midrule
    1\,--\,100                      &  2                &        50  \\
    100\,--\,10{,}000               &  5                &     1{,}981  \\
    10{,}000\,--\,500{,}000         & 10                &    49{,}000  \\
    \midrule
    \textbf{Total (unique)}         &                   & \textbf{51{,}031} \\
    \bottomrule
    \end{tabular}
    \end{table}

    \item \textbf{Node generation:} For any fire, the algorithm iterates over the planning horizon $t \in \{1, \dots, T\}$. At each period $t$, it considers all reachable states from the previous period.

    \item \textbf{Arc creation:} For each active state $S_{g,t}$, the algorithm constructs a feature vector representing the fire's condition (e.g., current area, growth history, fuel type). It then calls the DML model to predict the next-day growth for a range of IHC-equivalent crew assignment levels, which define the values of the decision variable $x_{gt}$ in the dynamic model. The DML model yields the predicted next-day area and momentum. These quantities are mapped to the nearest values in the discretized state space, yielding the next state $S_{g,t+1}$. An arc is then created from state $S_{g,t}$ to state $S_{g,t+1}$, associated with the assignment level $x_{gt}$. Since multiple personnel assignments might lead to the same discretized state, we eliminate dominated transitions by only storing the arc corresponding to the minimum number of IHC-equivalent crews required.
\end{enumerate}

\subsection{A simple fire spread model}
\label{subsec:erinmodel}

To test the scalability of our optimization methodology in a controlled environment, we make use of the fire spread model from \cite{donovan2003integer} and \cite{wei2015chance}. In this model, fire damage grows proportionally to the active burning perimeter at each time step, and each crew decreases the perimeter by a fixed amount at each time period. We denote by $P$ the starting perimeter of the fire; by $R_t$ the ratio of its perimeter at time $t+1$ to its perimeter at time $t$, absent any suppression activities; and by $E_t$ the effectiveness of a crew at time $t$ (in units of perimeter suppressed). The decision variables are:
\begin{align*}
    s_t:\ &\text{number of crews suppressing the fire at time $t$}\\
    \ell_t:\ &\text{perimeter suppressed between times $t$ and $t+1$}\\
    p_t:\ &\text{perimeter of fire at time $t$}\\
    a_t:\ &\text{area burned by time $t$}
\end{align*}

The model minimizes total area burned by the end of the time horizon (Equation~\eqref{erin_model::obj}). Constraints~\eqref{erin_model::suppression}--\eqref{perimeter start} govern the growth of the fire perimeter, and Constraints~\eqref{area} and Constraints~\eqref{area start} translate the active fire perimeter to burned area. 
    \begin{mini!}
    {}{a_{T}}{\label{model::erin_model}}{} \label{erin_model::obj}
        \addConstraint{\ell_{t} \le E_{t}s_{t},\,\forall t\in \calT} \label{erin_model::suppression}
    \addConstraint{p_{t+1} \ge R_{t}\left(p_{t} - \frac{\ell_{t}}{2}\right) - \frac{\ell_{t}}{2},\, \forall t\in \calT} \label{perimeter growth}
    \addConstraint{p_{t} \ge 0, \, \forall t\in \calT} \label{perimeter positive}
    \addConstraint{p_{1} = P} \label{perimeter start}
    \addConstraint{a_{t+1} \ge a_{t} + \frac{p_{t} + p_{t+1}}{2},\, \forall t\in \calT} \label{area} 
    \addConstraint{a_{1} \ge 0} \label{area start}
    \addConstraint{s_t \in \mathbb Z,\, \forall{t}\in \calT.} 
    \end{mini!}

\section{Additional results}
\label{app:results}




\subsection{Predictive results}
\label{app:predictiveresults}

Tables~\ref{tab:nuisanceYdetails} and~\ref{tab:nuisanceTdetails} report detailed results of the nuisance models using different predictive models. These results confirm that LGBM yields the strongest predictive performance for predicting the outcome variable in the first stage; for predicting the treatment, best performance is generally obtained with LGBM and XGBoost. We use LGBM throughout the paper as a result.

\begin{table}[h!]
\centering
\caption{First-stage performance across feature configurations for predicting the outcome variable}
\label{tab:nuisanceYdetails}
\footnotesize
\begin{tabular}{ccccccrrrr}
\toprule
\multicolumn{5}{c}{Feature Sources} & \multirow{2}{*}{Model} & \multirow{2}{*}{RMSE} & \multirow{2}{*}{MAE} & \multirow{2}{*}{$R^2$} & \multirow{2}{*}{CV RMSE} \\
\cmidrule(lr){1-5}
\#Feat & SIT209 & ERA5 & LandFire & AlphaEarth &  &  &  &  &  \\
\midrule
 &  &  &  &  & LGBM & 536 & \textbf{379} & 0.268 & 590 \\
 &  &  &  &  & RF & \textbf{533} & 393 & \textbf{0.275} & \textbf{588} \\
 &  &  &  &  & XGB & 541 & 381 & 0.254 & 590 \\
46 & \checkmark &  &  &  & OLS & 651 & 456 & -0.079 & 628 \\
 &  &  &  &  & Lasso & 650 & 457 & -0.077 & 628 \\
 &  &  &  &  & Ridge & 650 & 457 & -0.076 & 628 \\
 &  &  &  &  & ENet & 650 & 458 & -0.075 & 628 \\
\midrule
 &  &  &  &  & LGBM & \textbf{534} & \textbf{375} & \textbf{0.272} & \textbf{588} \\
 &  &  &  &  & RF & 538 & 398 & 0.263 & 588 \\
 &  &  &  &  & XGB & 539 & 379 & 0.258 & 592 \\
56 & \checkmark & \checkmark &  &  & OLS & 647 & 450 & -0.065 & 627 \\
 &  &  &  &  & Lasso & 644 & 449 & -0.058 & 626 \\
 &  &  &  &  & Ridge & 633 & 451 & -0.022 & 625 \\
 &  &  &  &  & ENet & 630 & 450 & -0.012 & 625 \\
\midrule
 &  &  &  &  & LGBM & \textbf{530} & 370 & \textbf{0.284} & \textbf{585} \\
 &  &  &  &  & RF & 549 & 422 & 0.232 & 592 \\
 &  &  &  &  & XGB & 531 & \textbf{370} & 0.282 & 587 \\
80 & \checkmark & \checkmark & \checkmark &  & OLS & 650 & 452 & -0.077 & 627 \\
 &  &  &  &  & Lasso & 647 & 449 & -0.066 & 626 \\
 &  &  &  &  & Ridge & 635 & 450 & -0.028 & 624 \\
 &  &  &  &  & ENet & 632 & 450 & -0.018 & 624 \\
\midrule
 &  &  &  &  & LGBM & \textbf{526} & \textbf{367} & \textbf{0.295} & \textbf{582} \\
 &  &  &  &  & RF & 540 & 400 & 0.256 & 600 \\
 &  &  &  &  & XGB & 527 & 367 & 0.293 & 584 \\
144 & \checkmark & \checkmark & \checkmark & \checkmark & OLS & 612 & 418 & 0.044 & 630 \\
 &  &  &  &  & Lasso & 625 & 430 & 0.003 & 620 \\
 &  &  &  &  & Ridge & 609 & 424 & 0.055 & 619 \\
 &  &  &  &  & ENet & 608 & 424 & 0.058 & 619 \\
\bottomrule
\end{tabular}
\end{table}

\begin{table}[h!]
\centering
\caption{First-stage performance across feature configurations for predicting the treatment variable}
\label{tab:nuisanceTdetails}
\footnotesize
\begin{tabular}{ccccccrrrr}
\toprule
\multicolumn{5}{c}{Feature Sources} & \multirow{2}{*}{Model} & \multirow{2}{*}{RMSE} & \multirow{2}{*}{MAE} & \multirow{2}{*}{$R^2$} & \multirow{2}{*}{CV RMSE} \\
\cmidrule(lr){1-5}
\#Feat & SIT209 & ERA5 & LandFire & AlphaEarth &  &  &  &  &  \\
\midrule
 &  &  &  &  & LGBM & 6.10 & 2.33 & 0.462 & \textbf{5.90} \\
 &  &  &  &  & RF & 6.19 & 2.41 & 0.446 & 6.24 \\
 &  &  &  &  & XGB & \textbf{5.81} & \textbf{2.29} & \textbf{0.513} & 6.11 \\
46 & \checkmark &  &  &  & OLS & 8.22 & 3.17 & 0.025 & 6.76 \\
 &  &  &  &  & Lasso & 8.22 & 3.14 & 0.022 & 6.75 \\
 &  &  &  &  & Ridge & 8.17 & 3.19 & 0.034 & 6.76 \\
 &  &  &  &  & ENet & 8.17 & 3.17 & 0.034 & 6.76 \\
\midrule
 &  &  &  &  & LGBM & 4.95 & 2.06 & 0.646 & 5.76 \\
 &  &  &  &  & RF & 5.94 & 2.30 & 0.491 & 6.37 \\
 &  &  &  &  & XGB & \textbf{4.86} & \textbf{1.98} & \textbf{0.659} & \textbf{5.73} \\
56 & \checkmark & \checkmark &  &  & OLS & 8.13 & 3.08 & 0.045 & 6.81 \\
 &  &  &  &  & Lasso & 8.16 & 3.02 & 0.039 & 6.73 \\
 &  &  &  &  & Ridge & 7.94 & 3.10 & 0.090 & 6.79 \\
 &  &  &  &  & ENet & 7.95 & 3.08 & 0.086 & 6.79 \\
\midrule
 &  &  &  &  & LGBM & \textbf{4.90} & \textbf{2.19} & \textbf{0.653} & 6.13 \\
 &  &  &  &  & RF & 6.10 & 2.44 & 0.464 & 6.40 \\
 &  &  &  &  & XGB & 5.38 & 2.22 & 0.581 & \textbf{5.98} \\
80 & \checkmark & \checkmark & \checkmark &  & OLS & 8.16 & 3.24 & 0.040 & 6.73 \\
 &  &  &  &  & Lasso & 8.14 & 3.10 & 0.042 & 6.60 \\
 &  &  &  &  & Ridge & 7.95 & 3.24 & 0.086 & 6.70 \\
 &  &  &  &  & ENet & 7.97 & 3.22 & 0.082 & 6.68 \\
\midrule
 &  &  &  &  & LGBM & \textbf{4.94} & 2.17 & \textbf{0.649} & 5.97 \\
 &  &  &  &  & RF & 6.13 & 2.33 & 0.458 & 6.57 \\
 &  &  &  &  & XGB & 5.11 & \textbf{2.11} & 0.622 & \textbf{5.83} \\
144 & \checkmark & \checkmark & \checkmark & \checkmark & OLS & 7.90 & 3.40 & 0.099 & 6.92 \\
 &  &  &  &  & Lasso & 7.76 & 2.89 & 0.129 & 6.60 \\
 &  &  &  &  & Ridge & 7.05 & 3.10 & 0.284 & 6.79 \\
 &  &  &  &  & ENet & 6.60 & 2.92 & 0.369 & 6.73 \\
\bottomrule
\end{tabular}
\end{table}

\begin{table}[h!]
\centering
\caption{Second-stage DML performance across feature configurations}
\label{tab:secondStageDetails}
\footnotesize
\begin{tabular}{ccccccrrrr}
\toprule
\multicolumn{5}{c}{Feature Sources} & \multirow{2}{*}{Model} & \multirow{2}{*}{RMSE} & \multirow{2}{*}{MAE} & \multirow{2}{*}{$R^2$} & \multirow{2}{*}{CV RMSE} \\
\cmidrule(lr){1-5}
\#Feat & SIT209 & ERA5 & LandFire & AlphaEarth &  &  &  &  &  \\
\midrule
46 & \checkmark &  &  &  & LGBM & \textbf{539} & 380 & \textbf{0.260} & 603 \\
 &  &  &  &  & OLS & 540 & \textbf{380} & 0.257 & \textbf{592} \\
\midrule
56 & \checkmark & \checkmark &  &  & LGBM & \textbf{536} & \textbf{375} & \textbf{0.269} & 601 \\
 &  &  &  &  & OLS & 538 & 378 & 0.263 & \textbf{591} \\
\midrule
80 & \checkmark & \checkmark & \checkmark &  & LGBM & \textbf{533} & \textbf{372} & \textbf{0.277} & 596 \\
 &  &  &  &  & OLS & 535 & 372 & 0.271 & \textbf{588} \\
\midrule
144 & \checkmark & \checkmark & \checkmark & \checkmark & LGBM & 529 & 368 & 0.286 & 594 \\
 &  &  &  &  & OLS & \textbf{529} & \textbf{365} & \textbf{0.288} & \textbf{590} \\
\bottomrule
\end{tabular}
\end{table}

\begin{figure}[t]
    \centering
    \begin{subfigure}[t]{0.48\linewidth}
        \centering
        \includegraphics[width=\linewidth]{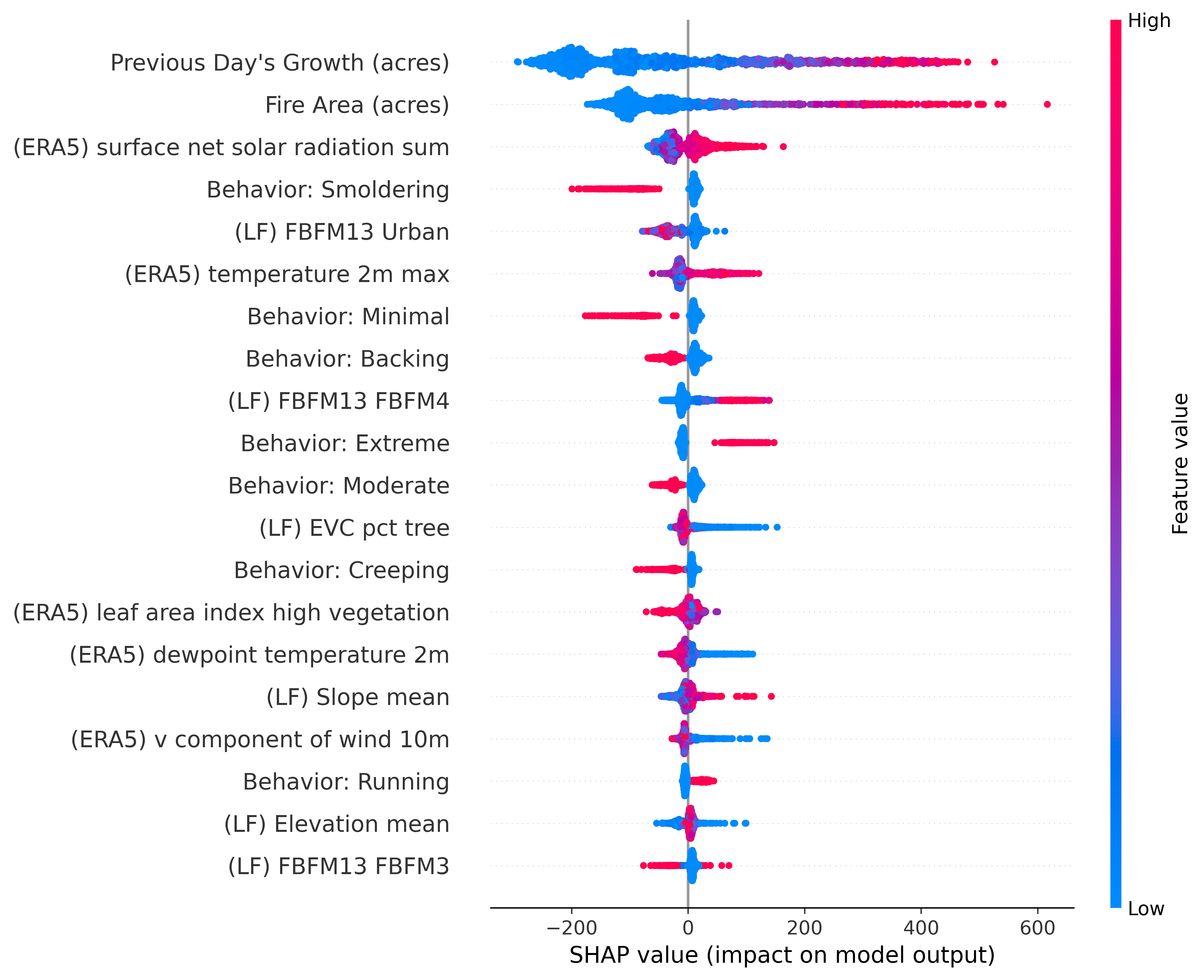}
        \caption{Model predicting outcome variable.}
        \label{fig:SHAPY}
    \end{subfigure}
    \hfill
    \begin{subfigure}[t]{0.48\linewidth}
        \centering
        \includegraphics[width=\linewidth]{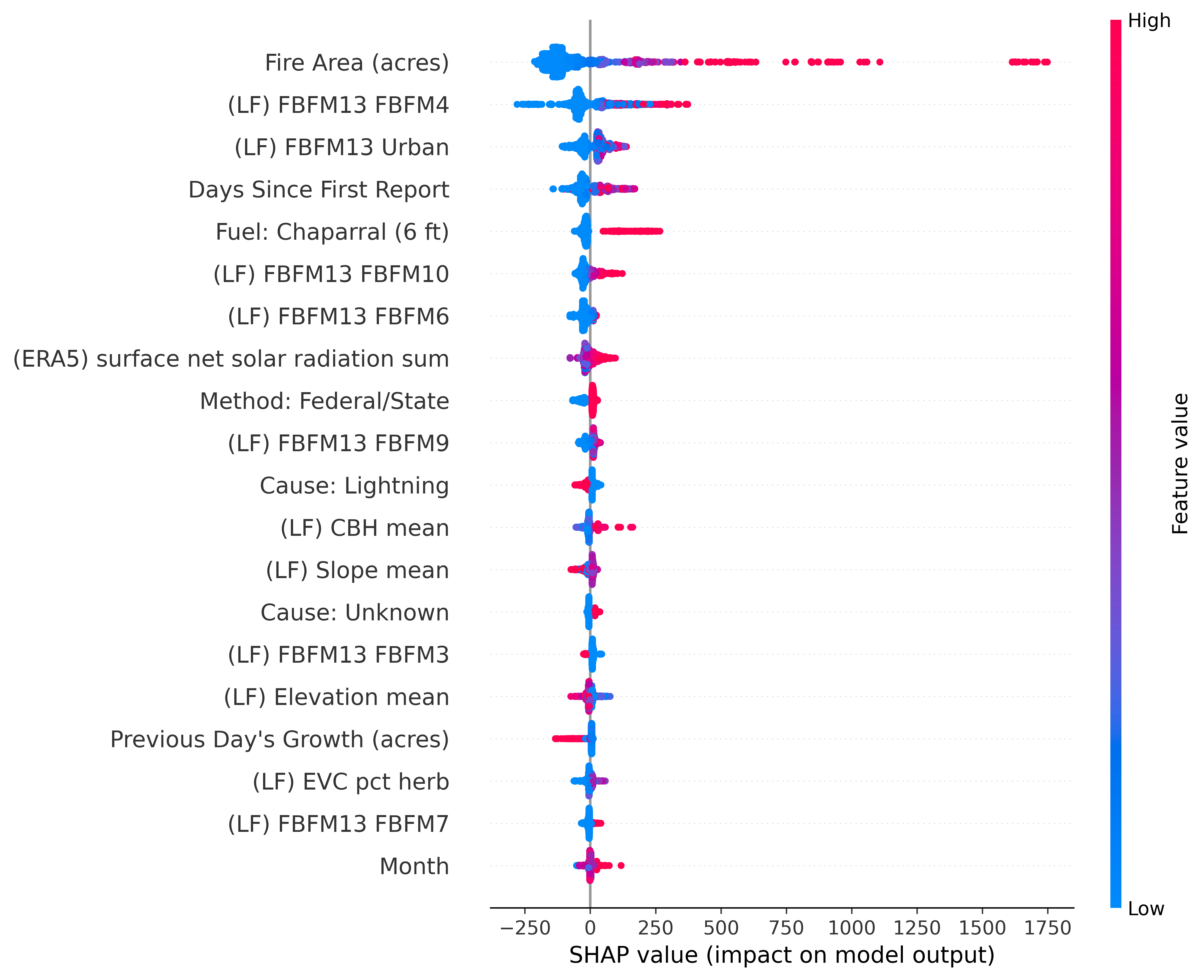}
        \caption{Model predicting treatment variable.}
        \label{fig:SHAPT}
    \end{subfigure}
    \caption{Full SHAP plots for the first-stage models in the DML methodology.}
    \label{fig:SHAP}
\end{figure}

\subsection{Impact of A-GUB cuts}
\label{app:cuts}

We compare the performance of each family of cuts from Section~\ref{section:cutting planes}. To isolate the impact of cutting planes, Figure~\ref{fig:cuts at root node} reports the linear relaxation bounds at the root node of the branching tree. The strengthened GUB cover cuts achieve a comparable bound as the original GUB cover cuts in fewer iterations. This result highlights benefits of the strengthening step but the strengthened GUB cover cuts still do not markedly tighten the linear relaxation over GUB cover cuts. In contrast, the A-GUB cuts from Theorem~\ref{thm:general_gub_aware_cut} result in significant improvements, with a more-than-twofold increase in the lower bound improvement in less-than-half the computational times in the largest instances. These results complement our theoretical results (Proposition~\ref{prop:CGLP superset}) with computational evidence of the benefits of the A-GUB cuts toward tightening the problem's linear relaxation.

\begin{figure}[h!]
    \centering
    \includegraphics[width=\textwidth]{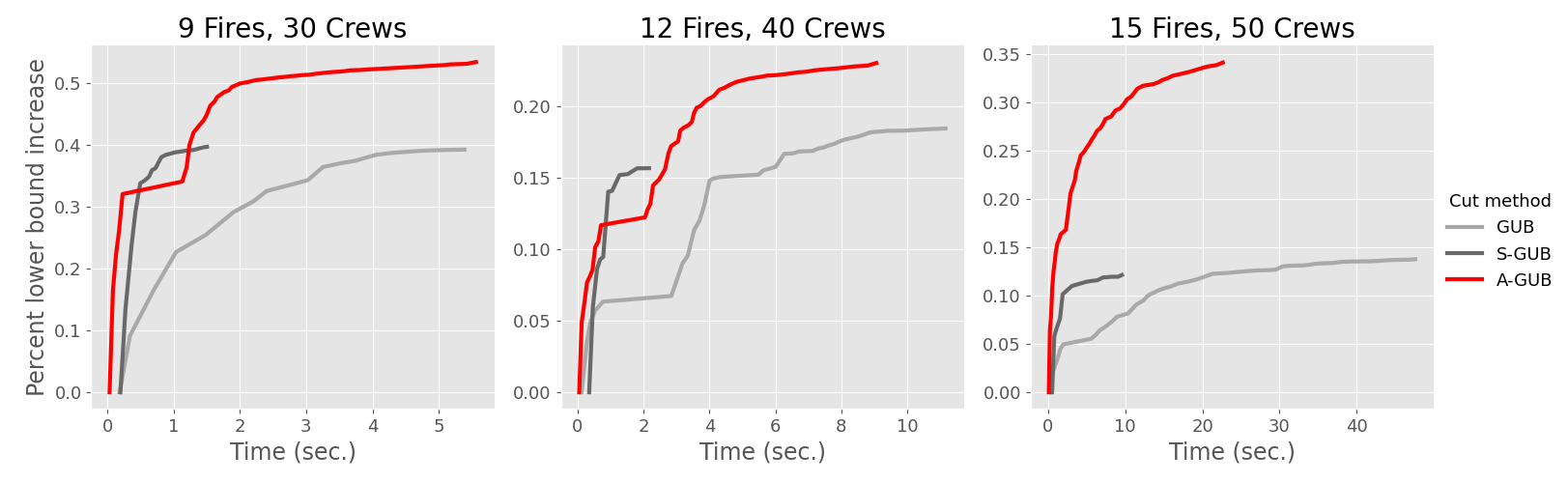}
    \caption{Lower bounds with GUB cover cuts, strengthened ones (S-GUB), and augmented ones (A-GUB).}\label{fig:cuts at root node}
    \vspace{-12pt}
\end{figure}

\subsection{Optimization results}
\label{app:optimizationresults}

\subsubsection*{Benchmarks.} Recall that the optimized solution is compared against four baselines: random assignment, a distance-based baseline, an area-based baseline, and an impact-based baseline. All baselines route crews via minimum-cost maximum-flow principles, subject to the same travel-time and rest constraints as the optimization problem. Only crews currently at base or a fire are eligible for deployment on each day; crews in transit cannot be reassigned. The arcs created by the DML model may require multiple crews to reach the next arc transition. The baselines determine the number of crews for the next arc transition for each fire, score it (based on distance, area or impact), assign crews to the highest-scoring transition among all the fires, then deduct those crews from the available supply and iterate until no crews remain available. Each score is divided by the number of crews required for that arc transition, to determine an ``average'' effect per crew. Then, the score is further divided by the average travel time (in 6-hour work periods) required to route that crew block to the fire, penalizing assignments that consume more travel time.

\subsubsection*{Optimization environment.} Each experiment is conditioned on the set of fires that actually occurred during the corresponding 14-day time window. The optimization model has visibility into fire ignition times and locations, covariate information, and the fire spread model from the DML methodology. As mentioned in the paper, our experiments therefore do not account for uncertainty in fire ignition, fire spread, or future covariate realizations. It can therefore be viewed as a best-case evaluation of what the optimization model can achieve under perfect information.

Our experiments also rely on initial locations for all crews. From the data, we estimate the number of IHC crews at each fire at the start of the planning horizon. Since we do not observe in the data \textit{which} crews (only their \textit{number}) are working on each fire, we simply assume crews are assigned to fires based on the proximity of their bases. Each crew is also randomly assigned a rest deadline between day 3 and day $T-2$. We consider a three-day rest requirement.

Each experiment covers a 14-day time horizon (e.g., June 1 - June 14). Travel times are discretized in 6-hour work-period increments: a crew traveling under 6 hours is assumed to arrive within the same period and is counted for suppression that day; travel between 6 and 12 hours means the crew arrives in the following period; and so on. Crews are dispatched at the start of each day.

\subsubsection*{Additional results.}
Tables~\ref{tab:results_2023} and \ref{tab:results_2024} report the results of each baseline and the optimization for each of the 45 intra-GACC experiments in 2023 and each of the 49 intra-GACC experiments in 2024. Generally, more and larger fires occur in the second half of the Summer and in the Western United States, but these tables also show significant variability in both time and place. As a sanity check, the optimization performs at least as well as each of the baselines for all 94 experiments. Among the baselines, the impact-based one most consistently achieves the lowest burned area, though there are exceptions. Moreover, the benefits come from consistent improvements over many experiments rather than outsized gains from a few instances.

Table~\ref{tab:bpc_results} includes the results of each baseline and the branch-and-price-and-cut algorithm for the 20 inter-GACC experiments across both summer seasons. Again, the optimization solution consistently outperforms all baselines, and the estimated 153,891 acreage improvement over the impact-based baseline is accumulated over consistent gains over time and space.

\begin{table}[ht]
  \centering
  \small
  \setlength{\tabcolsep}{4pt}
  \caption{Cumulative burned area (acres) by solution for six 14-day periods in Summer 2023.}
  \label{tab:results_2023}
  \begin{tabular}{llrrrrrrrr}
  \toprule
  Date & GACC & Crews & Fires & Zero & Random & Area & Distance & Impact & Optimized \\
  \midrule
  06 01 & EA &  1 &  11 & 38,710 & 38,590 & 38,710 & 38,710 & 38,710 & 38,155 \\
  06 01 & GB & 12 &   1 & 4,520 & 3,885 & 4,105 & 4,105 & 4,105 & 3,875 \\
  06 01 & NW & 13 &  12 & 49,055 & 47,346 & 47,600 & 47,334 & 47,349 & 46,037 \\
  06 01 & SA &  4 &   9 & 69,735 & 69,710 & 69,720 & 69,725 & 69,665 & 68,935 \\
  06 01 & SC & 27 &   1 & 4,775 & 4,510 & 4,510 & 4,510 & 4,510 & 4,510 \\
  06 01 & SW & 20 &   8 & 72,275 & 71,115 & 71,025 & 70,875 & 70,635 & 70,215 \\
  \midrule
  06 15 & EA &  1 &   4 & 15,275 & 15,270 & 15,275 & 15,275 & 15,275 & 14,805 \\
  06 15 & GB & 12 &   5 & 27,690 & 27,030 & 26,405 & 26,380 & 26,390 & 26,230 \\
  06 15 & NW & 13 &  10 & 83,265 & 82,460 & 81,215 & 81,230 & 80,305 & 78,880 \\
  06 15 & RM &  7 &   5 & 4,745 & 4,625 & 4,600 & 4,595 & 4,595 & 4,595 \\
  06 15 & SA &  4 &  10 & 43,375 & 43,172 & 43,178 & 43,108 & 43,143 & 42,213 \\
  06 15 & SC & 27 &   3 & 17,295 & 15,440 & 15,200 & 15,320 & 15,355 & 15,130 \\
  06 15 & SW & 20 &  17 & 158,775 & 157,435 & 156,895 & 156,400 & 156,245 & 155,315 \\
  \midrule
  07 01 & EA &  1 &   4 & 5,235 & 5,150 & 5,155 & 5,155 & 5,155 & 4,930 \\
  07 01 & GB & 12 &   6 & 10,635 & 9,915 & 9,708 & 9,723 & 9,713 & 9,626 \\
  07 01 & NR &  7 &   5 & 11,425 & 10,515 & 10,790 & 10,615 & 10,595 & 9,780 \\
  07 01 & NW & 13 &  11 & 30,567 & 28,324 & 27,920 & 27,684 & 27,955 & 27,422 \\
  07 01 & RM &  7 &   6 & 22,035 & 19,875 & 19,840 & 19,775 & 20,155 & 19,350 \\
  07 01 & SA &  4 &   9 & 40,520 & 40,260 & 40,315 & 40,285 & 40,485 & 39,420 \\
  07 01 & SC & 27 &   6 & 21,985 & 20,595 & 20,540 & 20,540 & 20,500 & 12,445 \\
  07 01 & SW & 20 &  24 & 186,755 & 185,680 & 185,875 & 184,755 & 184,490 & 181,775 \\
  \midrule
  07 15 & GB & 12 &  10 & 27,505 & 27,055 & 26,925 & 26,930 & 26,905 & 26,530 \\
  07 15 & NC & 21 &   1 & 2,485 & 1,880 & 1,885 & 1,885 & 1,885 & 1,880 \\
  07 15 & NR &  7 &  12 & 32,055 & 30,834 & 30,779 & 30,462 & 30,331 & 29,498 \\
  07 15 & NW & 13 &  15 & 101,585 & 99,900 & 99,310 & 98,880 & 99,120 & 96,375 \\
  07 15 & RM &  7 &   8 & 22,842 & 21,652 & 21,602 & 21,437 & 21,372 & 21,252 \\
  07 15 & SA &  4 &  11 & 42,265 & 42,230 & 42,220 & 42,255 & 42,240 & 41,375 \\
  07 15 & SC & 27 &  10 & 37,120 & 35,151 & 33,980 & 33,061 & 33,507 & 32,421 \\
  07 15 & SW & 20 &  50 & 344,585 & 342,160 & 341,825 & 341,265 & 339,615 & 334,405 \\
  \midrule
  08 01 & GB & 12 &   9 & 70,184 & 69,285 & 69,500 & 69,066 & 69,395 & 68,350 \\
  08 01 & NR &  7 &  31 & 109,393 & 108,016 & 108,388 & 106,446 & 105,178 & 100,931 \\
  08 01 & NW & 13 &  21 & 149,965 & 149,350 & 148,395 & 147,850 & 147,845 & 145,235 \\
  08 01 & RM &  7 &   8 & 27,775 & 26,906 & 26,571 & 26,466 & 26,551 & 26,001 \\
  08 01 & SA &  4 &  48 & 147,755 & 147,700 & 147,390 & 146,865 & 147,035 & 144,580 \\
  08 01 & SC & 27 &   8 & 148,420 & 147,230 & 146,950 & 147,030 & 146,705 & 141,535 \\
  08 01 & SW & 20 &  40 & 305,340 & 304,055 & 303,940 & 302,715 & 301,790 & 291,640 \\
  \midrule
  08 15 & EA &  1 &   2 & 5,835 & 5,830 & 5,820 & 5,820 & 5,820 & 5,810 \\
  08 15 & GB & 12 &  15 & 104,531 & 103,755 & 103,332 & 103,157 & 103,225 & 101,398 \\
  08 15 & NC & 21 &   7 & 50,710 & 49,405 & 48,950 & 49,020 & 48,860 & 47,245 \\
  08 15 & NR &  7 &  30 & 118,349 & 117,215 & 115,229 & 114,591 & 112,322 & 110,137 \\
  08 15 & NW & 13 &  22 & 166,556 & 164,517 & 165,216 & 163,823 & 163,706 & 162,296 \\
  08 15 & RM &  7 &  11 & 45,445 & 44,535 & 44,090 & 44,130 & 44,075 & 42,580 \\
  08 15 & SA &  4 &  62 & 200,347 & 200,272 & 199,352 & 199,384 & 199,117 & 195,302 \\
  08 15 & SC & 27 &   3 & 116,110 & 115,060 & 114,775 & 114,775 & 114,775 & 114,165 \\
  08 15 & SW & 20 &  16 & 138,425 & 136,765 & 137,330 & 136,245 & 135,905 & 131,460 \\
  \bottomrule
  \end{tabular}
\end{table}

\begin{table}[ht]
  \centering
  \small
  \setlength{\tabcolsep}{4pt}
  \caption{Cumulative burned area (acres) by solution for six 14-day periods in Summer 2024.}
  \label{tab:results_2024}
  \begin{tabular}{llrrrrrrrr}
  \toprule
  Date & GACC & Crews & Fires & Zero & Random & Area & Distance & Impact & Optimized \\
  \midrule
  06 01 & GB & 12 &   4 & 9,710 & 9,095 & 9,085 & 8,995 & 9,060 & 8,940 \\
  06 01 & NC & 21 &   1 & 24,090 & 23,120 & 23,170 & 23,170 & 23,170 & 23,000 \\
  06 01 & NW & 13 &   6 & 28,610 & 27,780 & 27,655 & 27,670 & 27,680 & 27,335 \\
  06 01 & RM &  7 &   3 & 5,980 & 5,830 & 5,815 & 5,815 & 5,815 & 5,620 \\
  06 01 & SA &  4 &  14 & 78,980 & 78,775 & 78,920 & 78,810 & 79,055 & 77,770 \\
  06 01 & SC & 27 &   5 & 32,055 & 30,730 & 30,225 & 30,300 & 30,235 & 29,110 \\
  06 01 & SW & 20 &  17 & 135,200 & 134,215 & 134,010 & 133,400 & 133,115 & 131,495 \\
  \midrule
  06 15 & GB & 12 &  10 & 53,235 & 52,261 & 52,265 & 51,267 & 51,297 & 51,042 \\
  06 15 & NC & 21 &   5 & 48,175 & 46,785 & 46,045 & 46,235 & 45,700 & 45,280 \\
  06 15 & NR &  7 &   1 & 2,745 & 2,570 & 2,570 & 2,570 & 2,570 & 2,515 \\
  06 15 & NW & 13 &  14 & 87,215 & 85,615 & 85,805 & 85,190 & 84,710 & 84,685 \\
  06 15 & RM &  7 &   8 & 44,885 & 43,760 & 43,485 & 43,240 & 42,870 & 41,845 \\
  06 15 & SA &  4 &  14 & 48,130 & 48,000 & 48,080 & 47,970 & 47,820 & 46,480 \\
  06 15 & SC & 27 &  10 & 84,240 & 83,110 & 82,410 & 80,300 & 80,405 & 80,135 \\
  06 15 & SW & 20 &  13 & 122,995 & 121,180 & 120,885 & 120,755 & 120,550 & 118,635 \\
  \midrule
  07 01 & EA &  1 &   1 & 6,770 & 6,760 & 6,765 & 6,765 & 6,765 & 6,745 \\
  07 01 & GB & 12 &  18 & 119,160 & 117,315 & 117,435 & 116,510 & 116,090 & 115,080 \\
  07 01 & NC & 21 &   7 & 44,185 & 42,395 & 42,130 & 41,710 & 41,690 & 41,115 \\
  07 01 & NR &  7 &  11 & 42,475 & 41,965 & 41,785 & 41,860 & 41,830 & 41,460 \\
  07 01 & NW & 13 &  25 & 196,380 & 195,494 & 195,000 & 194,810 & 193,554 & 191,170 \\
  07 01 & RM &  7 &   6 & 20,270 & 19,074 & 18,999 & 19,009 & 18,999 & 18,944 \\
  07 01 & SA &  4 &   9 & 35,905 & 35,850 & 35,875 & 35,870 & 35,870 & 34,800 \\
  07 01 & SC & 27 &  17 & 131,820 & 129,215 & 128,190 & 127,130 & 126,980 & 125,875 \\
  07 01 & SW & 20 &  22 & 118,100 & 115,975 & 114,965 & 115,210 & 115,400 & 110,820 \\
  \midrule
  07 15 & EA &  1 &   1 & 10,290 & 9,995 & 10,010 & 10,010 & 10,010 & 9,995 \\
  07 15 & GB & 12 &  41 & 173,265 & 172,780 & 172,470 & 171,085 & 169,809 & 168,224 \\
  07 15 & NC & 21 &  12 & 80,685 & 78,870 & 78,510 & 77,535 & 77,815 & 73,100 \\
  07 15 & NR &  7 &  31 & 180,815 & 180,275 & 180,415 & 178,035 & 179,580 & 176,150 \\
  07 15 & NW & 13 &  69 & 755,047 & 753,992 & 754,442 & 751,528 & 750,033 & 746,485 \\
  07 15 & RM &  7 &  12 & 70,440 & 69,215 & 68,885 & 68,810 & 68,900 & 66,705 \\
  07 15 & SA &  4 &   6 & 17,950 & 17,845 & 17,850 & 17,835 & 17,930 & 16,850 \\
  07 15 & SC & 27 &  16 & 189,060 & 186,900 & 186,445 & 185,595 & 185,360 & 182,245 \\
  07 15 & SW & 20 &  32 & 265,965 & 264,800 & 264,715 & 264,470 & 263,360 & 261,035 \\
  \midrule
  08 01 & GB & 12 &  43 & 279,662 & 278,591 & 278,362 & 277,422 & 276,066 & 273,421 \\
  08 01 & NC & 21 &   7 & 467,250 & 466,845 & 466,415 & 465,815 & 466,035 & 464,805 \\
  08 01 & NR &  7 &  38 & 143,070 & 142,563 & 142,610 & 141,267 & 140,972 & 137,911 \\
  08 01 & NW & 13 &  50 & 1,450,725 & 1,449,550 & 1,449,505 & 1,448,255 & 1,447,680 & 1,441,065 \\
  08 01 & RM &  7 &  13 & 121,650 & 120,890 & 120,785 & 120,390 & 120,225 & 119,415 \\
  08 01 & SA &  4 &  17 & 59,105 & 59,045 & 59,105 & 59,040 & 59,055 & 57,795 \\
  08 01 & SC & 27 &  18 & 238,527 & 237,332 & 236,537 & 235,602 & 234,782 & 232,807 \\
  08 01 & SW & 20 &  14 & 150,345 & 149,135 & 148,975 & 148,570 & 148,205 & 145,985 \\
  \midrule
  08 15 & GB & 12 &  37 & 439,021 & 435,981 & 438,281 & 436,691 & 435,321 & 431,216 \\
  08 15 & NC & 21 &   3 & 463,495 & 462,905 & 462,635 & 462,695 & 462,635 & 462,140 \\
  08 15 & NR &  7 &  38 & 96,324 & 92,262 & 94,374 & 90,957 & 90,256 & 84,246 \\
  08 15 & NW & 13 &  34 & 1,107,870 & 1,106,710 & 1,107,225 & 1,105,030 & 1,104,825 & 1,102,515 \\
  08 15 & RM &  7 &  10 & 241,055 & 240,780 & 240,525 & 240,600 & 240,500 & 237,880 \\
  08 15 & SA &  4 &  26 & 70,455 & 70,440 & 70,375 & 70,135 & 70,170 & 68,685 \\
  08 15 & SC & 27 &  10 & 136,265 & 135,057 & 134,745 & 133,527 & 133,777 & 130,806 \\
  08 15 & SW & 20 &  14 & 155,505 & 152,815 & 154,375 & 152,205 & 152,105 & 149,850 \\
  \bottomrule
  \end{tabular}
\end{table}

\begin{table}[ht]
  \centering
  \small
  \caption{Cumulative burned area (acres) with inter-GACC coordination in Summer 2023 and 2024.}
  \label{tab:bpc_results}
  \setlength{\tabcolsep}{4pt}
  \resizebox{\linewidth}{!}{
  \begin{tabular}{llrrrrrrrr}
  \toprule
  Date & GACC Group & Crews & Fires & Zero & Random & Area & Distance & Impact & Optimized \\
  \midrule
  \multicolumn{10}{l}{\textit{2023}} \\
  06 01 & All$^{a}$                      & 112 &  42 & 239,070 & 230,511 & 232,045 & 230,980 & 229,067 & 226,139 \\
  06 15 & All$^{a}$                      & 112 &  54 & 350,420 & 341,608 & 338,041 & 337,013 & 335,978 & 331,818 \\
  07 01 & East                     &  41 &  16 & 41,992 & 37,894 & 36,680 & 36,624 & 37,255 & 34,385 \\
  07 01 & West         &  71 &  55 & 287,165 & 280,822 & 280,655 & 277,895 & 277,395 & 262,927 \\
  07 15 & East                     &  41 &  28 & 136,125 & 131,067 & 130,740 & 128,882 & 128,223 & 125,283 \\
  07 15 & West         &  71 &  89 & 474,317 & 466,678 & 465,837 & 462,702 & 460,913 & 456,968 \\
  08 01 & East                     &  41 &  52 & 259,358 & 253,358 & 255,825 & 252,756 & 247,653 & 243,673 \\
  08 01 & West         &  71 & 113 & 699,474 & 695,370 & 694,464 & 690,626 & 688,286 & 666,720 \\
  08 15 & East                     &  41 &  59 & 335,615 & 329,533 & 330,090 & 326,617 & 323,118 & 318,896 \\
  08 15 & West         &  71 & 109 & 610,693 & 606,019 & 604,293 & 600,080 & 599,524 & 583,305 \\
  \midrule
  \multicolumn{10}{l}{\textit{2024}} \\
  06 01 & All$^{a}$                      & 112 &  50 & 314,625 & 307,925 & 305,500 & 305,095 & 304,050 & 300,220 \\
  06 15 & All$^{a}$                      & 112 &  75 & 491,620 & 480,932 & 479,690 & 472,662 & 474,287 & 469,067 \\
  07 01 & East                     &  41 &  43 & 283,040 & 279,421 & 277,530 & 276,975 & 275,109 & 273,095 \\
  07 01 & West         &  71 &  73 & 432,025 & 424,254 & 424,755 & 419,909 & 419,134 & 412,056 \\
  07 15 & East                     &  41 & 112 & 1,016,547 & 1,013,455 & 1,012,962 & 1,006,956 & 1,005,523 & 996,050 \\
  07 15 & West         &  71 & 108 & 726,970 & 718,502 & 721,565 & 718,000 & 712,809 & 705,004 \\
  08 01 & East                     &  41 &  95 & 2,061,045 & 2,058,136 & 2,057,780 & 2,052,008 & 2,052,322 & 2,042,372 \\
  08 01 & West         &  71 & 105 & 849,289 & 843,557 & 843,289 & 840,134 & 837,382 & 829,571 \\
  08 15 & East                     &  41 &  75 & 1,667,689 & 1,659,755 & 1,666,114 & 1,656,816 & 1,655,619 & 1,645,018 \\
  08 15 & West         &  71 &  97 & 1,042,301 & 1,034,330 & 1,037,551 & 1,032,363 & 1,029,533 & 1,016,722 \\
  \bottomrule
  \end{tabular}
  }
\end{table}

\end{APPENDICES}
    
\end{document}